\date{\today}
\newcommand{\bbN}{{\mathbb{N}}}
\newcommand{\bbR}{{\mathbb{R}}}
\newcommand{\bbZ}{{\mathbb{Z}}}
\newcommand{\bbC}{{\mathbb{C}}}
\newcommand{\cB}{{\mathcal B}}
\newcommand{\cD}{{\mathcal D}}
\newcommand{\cE}{{\mathcal E}}
\newcommand{\cH}{{\mathcal H}}
\newcommand{\cK}{{\mathcal K}}
\newcommand{\cN}{{\mathcal N}}
\newcommand{\cR}{{\mathcal R}}
\newcommand{\cS}{{\mathcal S}}
\newcommand{\cX}{{\mathcal X}}
\newcommand{\no}{\notag}
\newcommand{\lb}{\label}
\newcommand{\f}{\frac}
\newcommand{\ol}{\overline}
\newcommand{\ti}{\tilde}
\newcommand{\wti}{\widetilde}
\newcommand{\Oh}{O}
\newcommand{\loc}{\text{\rm{loc}}}
\newcommand{\ran}{\text{\rm{ran}}}
\newcommand{\dom}{\text{\rm{dom}}}
\newcommand{\dist}{\text{\rm{dist}}}
\newcommand{\bi}{\bibitem}
\newcommand{\hatt}{\widehat}
\newcommand{\beq}{\begin{equation}}
\newcommand{\eeq}{\end{equation}}
\newcommand{\ba}{\begin{align}}
\newcommand{\ea}{\end{align}}
\newcommand{\tr}{\text{\rm{tr}}}
\newcommand{\abs}[1]{\lvert#1\rvert}
\renewcommand{\Im}{\text{\rm Im}}
\renewcommand{\ln}{\text{\rm ln}}
\DeclareMathOperator{\sgn}{sgn}
\DeclareMathOperator*{\slim}{s-lim}
\newcommand{\norm}[1]{\left\Vert#1\right\Vert}
\newcommand{\ha}[1]{\widehat{#1}}
\newcommand{\Om}{\Omega}
\newcommand{\dOm}{{\partial\Omega}}
\newcommand{\si}{\sigma}
\newcommand{\la}{\lambda}
\newcommand{\ga}{\gamma}
\newcommand{\eps}{\varepsilon}
\newcommand{\LOm}{L^2(\Om;d^nx)}
\newcommand{\LdOm}{L^2(\dOm;d^{n-1}\si)}
\allowdisplaybreaks \numberwithin{equation}{section}
\newtheorem{theorem}{Theorem}[section]
\newtheorem{lemma}[theorem]{Lemma}
\newtheorem{corollary}[theorem]{Corollary}
\newtheorem{hypothesis}[theorem]{Hypothesis}
\theoremstyle{definition}
\newtheorem{remark}[theorem]{Remark}
\begin{document}

\title[Variations on a Theme of Jost and Pais]{Variations on a Theme of Jost and Pais}
\author[F.\ Gesztesy, M.\ Mitrea and M.\ Zinchenko]{Fritz Gesztesy, Marius Mitrea, and Maxim Zinchenko}
\address{Department of Mathematics,
University of Missouri, Columbia, MO 65211, USA}
\email{fritz@math.missouri.edu}
\urladdr{http://www.math.missouri.edu/personnel/faculty/gesztesyf.html}
\address{Department of Mathematics, University of
Missouri, Columbia, MO 65211, USA}
\email{marius@math.missouri.edu}
\urladdr{http://www.math.missouri.edu/personnel/faculty/mitream.html}
\address{Department of Mathematics,
California Institute of Technology, Pasadena, CA 91125, USA}
\email{maxim@caltech.edu}
\urladdr{http://math.caltech.edu/$\sim$maxim}
\thanks{Based upon work partially supported by the US National Science
Foundation under Grant Nos.\ DMS-0405526 and DMS-0400639,
FRG-0456306.}
\thanks{To appear in {\it J. Funct. Anal.}}
\date{\today}
\subjclass[2000]{Primary: 47B10, 47G10, Secondary: 34B27, 34L40.}
\keywords{Fredholm determinants, non-self-adjoint operators, multi-dimensional 
Schr\"odinger operators, Dirichlet-to-Neumann maps.}

%%%%%%%%%%%%%%%%%%%%%%%%%%%%%%%%%%%%%%%%
%%%%%%%%%%%%%%%%%%%%%%%%%%%%%%%%%%%%%%%%
\begin{abstract}
We explore the extent to which a variant of a celebrated formula due to 
Jost and Pais, which reduces the Fredholm perturbation
determinant associated with the Schr\"odinger operator on a half-line
to a simple Wronski determinant of appropriate distributional solutions
of the underlying Schr\"odinger equation, generalizes to higher dimensions.
In this multi-dimensional extension the
half-line is replaced by an open set $\Om\subset\bbR^n$,
$n\in\bbN$, $n\geq 2$, where $\Omega$ has a compact, nonempty
boundary $\partial\Om$ satisfying certain regularity conditions. Our
variant involves ratios of perturbation
determinants corresponding to Dirichlet and Neumann boundary
conditions on $\partial\Om$ and invokes the corresponding
Dirichlet-to-Neumann map. As a result, we succeed in reducing a certain ratio
of modified Fredholm perturbation determinants associated with operators in
$L^2(\Om; d^n x)$, $n\in\bbN$, to modified Fredholm determinants associated with operators in $L^2(\partial\Om; d^{n-1}\sigma)$, $n\geq 2$. 

Applications involving the Birman--Schwinger principle and eigenvalue counting functions are discussed.
\end{abstract}
%%%%%%%%%%%%%%%%%%%%%%%%%%%%%%%%%%%%%%%%%
%%%%%%%%%%%%%%%%%%%%%%%%%%%%%%%%%%%%%%%%%

\maketitle

%%%%%%%%%%%%%%%%%%%%%%%%%%%%%%%%%%%%%%%%%
%%%%%%%%%%%%%%%%%%%%%%%%%%%%%%%%%%%%%%%%%
\section{Introduction}\label{s1}
%%%%%%%%%%%%%%%%%%%%%%%%%%%%%%%%%%%%%%%%%
%%%%%%%%%%%%%%%%%%%%%%%%%%%%%%%%%%%%%%%%%

To illustrate the reason behind the title of this paper, we briefly recall a
celebrated result of Jost and Pais \cite{JP51}, who proved  in 1951 a spectacular
reduction of the Fredholm determinant associated with the Birman--Schwinger kernel of a one-dimensional Schr\"odinger operator on a half-line, to a  simple Wronski determinant of distributional solutions of the underlying
Schr\"odinger equation. This Wronski determinant also equals the so-called Jost function of the corresponding half-line Schr\"odinger operator. In this paper we prove a certain multi-dimensional variant of this result.

To describe the result due to Jost and Pais \cite{JP51}, we need a few preparations.
Denoting by $H_{0,+}^D$ and $H_{0,+}^N$ the one-dimensional Dirichlet and Neumann Laplacians  in $L^2((0,\infty);dx)$, and assuming
\begin{equation}
V\in L^1((0,\infty);dx),   \lb{1.1}
\end{equation}
we introduce the perturbed Schr\"odinger operators
$H_{+}^D$ and $H_{+}^N$ in $L^2((0,\infty);dx)$ by
\begin{align}
&H_{+}^Df=-f''+Vf,  \no \\
&f\in \dom\big(H_{+}^D\big)=\{g\in L^2((0,\infty); dx) \,|\, g,g'
\in AC([0,R])
\text{ for all $R>0$}, \\
& \hspace*{4.95cm} g(0)=0, \, (-g''+Vg)\in L^2((0,\infty); dx)\}, \no \\
&H_{+}^Nf=-f''+Vf,  \no \\
&f\in \dom\big(H_{+}^N\big)=\{g\in L^2((0,\infty); dx) \,|\, g,g'
\in AC([0,R])
\text{ for all $R>0$}, \\
& \hspace*{4.85cm} g'(0)=0, \, (-g''+Vg)\in L^2((0,\infty); dx)\}. \no
\end{align}
Thus, $H_{+}^D$ and $H_{+}^N$ are self-adjoint if and only if $V$ is
real-valued, but since the latter restriction plays no special role in our results, we
will not assume real-valuedness of $V$ throughout this paper.

A fundamental system of solutions $\phi_+^D(z,\cdot)$,
$\theta_+^D(z,\cdot)$, and the Jost solution $f_+(z,\cdot)$ of
\begin{equation}
-\psi''(z,x)+V\psi(z,x)=z\psi(z,x), \quad z\in\bbC\backslash\{0\}, \;
x\geq 0,   \lb{1.4}
\end{equation}
are then introduced via the standard Volterra integral equations
\begin{align}
\phi_+^D(z,x)&=z^{-1/2}\sin(z^{1/2}x)+\int_0^x dx' \, z^{-1/2}\sin(z^{1/2}(x-x'))
V(x')\phi_+^D(z,x'), \\
\theta_+^D(z,x)&=\cos(z^{1/2}x)+\int_0^x dx' \, z^{-1/2}\sin(z^{1/2}(x-x'))
V(x')\theta_+^D (z,x'), \\
f_+(z,x)&=e^{iz^{1/2}x}-\int_x^\infty dx' \,
z^{-1/2}\sin(z^{1/2}(x-x')) V(x')f_+(z,x'),  \lb{1.7} \\
&\hspace*{3.85cm}  z\in\bbC\backslash\{0\}, \; \Im(z^{1/2})\geq 0, \;
x\geq 0.  \no
\end{align}

In addition, we introduce
\begin{equation}
u=\exp(i\arg(V))\abs{V}^{1/2}, \quad v=\abs{V}^{1/2}, \, \text{ so
that } \, V=u\, v,
\end{equation}
and denote by $I_+$ the identity operator in $L^2((0,\infty); dx)$. Moreover,
we denote by
\begin{equation}
W(f,g)(x)=f(x)g'(x)-f'(x)g(x), \quad x \geq 0,
\end{equation}
the Wronskian of $f$ and $g$, where $f,g \in C^1([0,\infty))$. We
also use the standard convention to abbreviate (with a slight abuse of notation)
the operator of multiplication in $L^2((0,\infty);dx)$ by an element $f\in
L^1_{\loc}((0,\infty);dx)$ (and similarly in the higher-dimensional
context later) by the same symbol $f$ (rather than $M_f$, etc.). For additional
notational conventions we refer to the paragraph at the end of this introduction.

Then, the following results hold:

%%%%%%%%%%%%%%%%%%%%%%%%%%%%%%%%%%%%%
\begin{theorem} \lb{t1.1}
Assume $V\in L^1((0,\infty);dx)$ and let $z\in\bbC\backslash [0,\infty)$
with $\Im(z^{1/2})>0$. Then,
\begin{equation}
\ol{u\big(H_{0,+}^D-z I_+\big)^{-1}v}, \,
\ol{u\big(H_{0,+}^N-z I_+\big)^{-1}v} \in \cB_1(L^2((0,\infty);dx))
\end{equation}
and
\begin{align}
\det\Big(I_+ +\ol{u\big(H_{0, +}^D-z I_+\big)^{-1}v}\,\Big) &=
1+z^{-1/2}\int_0^\infty dx\, \sin(z^{1/2}x)V(x)f_+(z,x)   \no \\
&= W(f_+(z,\cdot),\phi_+^D(z,\cdot)) = f_+(z,0),    \lb{1.11}  \\
\det\Big(I_+ +\ol{u\big(H_{0, +}^N-z I_+\big)^{-1}v}\,\Big)
&= 1+ i z^{-1/2} \int_0^\infty
dx\, \cos(z^{1/2}x)V(x)f_+(z,x) \no  \\
&= - \frac{W(f_+(z,\cdot),\theta_+^D (z,\cdot))}{i z^{1/2}} =
\frac{f_+'(z,0)}{i z^{1/2}}.    \lb{1.12}
\end{align}
\end{theorem}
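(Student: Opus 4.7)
My plan is to attack the chains of equalities (1.11)--(1.12) from right to left. The Wronskian identity $W(f_+(z,\cdot),\phi_+^D(z,\cdot))=f_+(z,0)$ is immediate: since both $f_+$ and $\phi_+^D$ solve (1.4), their Wronskian is $x$-independent, and evaluating at $x=0$ with $\phi_+^D(z,0)=0$ and $(\phi_+^D)'(z,0)=1$ produces $f_+(z,0)$. The integral identity on the top line of (1.11) is then obtained by setting $x=0$ in the Volterra equation (1.7) and using $\sin(-w)=-\sin(w)$ to flip signs. The corresponding identities in (1.12) follow by differentiating (1.7) in $x$ (the boundary term from the moving lower limit vanishes because $\sin(z^{1/2}\cdot 0)=0$), setting $x=0$, and applying $\theta_+^D(z,0)=1$, $(\theta_+^D)'(z,0)=0$ to get $W(f_+,\theta_+^D)(0)=-f_+'(z,0)$.

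The principal assertion is the identification of the Fredholm determinant with the integral. For the trace-class claim I use the explicit kernels
\begin{align*}
G_{0,+}^D(z,x,x')&=z^{-1/2}\sin(z^{1/2}x_<)\,e^{iz^{1/2}x_>},\\
G_{0,+}^N(z,x,x')&=iz^{-1/2}\cos(z^{1/2}x_<)\,e^{iz^{1/2}x_>},
\end{align*}
split each kernel into its two triangular pieces on $\{x\le x'\}$ and $\{x\ge x'\}$, and factor each piece as a product of Hilbert--Schmidt operators; the Hilbert--Schmidt norm estimates combine the exponential decay $|e^{iz^{1/2}x}|=e^{-\Im(z^{1/2})x}$ with the assumption $V\in L^1((0,\infty);dx)$. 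For the evaluation of the determinant I expand via the Fredholm series
\begin{equation*}
\det\bigl(I_++\ol{u(H_{0,+}^D-zI_+)^{-1}v}\bigr)=\sum_{n=0}^\infty\frac{1}{n!}\int_{(0,\infty)^n}\prod_{i=1}^n V(x_i)\,\det\bigl[G_{0,+}^D(z,x_i,x_j)\bigr]_{i,j=1}^n d^nx,
\end{equation*}
restrict to the ordered simplex $x_1<\cdots<x_n$ (cancelling $1/n!$), and exploit the rank-one-per-triangle structure $G_{0,+}^D(z,x_i,x_j)=z^{-1/2}\sin(z^{1/2}x_{\min(i,j)})e^{iz^{1/2}x_{\max(i,j)}}$. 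Row reduction then telescopes $\det[G_{0,+}^D(z,x_i,x_j)]_{i,j=1}^n$ into a product of discrete Wronskian-type differences; these are precisely the iterates of the Volterra kernel appearing in (1.7), so summing the series recovers $1+z^{-1/2}\int_0^\infty\sin(z^{1/2}x)V(x)f_+(z,x)\,dx$. The Neumann case is parallel, with $G_{0,+}^N$ substituted for $G_{0,+}^D$, producing the integral with $\cos(z^{1/2}x)$ after division by $iz^{1/2}$.

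The main technical obstacle is this resummation, namely matching the combinatorial Fredholm expansion with the iterative Volterra series for $f_+$. As a cross-check (and perhaps a cleaner alternative), I would employ the cyclicity identity $\det(I_++uAv)=\det(I_++vAu)$, which becomes valid once the trace-class property is established, to recast $\det(I_++\ol{u(H_{0,+}^D-zI_+)^{-1}v})$ as the determinant of $I_++(H_{0,+}^D-zI_+)^{-1}V$ on a suitable space; a standard perturbation-determinant argument comparing the perturbed Jost solution with the free one (whose Jost function $e^{iz^{1/2}\cdot}$ equals $1$ at the origin) then identifies this quantity directly with $f_+(z,0)$, bypassing the combinatorial bookkeeping entirely.
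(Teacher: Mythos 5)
The paper contains no proof of Theorem~\ref{t1.1}: \eqref{1.11} is quoted as the classical Jost--Pais result with its modern form referenced to~\cite{GM03}, and \eqref{1.12} is said to follow by parallel calculations. So the relevant comparison is with~\cite{GM03}, which proceeds through the general theory of semi-separable integral kernels, expressing the Fredholm determinant via an associated first-order transfer system. Your main route---direct Fredholm expansion plus the single-pair determinant identity $\det\big[a_{\min(i,j)}b_{\max(i,j)}\big]_{i,j=1}^{n}=a_1 b_n\prod_{i=1}^{n-1}\big(a_{i+1}b_i-a_i b_{i+1}\big)$ on the ordered simplex---is genuinely different and more elementary, and it does close: the elementary trigonometric identity $\sin(z^{1/2}x_{i+1})e^{iz^{1/2}x_i}-\sin(z^{1/2}x_i)e^{iz^{1/2}x_{i+1}}=\sin\!\big(z^{1/2}(x_{i+1}-x_i)\big)$ (and the analogous computation with $\cos$ in place of $\sin$ producing the \emph{same} bracket) shows that the $n$-th simplex integrand coincides exactly with the $n$-th iterate of the Volterra kernel from \eqref{1.7}, so the series reassembles into $1+z^{-1/2}\int_0^\infty\sin(z^{1/2}x)V(x)f_+(z,x)\,dx$ (Dirichlet) and its $iz^{-1/2}\cos$ counterpart (Neumann). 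You should state and apply that determinant identity explicitly rather than flag the resummation as an unresolved obstacle; your Wronskian computations, the boundary evaluations at $x=0$ of \eqref{1.7} and of its $x$-derivative, and the Hilbert--Schmidt factorization for the trace-class claim are all correct.

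One caveat about the proposed cross-check. The identity you write, $\det(I_++uAv)=\det(I_++vAu)$, is not what cyclicity gives (and is false in general); cyclicity yields $\det(I_++uAv)=\det(I_++Avu)=\det(I_++vuA)$. More importantly, for $V\in L^1((0,\infty);dx)$ one only has $u,v\in L^2$, so the unbalanced operator $(H_{0,+}^D-zI_+)^{-1}V$ is not a priori bounded on $L^2((0,\infty);dx)$, let alone trace class, and $\det\!\big(I_++(H_{0,+}^D-zI_+)^{-1}V\big)$ is not well-defined as written. The legitimate rearrangement is to the symmetrized form: since $u(H_{0,+}^D-zI_+)^{-1/2}$ and $(H_{0,+}^D-zI_+)^{-1/2}v$ are both Hilbert--Schmidt, $\det\big(I_++\overline{u(H_{0,+}^D-zI_+)^{-1}v}\big)=\det\big(I_++\overline{(H_{0,+}^D-zI_+)^{-1/2}V(H_{0,+}^D-zI_+)^{-1/2}}\big)$, and the classical Jost-function identification of the (symmetrized) perturbation determinant then applies. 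So the alternative route works, but only after replacing your unbalanced operator by the symmetrized one.
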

%%%%%%%%%%%%%%%%%%%%%%%%%%%%%%%%%%%%%%

Equation \eqref{1.11} is the modern formulation of the classical result due to Jost and
Pais \cite{JP51} (cf.\ the detailed discussion in \cite{GM03}). Performing calculations similar to Section 4 in \cite{GM03} for the pair of operators $H_{0,+}^N$ and $H_+^N$, one obtains the analogous result \eqref{1.12}. For similar considerations in the context of finite interval problems, we refer to Dreyfus and Dym \cite{DD78} and Levit and Smilansky \cite{LS77}.

We emphasize that \eqref{1.11} and \eqref{1.12} exhibit the
remarkable fact that the Fredholm determinant associated with trace
class operators in the infinite-dimensional space $L^2((0,\infty);
dx)$ is reduced to a simple Wronski determinant of $\bbC$-valued
distributional solutions of \eqref{1.4}. This fact goes back to Jost
and Pais \cite{JP51} (see also \cite{GM03}, \cite{Ne72}, \cite{Ne80},
\cite[Sect.\ 12.1.2]{Ne02}, \cite{Si00}, \cite[Proposition 5.7]{Si05},
and the extensive literature cited in these references).
The principal aim of this paper is to explore the extent to which
this fact may generalize to higher dimensions $n\in\bbN$, $n\geq 2$.
While a straightforward generalization of \eqref{1.11}, \eqref{1.12}
appears to be difficult, we will next derive a formula for the ratio
of such determinants which indeed permits a direct extension to
higher dimensions.

For this purpose we introduce the boundary trace operators
$\ga_D$ (Dirichlet trace) and $\ga_N$ (Neumann trace) which, in the
current one-dimensional half-line situation, are just the functionals,
\begin{equation}
\ga_D \colon \begin{cases} C([0,\infty)) \to \bbC, \\
\hspace*{1.3cm} g \mapsto g(0), \end{cases}   \quad
\ga_N \colon \begin{cases}C^1([0,\infty)) \to \bbC,  \\
\hspace*{1.43cm} h \mapsto   - h'(0). \end{cases}
\end{equation}
In addition, we denote by $m_{0,+}^D$, $m_+^D$, $m_{0,+}^N$, and $m_+^N$
the Weyl--Titchmarsh $m$-functions corresponding to $H_{0,+}^D$,
$H_{+}^D$, $H_{0,+}^N$, and $H_{+}^N$, respectively, that is,
\begin{align}
m_{0,+}^D(z) &= i z^{1/2}, \qquad m_{0,+}^N (z)= -\frac{1}{m_{0,+}^D(z)}
= i z^{-1/2},  \lb{1.14} \\
m_{+}^D(z) &= \frac{f_+'(z,0)}{f_+(z,0)}, \quad m_{+}^N (z)=
-\frac{1}{m_{+}^D(z)} = -\frac{f_+(z,0)}{f_+'(z,0)}.  \lb{1.15}
\end{align}
We briefly recall the spectral theoretic significance of $m_{+}^D$ in the special case where $V$ is real-valued: It is a Herglotz function (i.e., it maps the open complex upper half-plane $\bbC_+$ analytically into itself) and the measure $d\rho^D_+$ in its Herglotz representation is then the spectral measure of the operator $H_{+}^D$ and hence encodes all spectral information of $H_{+}^D$. Similarly, $m_{+}^D$ also encodes all spectral information of $H_{+}^N$ since $-1/m_{+}^D = m_{+}^N$ is also a Herglotz function and the measure
$d\rho^N_+$ in its Herglotz representation represents the spectral measure of the operator $H_{+}^N$. In particular, $d\rho^D_+$ (respectively, $d\rho^N_+$) uniquely determine $V$ a.e.\ on $(0,\infty)$ by the inverse spectral approach of Gelfand and Levitan \cite{GL55} or Simon \cite{Si99}, \cite{GS00} (see also Remling \cite{Re03} and Section 6 in the survey \cite{Ge07}).

Then we obtain the following result for the ratio of the perturbation determinants in
\eqref{1.11} and \eqref{1.12}:

%%%%%%%%%%%%%%%%%%%%%%%%%%%%%%%%%%%%%
\begin{theorem} \lb{t1.2}
Assume $V\in L^1((0,\infty);dx)$ and let $z\in\bbC\backslash\si(H_+^D)$
with $\Im(z^{1/2})>0$. Then,
\begin{align}
& \frac{\det\Big(I_+ +\ol{u\big(H_{0, +}^N-z I_+\big)^{-1} v}\,\Big)}
{\det\Big(I_+ +\ol{u\big(H_{0, +}^D-z I_+\big)^{-1}v}\,\Big)}
\no \\
&\quad = 1 - \Big(\,\ol{\ga_N(H_+^D-z I_+)^{-1}V
\big[\ga_D(H_{0,+}^N-\ol{z}I_+)^{-1}\big]^*}\,\Big)  \lb{1.16}  \\
& \quad = \f{W(f_+(z),\phi_+^N(z))}{i z^{1/2}W(f_+(z),\phi_+^D(z))} =
\f{f'_+(z,0)}{i z^{1/2}f_+(z,0)} = \f{m_+^D(z)}{m_{0,+}^D(z)} =
\f{m_{0,+}^N(z)}{m_+^N(z)}.   \lb{1.17}
\end{align}
\end{theorem}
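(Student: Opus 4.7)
The plan is to treat the two displayed equalities in Theorem~\ref{t1.2} separately: identity \eqref{1.17} is essentially a corollary of Theorem~\ref{t1.1}, while the determinant identity \eqref{1.16} requires a direct computation of the boundary-trace scalar on the right. I would begin with \eqref{1.17}. Theorem~\ref{t1.1} supplies the numerator as $f_+'(z,0)/(iz^{1/2})$ and the denominator as $f_+(z,0)$, so the ratio equals $f_+'(z,0)/(iz^{1/2}f_+(z,0))$, which by \eqref{1.14}--\eqref{1.15} is exactly $m_+^D(z)/m_{0,+}^D(z)=m_{0,+}^N(z)/m_+^N(z)$. The Wronskian form then follows by evaluating $W(\cdot,\cdot)$ at $x=0$: the Volterra initial data $\phi_+^D(z,0)=0$, $(\phi_+^D)'(z,0)=1$ (readable off from the integral equation for $\phi_+^D$) yield $W(f_+(z,\cdot),\phi_+^D(z,\cdot))=f_+(z,0)$, and the corresponding Neumann normalization of $\phi_+^N$ yields $W(f_+(z,\cdot),\phi_+^N(z,\cdot))=f_+'(z,0)$.

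For \eqref{1.16}, the key observation is that because the ``boundary'' $\{0\}$ is a single point, $L^2(\partial)$ collapses to $\mathbb{C}$ and the operator whose closure appears on the right is multiplication by a complex number; it suffices to compute this number by applying the operator to $1\in\mathbb{C}$. I would carry out this computation in three steps. Step~(i): use the adjoint identity together with self-adjointness of $H_{0,+}^N$ to obtain
\[
\bigl[\gamma_D\bigl(H_{0,+}^N-\bar zI_+\bigr)^{-1}\bigr]^{\!*}1 \;=\; G_{0,+}^N(z;0,\cdot) \;=\; iz^{-1/2}e^{iz^{1/2}\cdot},
\]
exploiting the explicit free Neumann Green's function at the boundary. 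Step~(ii): establish the representation
\[
\bigl[(H_+^D-zI_+)^{-1}h\bigr]'(0) \;=\; \frac{1}{f_+(z,0)}\int_0^\infty h(x)\,f_+(z,x)\,dx
\]
for suitable $h$, which I can either read off from the Dirichlet Green's function $G_+^D(z;x,y)=\phi_+^D(z,x_<)f_+(z,x_>)/f_+(z,0)$ (combined with $(\phi_+^D)'(z,0)=1$), or derive by Green's second identity using that $f_+(z,\cdot)$ solves the homogeneous equation and decays at infinity. Step~(iii): apply Steps (i)--(ii) with $h=iz^{-1/2}Ve^{iz^{1/2}\cdot}$, and use the Volterra equation \eqref{1.7} and its derivative evaluated at $x=0$ to obtain
\[
\int_0^\infty e^{iz^{1/2}x}V(x)f_+(z,x)\,dx \;=\; iz^{1/2}f_+(z,0)-f_+'(z,0).
\]
Combining these with $\gamma_N g=-g'(0)$, the scalar works out to $1+iz^{-1/2}m_+^D(z)$, so the right-hand side of \eqref{1.16} equals $-iz^{-1/2}m_+^D(z)=m_+^D(z)/m_{0,+}^D(z)$, matching \eqref{1.17}.

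The main obstacle is a technical one: interpreting the unbarred composition in \eqref{1.16} under the sole assumption $V\in L^1((0,\infty);dx)$. The intermediate function $V(x)\cdot iz^{-1/2}e^{iz^{1/2}x}$ is \emph{a priori} only in $L^1$, so the naive application of the $L^2$ resolvent $(H_+^D-zI_+)^{-1}$ to it is not immediately meaningful. The remedy encoded by the overline is to factor $V=uv$ with $u,v\in L^2((0,\infty);dx)$ and regroup as
\[
\bigl[\gamma_N(H_+^D-zI_+)^{-1}u\bigr]\bigl[v\bigl(\gamma_D(H_{0,+}^N-\bar zI_+)^{-1}\bigr)^{\!*}\bigr],
\]
each factor being a bounded map between $L^2((0,\infty);dx)$ and $\mathbb{C}$; the product then has a well-defined bounded closure whose value on $1$ is computed as above, and the formal manipulations of Steps~(i)--(iii) return precisely this value. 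This factorized viewpoint is also the one that generalizes in the multidimensional setting developed in the remainder of the paper, where the single-point boundary is replaced by $L^2(\partial\Omega;d^{n-1}\sigma)$ and Krein-type resolvent formulas replace the explicit 1-D Green's functions used here.
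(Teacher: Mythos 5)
Your proposal is mathematically correct, and it takes a more concrete, explicitly one-dimensional route than the one the paper actually follows. The paper never gives a self-contained proof of Theorem~\ref{t1.2}: formula \eqref{1.17} is treated as an immediate consequence of Theorem~\ref{t1.1}, and \eqref{1.16} is implicitly regarded as the $n=1$ shadow of the abstract machinery developed in Section~\ref{s4}, namely the commutation identity \eqref{4.0}, the Krein-type resolvent formula \eqref{B.5}, the Dirichlet--Neumann resolvent difference of Lemma~\ref{lA.3}, and the product rule of Lemma~\ref{l4.1}. By contrast, you \emph{verify} \eqref{1.16} directly: using Theorem~\ref{t1.1} you compute the left-hand side, and you separately evaluate the scalar on the right via the explicit free Neumann Green's function and the Dirichlet Green's function for $H_+^D$ together with the Volterra equation \eqref{1.7}, checking both equal $m_+^D(z)/m_{0,+}^D(z)$. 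I have verified the key intermediate formulas: with the convention $W(f,g)=fg'-f'g$ the Green's-function constant is $c=-1/W(u_1,u_2)$, which gives $G_{0,+}^N(z;0,\cdot)=iz^{-1/2}e^{iz^{1/2}\cdot}$ as you claim; the kernel $\partial_x G_+^D(z;x,y)|_{x=0}=f_+(z,y)/f_+(z,0)$ is correct; and the Volterra identity $\int_0^\infty e^{iz^{1/2}x}V(x)f_+(z,x)\,dx=iz^{1/2}f_+(z,0)-f_+'(z,0)$ checks out, leading to the scalar $1+iz^{-1/2}m_+^D(z)$ and hence $1-(\cdot)=m_+^D(z)/m_{0,+}^D(z)$. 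Your factorization $V=uv$ and the regrouping of the composite operator is exactly the right way to give rigorous sense to the closure under $V\in L^1$, since $f_+(z,\cdot)v$ and $u\,e^{iz^{1/2}\cdot}$ are both in $L^2$. What the computational route buys is a transparent, self-contained proof for $n=1$ (where Hypothesis~\ref{h2.1} does not even apply); what the paper's operator-theoretic route buys is the structure that survives the passage to $n\ge 2$, which is the whole point of isolating \eqref{1.16} in the first place. One small caveat worth flagging: the paper never defines $\phi_+^N$, and with the obvious normalization $\phi_+^N(z,0)=1$, $(\phi_+^N)'(z,0)=0$ one gets $W(f_+(z,\cdot),\phi_+^N(z,\cdot))=-f_+'(z,0)$, not $+f_+'(z,0)$; this is a sign ambiguity already present in the statement of \eqref{1.17} (compare the explicit minus sign in \eqref{1.12}), and you inherit it rather than resolve it, but it does not affect the substantive determinant identity \eqref{1.16}.
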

%%%%%%%%%%%%%%%%%%%%%%%%%%%%%%%%%%%%

At first sight it may seem unusual to even attempt to derive
\eqref{1.16} in the one-dimensional context since \eqref{1.17}
already yields the reduction of a Fredholm determinant to a simple
Wronski determinant. However, we will see in Section \ref{s4} (cf.\
Theorem \ref{t4.1}) that it is precisely \eqref{1.16} that permits a
natural extension to dimensions $n\in\bbN$, $n\geq 2$.
Moreover, the latter is also instrumental in proving the analog of
\eqref{1.17} in terms of Dirichlet-to-Neumann maps (cf.\ Theorem
\ref{t4.2}).

The proper multi-dimensional generalizations to Schr\"odinger operators in $L^2(\Om;d^n x)$, corresponding to an open set $\Om \subset \bbR^n$ with compact, nonempty boundary $\partial\Om$, more precisely, the proper operator-valued generalization of the Weyl--Titchmarsh function $m_+^D(z)$ is then given by the Dirichlet-to-Neumann map, denoted by $M_{\Om}^D(z)$. This operator-valued map  indeed plays a fundamental role in our extension of \eqref{1.17} to the higher-dimensional case. In particular, under Hypothesis \ref{h2.6} on $\Omega$ and $V$ (which regulates smoothness properties of $\partial\Om$ and $L^p$-properties of $V$), we will prove the following multi-dimensional extension of \eqref{1.16} and \eqref{1.17} in Section
\ref{s4}:

%%%%%%%%%%%%%%%%%%%%%%%%%%%%%%%%%%%%%%%
\begin{theorem} \lb{t1.3}
Assume Hypothesis \ref{h2.6} and let $k\in\bbN$, $k\geq p$ and
$z\in\bbC\big\backslash\big(\si\big(H_{\Om}^D\big)\cup
\si\big(H_{0,\Om}^D\big) \cup \si\big(H_{0,\Om}^N\big)\big)$. Then,
\begin{align}
&
\frac{\det{}_k\Big(I_{\Om}+\ol{u\big(H_{0,\Om}^N-zI_{\Om}\big)^{-1}v}\,\Big)}
{\det{}_k\Big(I_{\Om}+\ol{u\big(H_{0,\Om}^D-zI_{\Om}\big)^{-1}v}\,\Big)} \no \\
& \quad = \det{}_k\Big(I_{\dOm} -
\ol{\ga_N\big(H_{\Om}^D-zI_{\Om}\big)^{-1} V
\big[\ga_D(H_{0,\Om}^N-\ol{z}I_{\Om})^{-1}\big]^*}\,\Big)
e^{\tr(T_k(z))}   \lb{1.18}  \\
& \quad = \det{}_k\big(M_{\Om}^{D}(z)M_{0,\Om}^{D}(z)^{-1}\big)
e^{\tr(T_k(z))}.   \lb{1.19}
\end{align}
\end{theorem}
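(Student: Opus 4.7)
The overall plan is to port the one-dimensional computation of Theorem \ref{t1.2} to the multi-dimensional setting, replacing Wronskians and scalar $m$-functions by Dirichlet-to-Neumann operators on $L^2(\dOm; d^{n-1}\si)$. The primary inputs are: (a) a multiplicative rule for modified Fredholm determinants $\det{}_k$, which is no longer exact and produces an exponential correction assembled into a single operator $T_k(z)$; (b) a Krein-type formula factoring $(H_{0,\Om}^N - zI_\Om)^{-1} - (H_{0,\Om}^D - zI_\Om)^{-1}$ through the boundary via $\ga_D$ and $M_{0,\Om}^D(z)^{-1}$; (c) a Birman--Schwinger--type identification of $(I_\Om + \ol{u(H_{0,\Om}^D - zI_\Om)^{-1}v})^{-1}$ in terms of the perturbed resolvent $(H_{\Om}^D - zI_\Om)^{-1}$; and (d) the cyclicity rule $\det{}_k(I_\Om + XY) = \det{}_k(I_{\dOm} + YX)\, e^{\tr(\cdots)}$, applied with appropriate Schatten-class control.

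First, I would write the ratio on the left of \eqref{1.18} as $\det{}_k\bigl((I_\Om + A_N)(I_\Om + A_D)^{-1}\bigr)\,e^{\tr(T_k(z))}$, where $A_\bullet := \ol{u(H_{0,\Om}^\bullet - zI_\Om)^{-1}v}$. A short manipulation yields
\[
(I_\Om + A_N)(I_\Om + A_D)^{-1} = I_\Om + u\bigl[(H_{0,\Om}^N - zI_\Om)^{-1} - (H_{0,\Om}^D - zI_\Om)^{-1}\bigr](I_\Om + A_D)^{-1}v,
\]
while the Birman--Schwinger identification gives $(I_\Om + A_D)^{-1}v = v - v(H_\Om^D - zI_\Om)^{-1}V$. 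Inserting the Krein-type boundary factorization of the resolvent difference then converts the argument of $\det{}_k$ into a product of the shape $u X^* Y Z V$, with $X = \ga_D(H_{0,\Om}^N - \ol zI_\Om)^{-1}$ delivering to the boundary, $Y$ acting on $L^2(\dOm; d^{n-1}\si)$, and $Z$ composed from $\ga_N$ and $(H_\Om^D - zI_\Om)^{-1}$ returning from the boundary. Repeated application of the cyclicity rule above collapses the interior determinant to a determinant on $L^2(\dOm; d^{n-1}\si)$, producing precisely the operator
\[
I_{\dOm} - \ol{\ga_N(H_\Om^D - zI_\Om)^{-1} V \bigl[\ga_D(H_{0,\Om}^N - \ol zI_\Om)^{-1}\bigr]^*}
\]
on the right-hand side of \eqref{1.18}, with all new tracial residues absorbed into $T_k(z)$.

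For the equivalence with \eqref{1.19}, I would invoke the Krein-type identity
\[
M_\Om^D(z) - M_{0,\Om}^D(z) = -\,\ga_N(H_\Om^D - zI_\Om)^{-1} V \bigl[\ga_D(H_{0,\Om}^N - \ol zI_\Om)^{-1}\bigr]^*,
\]
which follows from the second resolvent identity $H_\Om^D - H_{0,\Om}^D = V$ composed with the two boundary traces (together with the adjoint relation tying the Neumann trace of the Dirichlet resolvent to the Dirichlet trace of the Neumann resolvent). Solving for the difference and multiplying by $M_{0,\Om}^D(z)^{-1}$ on the right gives
\[
I_{\dOm} - \ol{\ga_N(H_\Om^D - zI_\Om)^{-1} V [\ga_D(H_{0,\Om}^N - \ol zI_\Om)^{-1}]^*} = M_\Om^D(z)M_{0,\Om}^D(z)^{-1},
\]
after one additional cyclic permutation inside $\det{}_k$, whose residue is again folded into $T_k(z)$.

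The main obstacle is the careful bookkeeping of the cumulative exponential corrections $e^{\tr(T_k(z))}$: each use of the modified multiplicativity, each cyclic permutation under $\det{}_k$, and each factorization through the boundary contributes its own tracial residue, and one must verify that these residues recombine into a single, path-independent operator $T_k(z)$ with a well-defined and finite trace. Establishing the requisite $\cB_k$-membership for the various combinations of $\ga_D$, $\ga_N$, bulk resolvents, and multiplication by $V$ under Hypothesis \ref{h2.6} --- in essence, the fractional Sobolev mapping properties of the boundary trace operators composed with the resolvents --- is the nontrivial prerequisite that makes the whole chain of determinant manipulations meaningful. Everything else reduces to the second resolvent identity and to the fact that $M_\Om^D(z)M_{0,\Om}^D(z)^{-1} - I_{\dOm}$ lies in the appropriate Schatten ideal on $L^2(\dOm; d^{n-1}\si)$.
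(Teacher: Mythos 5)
Your overall architecture is the paper's: split the proof into (i) a multiplicativity lemma for $\det_k$ with an explicit trace-class correction $T_k$, (ii) a Nakamura/Krein-type factorization of the free-resolvent difference through the boundary, (iii) the Birman--Schwinger identification of $[I_\Om + \ol{u(H_{0,\Om}^D-zI_\Om)^{-1}v}]^{-1}$ via the perturbed resolvent, (iv) cyclicity to move the determinant to $L^2(\dOm)$, and (v) a Krein identity expressing $M_\Om^D(z)M_{0,\Om}^D(z)^{-1}$ through the same boundary operator. All of that matches Theorem \ref{t4.1}, Lemma \ref{l3.5}, and Theorem \ref{t4.2}.

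There is, however, a genuine gap in step (v). You assert that the second resolvent identity composed with the two boundary traces gives $M_\Om^D(z)-M_{0,\Om}^D(z)=-\,\ga_N(H_\Om^D-zI_\Om)^{-1}V\bigl[\ga_D(H_{0,\Om}^N-\ol z I_\Om)^{-1}\bigr]^*$. That is not what the resolvent identity yields: since both $M$'s are built from the Dirichlet realizations, the trace operator appearing on both ends is $\ga_N$ against Dirichlet resolvents, and one gets
\begin{equation*}
M_{0,\Om}^D(z) - M_\Om^D(z) = \ol{\wti\ga_N\big(H_\Om^D-zI_\Om\big)^{-1}V\big[\ga_N\big(\big(H_{0,\Om}^D-zI_\Om\big)^{-1}\big)^*\big]^*}
\end{equation*}
(the paper's \eqref{3.35}), with $\ga_N(H_{0,\Om}^D-\cdot)^{-1}$, not $\ga_D(H_{0,\Om}^N-\cdot)^{-1}$, on the right. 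Passing to the $\ga_D$-on-Neumann form that actually appears in \eqref{1.18} and \eqref{1.19} is precisely what requires multiplying on the right by $M_{0,\Om}^D(z)^{-1}=-M_{0,\Om}^N(z)=-\ga_D[\ga_D((H_{0,\Om}^N-zI_\Om)^{-1})^*]^*$ and then invoking the nontrivial solution-operator identity
\begin{equation*}
\big[\ga_N\big(\big(H_{0,\Om}^D-zI_\Om\big)^{-1}\big)^*\big]^*\,\ga_D\,\big[\ga_D\big(\big(H_{0,\Om}^N-zI_\Om\big)^{-1}\big)^*\big]^* = -\big[\ga_D\big(\big(H_{0,\Om}^N-zI_\Om\big)^{-1}\big)^*\big]^*,
\end{equation*}
which is the paper's \eqref{3.44} and is proved by a separate uniqueness argument for the Helmholtz boundary value problems (Theorem \ref{t3.1}). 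Without this step, the chain $\,[\ga_N(R_0^D)^*]^*\to[\ga_D(R_0^N)^*]^*\,$ does not follow, so the passage to \eqref{1.19} is not justified. Two smaller inaccuracies: cyclicity for $\det_k$ (moving $YX$ to $XY$) is exact and contributes no exponential residue---only the product rule does; and the claimed simplification $(I_\Om+A_D)^{-1}v = v - v(H_\Om^D-zI_\Om)^{-1}V$ does not match the correct resolvent identity $\ol{(H_\Om^D-zI_\Om)^{-1}v} = \ol{(H_{0,\Om}^D-zI_\Om)^{-1}v}\,[I_\Om+A_D]^{-1}$ used in the paper.
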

%%%%%%%%%%%%%%%%%%%%%%%%%%%%%%%%%%%%%%%
Here, ${\det}_k(\cdot)$ denotes the modified Fredholm determinant in
connection with $\cB_k$ perturbations of the identity and $T_k(z)$ is
some trace class operator. In particular, $T_2(z)$ is given by
\begin{equation}
T_2(z)=\ol{\ga_N\big(H_{0,\Om}^D-zI_{\Om}\big)^{-1} V
\big(H_{\Om}^D-zI_{\Om}\big)^{-1} V
\big[\ga_D \big(H_{0,\Om}^N-\ol{z}I_{\Om}\big)^{-1}\big]^*},
\end{equation}
where $I_{\Om}$ and $I_{\partial\Om}$ represent the identity operators
in $L^2(\Om; d^n x)$ and  $L^2(\partial\Om; d^{n-1} \sigma)$,
respectively (with $d^{n-1}\sigma$ denoting the surface measure on
$\partial\Om$). The sudden appearance of the term
$\exp(\tr(T_k(z)))$ in \eqref{1.18} and \eqref{1.19}, when compared
to the one-dimensional case, is due to the necessary use of the
modified determinant ${\det}_k(\cdot)$ in Theorem \ref{t1.3}.

We note that the multi-dimensional extension \eqref{1.18} of \eqref{1.16}, under the stronger hypothesis $V\in L^2(\Om; d^n x)$, $n=2,3$, first appeared in \cite{GLMZ05}. However, the present results in Theorem \ref{t1.3} go decidedly beyond those in 
\cite{GLMZ05} in the following sense: $(i)$ the class of domains $\Omega$ permitted by Hypothesis \ref{h2.6} (actually, Hypothesis \ref{h2.1}) is greatly enlarged as compared to \cite{GLMZ05};  $(ii)$ the multi-dimensional extension \eqref{1.19} of \eqref{1.17} invoking Dirichlet-to-Neumann maps is a new (and the most significant) result in this paper; $(iii)$ while \cite{GLMZ05} focused on dimensions $n=2,3$, we now treat the general case $n\in\bbN$, $n\geq 2$; $(iv)$ we provide an application involving eigenvalue counting functions at the end of Section \ref{s4}; $(v)$ we study a representation of the product formula for modified Fredholm determinants, which should be of independent interest, at the beginning of Section \ref{s4}. 

The principal reduction in Theorem \ref{t1.3} reduces (a ratio of) modified Fredholm determinants associated with operators in $L^2(\Om; d^n x)$ on the left-hand side of \eqref{1.18} to modified Fredholm determinants associated with operators in
$L^2(\partial\Om; d^{n-1} \sigma)$ on the right-hand side of \eqref{1.18} and especially, in \eqref{1.19}. This is the analog of the reduction described in the one-dimensional context of Theorem \ref{t1.2}, where $\Om$ corresponds to the half-line $(0,\infty)$ and its boundary $\partial\Om$ corresponds to the one-point set $\{0\}$. As a result, the ratio of determinants on the left-hand side of \eqref{1.16} associated with operators in
$L^2((0,\infty); dx)$ is reduced to ratios of Wronskians and Weyl--Titchmarsh functions on the right-hand side of \eqref{1.16} and in \eqref{1.17}.

Finally, we briefly list most of the notational conventions used
throughout this paper. Let $\cH$ be a separable complex Hilbert
space, $(\cdot,\cdot)_{\cH}$ the scalar product in $\cH$ (linear in
the second factor), and $I_{\cH}$ the identity operator in $\cH$.
Next, let $T$ be a linear operator mapping (a subspace of) a
Banach space into another, with $\dom(T)$ and $\ran(T)$ denoting the
domain and range of $T$. The closure of a closable operator $S$ is
denoted by $\ol S$. The kernel (null space) of $T$ is denoted by
$\ker(T)$. The spectrum and resolvent set of a closed linear operator
in $\cH$ will be denoted by $\sigma(\cdot)$ and $\rho(\cdot)$. The
Banach spaces of bounded and compact linear operators in $\cH$ are
denoted by $\cB(\cH)$ and $\cB_\infty(\cH)$, respectively. Similarly,
the Schatten--von Neumann (trace) ideals will subsequently be denoted
by $\cB_k(\cH)$, $k\in\bbN$. Analogous notation $\cB(\cH_1,\cH_2)$,
$\cB_\infty (\cH_1,\cH_2)$, etc., will be used for bounded, compact,
etc., operators between two Hilbert spaces $\cH_1$ and $\cH_2$. In
addition, $\tr(T)$ denotes the trace of a trace class operator
$T\in\cB_1(\cH)$ and $\det_{p}(I_{\cH}+S)$ represents the (modified)
Fredholm determinant associated with an operator $S\in\cB_k(\cH)$,
$k\in\bbN$ (for $k=1$ we omit the subscript $1$). Moreover, $\cX_1
\hookrightarrow \cX_2$ denotes the continuous embedding of the Banach
space $\cX_1$ into the Banach space $\cX_2$.

For general references on the theory of (modified) Fredholm determinants we
refer, for instance, to \cite[Sect.\ XI.9]{DS88}, \cite[Ch.\ Chs.\ IX--XI]{GGK00}, 
\cite[Ch.\ Sect.\ 4.2]{GK69}, \cite[Sect.\ XIII.17]{RS78}, \cite{Si77}, and \cite[Ch.\ 9]{Si05}.

%%%%%%%%%%%%%%%%%%%%%%%%%%%%%%%%%%%%%%%%
%%%%%%%%%%%%%%%%%%%%%%%%%%%%%%%%%%%%%%%%
\section{Schr\"odinger Operators with Dirichlet and Neumann boundary conditions}
\label{s2}
%%%%%%%%%%%%%%%%%%%%%%%%%%%%%%%%%%%%%%%%
%%%%%%%%%%%%%%%%%%%%%%%%%%%%%%%%%%%%%%%%

In this section we primarily focus on various properties of Dirichlet,
$H^D_{0,\Om}$, and Neumann, $H^N_{0,\Om}$, Laplacians in
$L^2(\Om;d^n x)$ associated with open sets $\Omega\subset \bbR^n$, $n\in\bbN$, $n\geq 2$, introduced in Hypothesis \ref{h2.1} below. In particular, we study mapping properties of $\big(H^{D,N}_{0,\Om}-zI_{\Om}\big)^{-q}$, $q\in [0,1]$ (with $I_{\Om}$ the identity operator in $L^2(\Om; d^n x)$) and trace ideal properties of the maps 
$f \big(H^{D,N}_{0,\Om}-zI_{\Om}\big)^{-q}$, $f\in L^p(\Om; d^n x)$, for appropriate $p\geq 2$, and
$\gamma_N \big(H^{D}_{0,\Om}-zI_{\Om}\big)^{-r}$, and
$\gamma_D \big(H^{N}_{0,\Om}-zI_{\Om}\big)^{-s}$, for appropriate
$r>3/4$, $s>1/4$, with $\gamma_N$ and $\gamma_D$ being the Neumann and Dirichlet boundary trace operators defined in \eqref{2.2} and \eqref{2.3}.

At the end of this section we then introduce the Dirichlet and Neumann
Schr\"odinger operators $H^D_{\Om}$ and $H^N_{\Om}$ in
$L^2(\Om;d^n x)$, that is, perturbations of the Dirichlet and Neumann Laplacians $H^D_{0,\Om}$ and $H^N_{0,\Om}$ by a potential $V$ satisfying
Hypothesis \ref{h2.6}.

We start with introducing our assumptions on the set $\Omega$:

%%%%%%%%%%%%%%%%%%%%%%%%%%%%%%%%%%%%%%%
\begin{hypothesis} \lb{h2.1}
Let $n\in\bbN$, $n\geq 2$, and assume that $\Omega\subset{\bbR}^n$ is
an open set with a compact, nonempty boundary
$\partial\Omega$. In addition,
we assume that one of the following three conditions holds: \\
$(i)$ \, $\Omega$ is of class $C^{1,r}$ for some $1/2 < r <1$; \\
$(ii)$  \hspace*{.0001pt} $\Omega$ is convex; \\
$(iii)$ $\Omega$ is a Lipschitz domain satisfying a {\it uniform exterior ball condition} $($UEBC\,$)$.  
\end{hypothesis}
%%%%%%%%%%%%%%%%%%%%%%%%%%%%%%%%%%%%%%%

We note that while $\dOm$ is assumed to be compact, $\Om$ may be unbounded in connection with conditions $(i)$ or $(iii)$.
For more details in this context we refer to Appendix \ref{sA}.

First, we introduce the boundary trace operator $\ga_D^0$
(Dirichlet trace) by
\begin{equation}
\ga_D^0\colon C(\ol{\Om})\to C(\dOm), \quad \ga_D^0 u = u|_\dOm .
\end{equation}
Then there exists a bounded, linear operator $\gamma_D$ (cf.
\cite[Theorem 3.38]{Mc00}),
\begin{equation}
\ga_D\colon H^{s}(\Om)\to H^{s-(1/2)}(\dOm) \hookrightarrow \LdOm,
\quad 1/2<s<3/2, \lb{2.2}
\end{equation}
whose action is compatible with that of $\ga_D^0$. That is, the two
Dirichlet trace  operators coincide on the intersection of their
domains. We recall that $d^{n-1}\sigma$ denotes the surface measure on
$\dOm$ and we refer to Appendix \ref{sA} for our notation in
connection with Sobolev spaces.

Next, we introduce the operator $\ga_N$ (Neumann trace) by
\begin{align}
\ga_N = \nu\cdot\ga_D\nabla \colon H^{s+1}(\Om)\to \LdOm, \quad
1/2<s<3/2, \lb{2.3}
\end{align}
where $\nu$ denotes outward pointing normal unit vector to
$\partial\Om$. It follows from \eqref{2.2} that $\ga_N$ is also a
bounded operator.

Given Hypothesis \ref{h2.1}, we introduce the self-adjoint and nonnegative Dirichlet and
Neumann Laplacians $H_{0,\Om}^D$ and $H_{0,\Om}^N$ associated
with the domain $\Om$ as follows,
\begin{align}
H^D_{0,\Om} = -\Delta, \quad \dom\big(H^D_{0,\Om}\big) = \{u\in H^{2}(\Om)
\,|\, \ga_D u = 0\}, \lb{2.4}
\\
H^N_{0,\Om} = -\Delta, \quad \dom\big(H^N_{0,\Om}\big) = \{u\in H^{2}(\Om)
\,|\, \ga_N u = 0\}. \lb{2.5}
\end{align}
A detailed discussion of $H^D_{0,\Om}$ and $H^N_{0,\Om}$ is provided in Appendix
\ref{sA}.

%%%%%%%%%%%%%%%%%%%%%%%%%%%%%%%%%%%%%
\begin{lemma} \lb{l2.2}
Assume Hypothesis \ref{h2.1}. Then the operators $H_{0,\Om}^D$
and $H_{0,\Om}^N$ introduced in \eqref{2.4} and \eqref{2.5} are
nonnegative and self-adjoint in $\LOm$ and the following boundedness 
properties hold for all $q\in [0,1]$ and $z\in\bbC\backslash[0,\infty)$, 
\begin{align}
\big(H_{0,\Om}^D-zI_{\Om}\big)^{-q},\, \big(H_{0,\Om}^N-zI_{\Om}\big)^{-q}
\in\cB\big(\LOm,H^{2q}(\Om)\big). \lb{2.6}
\end{align}
\end{lemma}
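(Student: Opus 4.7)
The argument splits into the self-adjointness statement and the boundedness claim \eqref{2.6}. For the former, the plan is to work with the closed, nonnegative sesquilinear forms $a^{D/N}(u,v) = (\nabla u, \nabla v)_{\LOm^n}$ with form domains $\dom(a^D) = H^1_0(\Om)$ and $\dom(a^N) = H^1(\Om)$, and apply the first representation theorem of Kato to produce nonnegative self-adjoint operators in $\LOm$. What then remains is to verify that these form-associated operators agree with those defined in \eqref{2.4}, \eqref{2.5}: any $u$ in the form domain with $-\Delta u \in \LOm$ and the appropriate vanishing boundary trace must in fact lie in $H^2(\Om)$. This global $H^2$-regularity is exactly the point where Hypothesis \ref{h2.1} enters---each of the alternatives $(i)$--$(iii)$ is tailored to guarantee it---and the details (together with references) are collected in Appendix \ref{sA}.

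For the boundedness claim \eqref{2.6}, fix $z\in\bbC\backslash[0,\infty) \subset \rho\big(H_{0,\Om}^{D/N}\big)$. The case $q=0$ is trivial. For $q=1$, the resolvent $(H_{0,\Om}^{D/N}-zI_{\Om})^{-1}$ sends $\LOm$ bijectively onto $\dom(H_{0,\Om}^{D/N}) \subset H^2(\Om)$, and the resulting linear map into $H^2(\Om)$ is bounded by the closed graph theorem; the graph norm on $\dom(H_{0,\Om}^{D/N})$ is equivalent to the $H^2(\Om)$-norm in view of \eqref{2.4}, \eqref{2.5} and the elliptic estimates recorded in Appendix \ref{sA}.

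For the intermediate range $q\in(0,1)$, the plan is to invoke complex interpolation. By the Borel functional calculus, $(H_{0,\Om}^{D/N}-zI_{\Om})^{-q}$ is bounded on $\LOm$ with range contained in $\dom\big((H_{0,\Om}^{D/N})^{q}\big)$; for a nonnegative self-adjoint operator, this fractional-power domain coincides with the complex interpolation space $[\LOm,\dom(H_{0,\Om}^{D/N})]_q$, which by the $q=1$ step embeds continuously into $[\LOm,H^2(\Om)]_q$. The main obstacle---and the step I expect to require the most care---is identifying this last interpolation space with $H^{2q}(\Om)$ for the nonsmooth domains admitted in Hypothesis \ref{h2.1}$(iii)$; this rests on the existence of a bounded (Stein-type) extension operator $H^s(\Om) \to H^s(\bbR^n)$, which is available under each alternative of Hypothesis \ref{h2.1}, with the precise formulation deferred to Appendix \ref{sA}.
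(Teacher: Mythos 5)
Your proposal reproduces the paper's strategy: self-adjointness via the sesquilinear form representation theorem, the $q=1$ case from the $H^2$-regularity $\dom\big(H_{0,\Om}^{D/N}\big)\subset H^2(\Om)$ established in Appendix \ref{sA} (Lemma \ref{lA.1}) together with the closed graph theorem, and the intermediate range $q\in(0,1)$ by interpolating between the endpoints using the extension operator to identify the interpolation space with $H^{2q}(\Om)$. The only deviation is cosmetic: the paper explicitly invokes \emph{real} interpolation (citing \cite{GLMZ05}, Lemma A.2), whereas you use \emph{complex} interpolation. In this $L^2$-Sobolev setting the two coincide---both $[\LOm,H^2(\Om)]_q$ and $(\LOm,H^2(\Om))_{q,2}$ equal $H^{2q}(\Om)$ as retracts of the corresponding spaces on $\bbR^n$, and for a nonnegative self-adjoint operator $A$ one has $\dom(A^q)=[\LOm,\dom(A)]_q=(\LOm,\dom(A))_{q,2}$---so the argument is sound either way, and neither method has a real advantage here.
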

%%%%%%%%%%%%%%%%%%%%%%%%%%%%%%%%%%%%

The fractional powers in \eqref{2.6} (and in subsequent analogous
cases) are defined via the functional calculus implied by the
spectral theorem for self-adjoint operators.

As explained in Appendix \ref{sA} (cf.\ particularly Lemma \ref{lA.2}), the
key ingredients in proving Lemma \ref{l2.2} are the inclusions
\begin{equation}
\dom\big(H_{0,\Om}^D\big) \subset H^2(\Om), \quad
\dom\big(H^N_{0,\Om}\big) \subset H^2(\Om)
\end{equation}
and methods based on real interpolation spaces.

For the remainder of this paper we agree to the simplified notation that the operator of multiplication by the measurable function $f$  in $\LOm$ is again denoted by the symbol $f$.

The next result is an extension of \cite[Lemma 6.8]{GLMZ05} and aims at an explicit discussion of the $z$-dependence of the constant $c$ appearing in estimate (6.48) of \cite{GLMZ05}.

%%%%%%%%%%%%%%%%%%%%%%%%%%%%%%%%%%%%
\begin{lemma} \lb{l2.3}
Assume Hypothesis \ref{h2.1} and let $2\leq p$, $(n/2p)<q\leq1$,
$f\in L^p(\Om;d^nx)$, and $z\in\bbC\backslash[0,\infty)$. Then,
\begin{align} \lb{2.7}
f\big(H_{0,\Om}^D-zI_{\Om}\big)^{-q}, \, f\big(H_{0,\Om}^N-zI_{\Om}\big)^{-q}
\in\cB_p\big(\LOm\big),
\end{align}
and for some $c>0$ $($independent of $z$ and $f$\,$)$
\begin{align}
\begin{split}
&\big\| f\big(H_{0,\Om}^D-zI_{\Om}\big)^{-q}\big\|_{\cB_p(\LOm)}^2
\\
& \qquad \leq
c\bigg(1+\frac{\abs{z}^{2q}+1}{\dist\big(z,\si\big(H_{0,\Om}^D\big)\big)^{2q}}\bigg)
\|(\abs{\cdot}^2-z)^{-q}\|_{L^p(\bbR^n;d^nx)}^2
\|f\|_{L^p(\Om;d^nx)}^2,
\\
&\big\| f\big(H_{0,\Om}^N-zI_{\Om}\big)^{-q}\big\|_{\cB_p(\LOm)}^2
\\
& \qquad \leq
c\bigg(1+\frac{\abs{z}^{2q}+1}{\dist\big(z,\si\big(H_{0,\Om}^N\big)\big)^{2q}}\bigg)
\|(\abs{\cdot}^2-z)^{-q}\|_{L^p(\bbR^n;d^nx)}^2
\|f\|_{L^p(\Om;d^nx)}^2.
\end{split} \lb{2.8}
\end{align}
\end{lemma}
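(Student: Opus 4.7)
The plan is to reduce the trace ideal membership on $\Om$ to the classical Kato--Seiler--Simon inequality on $\bbR^n$, by combining (i) a bounded extension operator $E\colon H^{2q}(\Om)\to H^{2q}(\bbR^n)$, available under any of the three geometric alternatives in Hypothesis~\ref{h2.1} (cf.\ Appendix~\ref{sA}), with (ii) the $H^{2q}$-mapping property of $(H_{0,\Om}^{D,N} - zI_\Om)^{-q}$ furnished by Lemma~\ref{l2.2}.

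In concrete terms, let $\tilde f$ denote the extension of $f$ by zero from $\Om$ to $\bbR^n$ and let $R_\Om\colon L^2(\bbR^n)\to L^2(\Om)$ be the restriction operator. I would first verify, by testing on functions in $L^2(\Om)$, the operator identity
\begin{equation*}
f(H_{0,\Om}^{D,N} - zI_\Om)^{-q} = R_\Om \bigl[\tilde f(-\Delta_{\bbR^n} - zI)^{-q}\bigr]\bigl[(-\Delta_{\bbR^n} - zI)^q E\bigr](H_{0,\Om}^{D,N} - zI_\Om)^{-q}.
\end{equation*}
To the inner bracket acting on $L^2(\bbR^n)$ one applies the Kato--Seiler--Simon inequality (see, e.g., \cite[Ch.\ 4]{Si05}),
\begin{equation*}
\bigl\|\tilde f\,(-\Delta_{\bbR^n} - zI)^{-q}\bigr\|_{\cB_p(L^2(\bbR^n))} \le (2\pi)^{-n/p}\|f\|_{L^p(\Om;d^nx)}\bigl\|(\abs{\cdot}^2-z)^{-q}\bigr\|_{L^p(\bbR^n;d^nx)},
\end{equation*}
observing that the hypothesis $q>n/(2p)$ is precisely what renders the right-hand $L^p$-norm finite (a direct scaling estimate on $\bbR^n$).

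For the two remaining bounded factors, the extension $E$ contributes a $z$-independent geometric constant, while the resolvent power is estimated using the inclusion $\dom\bigl((H_{0,\Om}^{D,N})^q\bigr)\subset H^{2q}(\Om)$ (cf.\ Lemma~\ref{lA.2}) together with the pointwise spectral bound $\abs{\lambda/(\lambda-z)}^{2q}\le 2C_q\bigl(1 + \abs{z}^{2q}/\dist(z,\si(H_{0,\Om}^{D,N}))^{2q}\bigr)$ for $\lambda\in\si(H_{0,\Om}^{D,N})\subset[0,\infty)$; functional calculus then gives
\begin{equation*}
\bigl\|(H_{0,\Om}^{D,N} - zI_\Om)^{-q}\bigr\|_{\cB(L^2(\Om),H^{2q}(\Om))}^2 \le c\biggl(1 + \frac{\abs{z}^{2q}+1}{\dist(z,\si(H_{0,\Om}^{D,N}))^{2q}}\biggr).
\end{equation*}
Composing the three bounds via multiplicativity of the $\cB_p$ quasinorm under composition with bounded operators and squaring produces \eqref{2.8}; the membership \eqref{2.7} then follows at once. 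The Dirichlet and Neumann cases are handled in parallel, each relying on the common $H^2$-regularity from Lemma~\ref{l2.2}.

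The main technical obstacle is to carry out the bookkeeping of the $z$-dependent constants so that the final bound matches the precise form of \eqref{2.8}: in particular, the middle factor $(-\Delta_{\bbR^n} - zI)^q\colon H^{2q}(\bbR^n)\to L^2(\bbR^n)$ has a natural Fourier-multiplier norm of order $(1+\abs{z})^q$ that has to be absorbed into the distance-to-spectrum factor on the right-hand side of \eqref{2.8}. One resolves this either by using $(1-\Delta_{\bbR^n})^q$ as the intermediate reference operator (together with the pointwise bound $\abs{(1+\abs{\xi}^2)/(\abs{\xi}^2-z)}\le 1 + (1+\abs{z})/\dist(z,[0,\infty))$) or by working throughout with a $z$-dependent equivalent norm on $H^{2q}(\bbR^n)$ adapted to $(-\Delta_{\bbR^n} - zI)^q$; in either case the extra growth is absorbed into the $(1 + (\abs{z}^{2q}+1)/\dist(z,\si(H_{0,\Om}^{D,N}))^{2q})^{1/2}$ factor already coming from the resolvent estimate on $\Om$.
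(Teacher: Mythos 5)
Your proposal is correct and follows essentially the same route as the paper's proof of Lemma \ref{l2.3}: the identity you write is precisely \eqref{2.12}, the Kato--Seiler--Simon estimate is the paper's invocation of \cite[Theorem 4.1]{Si05} in \eqref{2.20}--\eqref{2.21}, and the treatment of the middle factor $(H_0-zI)^q\cE(H_{0,\Om}-zI_\Om)^{-q}$ via the spectral theorem on $\Om$ together with a Fourier-multiplier bound on $\bbR^n$ (and the resulting $c\big(1+(\abs{z}^{2q}+1)/\dist(z,\si(H_{0,\Om}))^{2q}\big)$ constant) is exactly the paper's estimates \eqref{2.14}--\eqref{2.19}.
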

%%%%%%%%%%%%%%%%%%%%%%%%%%%%%%%%%%%
\begin{proof}
We start by noting that under the assumption that $\Om$ is a
Lipschitz domain, there is a bounded extension operator $\cE$,
\begin{equation}
\cE\in\cB\big(H^{s}(\Om),H^{s}(\bbR^n)\big) \, \text{ such that }
\, (\cE u){|_\Om} = u, \quad u\in H^{s}(\Om), \lb{2.9}
\end{equation}
for all $s\in\bbR$ (see, e.g., \cite{Ry99}). Next, for notational
convenience, we denote by $H_{0,\Om}$ either one of the operators
$H_{0,\Om}^D$ or $H_{0,\Om}^N$ and by $\cR_{\Om}$ the restriction
operator
\begin{equation}
\cR_{\Om}\colon \begin{cases} L^2(\bbR^n;d^nx) \to \LOm, \\
\hspace*{1.65cm}u \mapsto u|_{\Om}. \end{cases}
\end{equation}
Moreover, we introduce the following extension $\ti f$ of $f$,
\begin{equation}
\ti f(x) =
\begin{cases}f(x), & x\in\Om, \\ 0, &
x\in\bbR^n\backslash\Om, \end{cases} \quad  \ti f\in
L^p(\bbR^n;d^nx).
\end{equation}
Then,
\begin{equation} \lb{2.12}
f (H_{0,\Om}-zI_{\Om})^{-q}= \cR_\Om {\ti f}
(H_{0}-zI)^{-q}(H_{0}-zI)^{q}\cE (H_{0,\Om}-zI_{\Om})^{-q},
\end{equation}
where (for simplicity) $I$ denotes the identity operator in
$L^2(\bbR^n;d^nx)$ and $H_0$ denotes the nonnegative self-adjoint
operator
\begin{equation}
H_0 = -\Delta, \quad \dom(H_0)=H^{2}(\bbR^n)
\end{equation}
in $L^2(\bbR^n;d^nx)$.

Let $g\in \LOm$ and define $h=(H_{0,\Om}-zI_{\Om})^{-q}g$, then
by Lemma \ref{lA.2}, $h\in H^{2q}(\Om)\subset \LOm$. Using the
spectral theorem for the nonnegative self-adjoint operator
$H_{0,\Om}$ in $\LOm$, one computes,
\begin{align} \lb{2.14}
\norm{h}_{\LOm}^2 &= \norm{(H_{0,\Om}-zI_{\Om})^{-q}g}_{\LOm}^2
\no
\\
&= \int_{\si(H_{0,\Om})}
\abs{\la-z}^{-2q}\big(dE_{H_{0,\Om}}(\la)g,g\big)_{\LOm}
\\
&\leq \dist(z,\si(H_{0,\Om}))^{-2q}\norm{g}_{\LOm}^2 \no
\end{align}
and since
$(H_{0,\Om}+I_{\Om})^{-q}\in\cB(L^{2}(\Om;d^nx),H^{2q}(\Om))$,
\begin{align} \lb{2.15}
\norm{h}_{H^{2q}(\Om)}^2 &=
\norm{(H_{0,\Om}+I_{\Om})^{-q}(H_{0,\Om}+I_{\Om})^{q}h}_{H^{2q}(\Om)}^2
\leq c \norm{(H_{0,\Om}+I_{\Om})^{q}h}_{L^{2}(\Om;d^nx)}^2 \no
\\
&= c\int_{\si(H_{0,\Om})}
\abs{\la+1}^{2q}\big(dE_{H_{0,\Om}}(\la)h,h\big)_{\LOm} \no
\\
&\leq 2c\int_{\si(H_{0,\Om})}
\big(\abs{\la-z}^{2q}+\abs{z+1}^{2q}\big)
\big(dE_{H_{0,\Om}}(\la)h,h\big)_{\LOm}
\\
&= 2c\big(\norm{(H_{0,\Om}-zI_{\Om})^{q}h}_{H^{2q}(\Om)}^2 +
\abs{z+1}^{2q}\norm{h}_{\LOm}^2 \big)\no
\\
&\leq 2c\big(1+\abs{z+1}^{2q}\dist(z,\si(H_{0,\Om}))^{-2q}\big)
\norm{g}_{\LOm}^2, \no
\end{align}
where $E_{H_{0,\Om}}(\cdot)$ denotes the family of spectral projections of
$H_{0,\Om}$. Moreover, utilizing the representation of
$(H_{0}-zI)^{q}$ as the operator of multiplication by
$\big(\abs{\xi}^2-z\big)^{q}$ in the Fourier space
$L^2(\bbR^n;d^n\xi)$, and the fact that by \eqref{2.9}
\begin{equation}
\cE\in\cB\big(H^{2q}(\Om),H^{2q}(\bbR^n)\big) \cap
\cB\big(\LOm,L^2(\bbR^n;d^nx)\big),
\end{equation}
one computes
\begin{align} \lb{2.17}
\begin{split}
\norm{(H_{0}-zI)^{q}\cE h}_{L^2(\bbR^n;d^nx)}^2 &= \int_{\bbR^n}
d^n\xi\, \big|\abs{\xi}^2-z\big|^{2q} \, \abs{(\ha{\cE h})(\xi)}^2
\\
&\leq 2\int_{\bbR^n} d^n \xi
\big(\abs{\xi}^{4q}+\abs{z}^{2q}\big)\abs{(\ha{\cE h})(\xi)}^2
\\
&\leq 2\big(\norm{\cE h}_{H^{2q}(\bbR^n)}^2 +
\abs{z}^{2q}\norm{\cE h}_{L^2(\bbR^n;d^nx)}^2\big)
\\
&\leq 2c\big(\norm{h}_{H^{2q}(\Om)}^2 +
\abs{z}^{2q}\norm{h}_{\LOm}^2\big).
\end{split}
\end{align}
Combining the estimates \eqref{2.14}, \eqref{2.15}, and
\eqref{2.17}, one obtains
\begin{align}
(H_{0}-zI)^{q}\cE (H_{0,\Om}-zI_{\Om})^{-q} \in
\cB\big(\LOm,L^2(\bbR^n;d^nx)\big) \lb{2.18}
\end{align}
and the following norm estimate with some constant $c>0$,
\begin{align}
\norm{(H_{0}-zI)^{q}\cE (H_{0,\Om}-zI_{\Om})^{-q}}_{
\cB(\LOm,L^2(\bbR^n;d^nx)) }^2 \leq
c+\frac{c(\abs{z}^{2q}+1)}{\dist(z,\si(H_{0,\Om}))^{2q}}.
\lb{2.19}
\end{align}

Next, by  \cite[Theorem 4.1]{Si05} (or \cite[Theorem XI.20]{RS79})
one obtains
\begin{align}
\ti f(H_{0}-zI)^{-q}\in\cB_p\big(L^2(\bbR^n;d^nx)\big) \lb{2.20}
\end{align}
and
\begin{align}\begin{split}
\big\|\ti f(H_{0}-zI)^{-q}\big\|_{\cB_p(L^2(\bbR^n;d^nx))} &\leq
c\, \|(\abs{\cdot}^2-z)^{-q}\|_{L^p(\bbR^n;d^nx)} \|\ti
f\|_{L^p(\bbR^n;d^nx)} \\&= c\,
\|(\abs{\cdot}^2-z)^{-q}\|_{L^p(\bbR^n;d^nx)}
\|f\|_{L^p(\Om;d^nx)}. \lb{2.21}
\end{split}\end{align}
Thus, \eqref{2.7} follows from \eqref{2.12}, \eqref{2.18},
\eqref{2.20}, and \eqref{2.8} follows from \eqref{2.12},
\eqref{2.19}, and \eqref{2.21}.
\end{proof}
%%%%%%%%%%%%%%%%%%%%%%%%%%%%%%%%%%%

Next we recall certain mapping properties of powers of the resolvents of Dirichlet and Neumann Laplacians multiplied by the Neumann and Dirichlet boundary trace operators, respectively:

%%%%%%%%%%%%%%%%%%%%%%%%%%%%%%%%%%%
\begin{lemma} \lb{l2.4}
Assume Hypothesis \ref{h2.1} and let $\eps > 0$,
$z\in\bbC\backslash[0,\infty)$. Then,
\begin{align}
\ga_N\big(H_{0,\Om}^D-zI_{\Om}\big)^{-(3+\eps)/4},
\ga_D\big(H_{0,\Om}^N-zI_{\Om}\big)^{-(1+\eps)/4} \in
\cB\big(\LOm,\LdOm\big).  \lb{2.22}
\end{align}
\end{lemma}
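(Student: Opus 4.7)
My plan is to realize each operator in \eqref{2.22} as a composition of a resolvent power with a boundary trace. The ingredients are already in place: Lemma \ref{l2.2} shows that resolvent powers are bounded from $L^2(\Om; d^n x)$ into fractional Sobolev spaces on $\Om$, while \eqref{2.2} and \eqref{2.3} provide continuity of the Dirichlet and Neumann traces from suitable Sobolev spaces on $\Om$ into $L^2(\dOm; d^{n-1}\si)$. The proof thus reduces to a careful matching of Sobolev indices, supplemented by a short functional-calculus argument for large $\eps$.

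First I would dispose of the small-$\eps$ regime. For $\eps \in (0, 1]$, I take $q = (3+\eps)/4 \in (3/4, 1]$ in Lemma \ref{l2.2}, so that $(H_{0,\Om}^D - zI_\Om)^{-q}$ maps $L^2(\Om; d^n x)$ continuously into $H^{(3+\eps)/2}(\Om)$, and then feed this into \eqref{2.3} with $s = (1+\eps)/2 \in (1/2, 1] \subset (1/2, 3/2)$. An analogous choice $q = (1+\eps)/4 \in (1/4, 3/4)$ and $s = (1+\eps)/2 \in (1/2, 3/2)$, paired with \eqref{2.2}, handles $\ga_D (H_{0,\Om}^N - zI_\Om)^{-(1+\eps)/4}$ for $\eps \in (0, 2)$.

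Next, to reach arbitrary $\eps > 0$, I would invoke a functional-calculus factorization. Since $H_{0,\Om}^{D,N} \geq 0$ are self-adjoint and $z \notin [0,\infty)$, the spectral theorem guarantees that $(H_{0,\Om}^{D,N} - zI_\Om)^{-\alpha} \in \cB(L^2(\Om; d^n x))$ for every $\alpha \geq 0$. For $\eps > 1$, fix any $\eps_0 \in (0, 1]$ and split
\begin{equation*}
(H_{0,\Om}^D - zI_\Om)^{-(3+\eps)/4} = (H_{0,\Om}^D - zI_\Om)^{-(3+\eps_0)/4}\,(H_{0,\Om}^D - zI_\Om)^{-(\eps - \eps_0)/4};
\end{equation*}
the small-$\eps$ step treats the first factor while the second is bounded on $L^2(\Om; d^n x)$. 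An identical factorization, this time with a fixed $\eps_0 \in (0, 2)$ and $\eps \geq 2$, reduces the Neumann-resolvent/Dirichlet-trace case to the already-treated range.

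The only conceptual obstacle, and the one that forces the two-step approach, is the mismatch between the cap $q \leq 1$ in Lemma \ref{l2.2} and the upper endpoint $s < 3/2$ in the trace theorems \eqref{2.2}, \eqref{2.3}: for large $\eps$ one cannot directly plug the desired fractional power into Lemma \ref{l2.2}, nor would the resulting Sobolev index lie in the range $(1/2, 3/2)$ needed for the trace to land in $L^2(\dOm; d^{n-1}\si)$. The factorization step dissolves this obstruction without requiring any new analytic estimate.
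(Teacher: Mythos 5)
Your proposal is correct and follows essentially the same route the paper intends (and which it attributes to \cite[Lemma 6.9]{GLMZ05}): compose the fractional resolvent-power boundedness of Lemma \ref{l2.2} with the trace-operator boundedness in \eqref{2.2} and \eqref{2.3}. The one refinement you add—and it is a genuine one, since the paper's proof is a single citing sentence—is the explicit factorization step that reconciles the ranges $q\in[0,1]$ in Lemma \ref{l2.2} and $s\in(1/2,3/2)$ in the trace theorems with the unrestricted $\eps>0$ in the statement; without it, the direct composition only covers $\eps\in(0,1]$ in the Neumann-trace case and $\eps\in(0,2)$ in the Dirichlet-trace case. Since the fractional powers are defined by the spectral theorem, the identity $(H_{0,\Om}^{D,N}-zI_\Om)^{-a-b}=(H_{0,\Om}^{D,N}-zI_\Om)^{-a}(H_{0,\Om}^{D,N}-zI_\Om)^{-b}$ is indeed available, and the second factor is bounded on $\LOm$ for any $b\ge0$ and $z\in\bbC\backslash[0,\infty)$, so your two-step argument closes the gap cleanly.
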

%%%%%%%%%%%%%%%%%%%%%%%%%%%%%%%%%%%%

As in \cite[Lemma 6.9]{GLMZ05}, Lemma \ref{l2.4} follows from Lemma \ref{l2.2}
and from \eqref{2.2} and \eqref{2.3}.

%%%%%%%%%%%%%%%%%%%%%%%%%%%%%%%%%%%%
\begin{corollary} \lb{c2.5}
Assume Hypothesis \ref{h2.1} and let $f_1\in L^{p_1}(\Om;d^nx)$,
$p_1\geq 2$, $p_1>2n/3$, $f_2\in L^{p_2}(\Om;d^nx)$, $p_2 > 2n$,
and $z\in\bbC\backslash[0,\infty)$. Then, denoting by $f_1$ and
$f_2$ the operators of multiplication by functions $f_1$ and
$f_2$ in $\LOm$, respectively, one has
\begin{align}
\ol{\ga_D\big(H_{0,\Om}^N-zI_{\Om}\big)^{-1}f_1} &\in
\cB_{p_1}\big(L^2(\Om;d^nx),L^2(\dOm;d^{n-1}\si)\big), \lb{2.25}
\\
\ol{\ga_N\big(H_{0,\Om}^D-zI_{\Om}\big)^{-1}f_2} &\in
\cB_{p_2}\big(L^2(\Om;d^nx),L^2(\dOm;d^{n-1}\si)\big) \lb{2.26}
\end{align}
and for some $c_j(z)>0$ $($independent of $f_j$$)$, $j=1,2$,
\begin{align}
\Big\| \, \ol{\ga_D\big(H_{0,\Om}^N-zI_{\Om}\big)^{-1}f_1}\, \Big\|_
{\cB_{p_1}(L^2(\Om;d^nx),L^2(\dOm;d^{n-1}\si))} &\leq c_1(z)
\norm{f_1}_{L^{p_1}(\Om;d^nx)}, \lb{2.27}
\\
\Big\| \, \ol{\ga_N\big(H_{0,\Om}^D-zI_{\Om}\big)^{-1}f_2}\, \Big\|_
{\cB_{p_2}(L^2(\Om;d^nx),L^2(\dOm;d^{n-1}\si))} &\leq c_2(z)
\norm{f_2}_{L^{p_2}(\Om;d^nx)}. \lb{2.28}
\end{align}
\end{corollary}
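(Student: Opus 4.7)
The strategy is to split off from $f_j$ a fractional power of the corresponding resolvent, so as to reduce the two claims to a direct juxtaposition of Lemmas \ref{l2.3} and \ref{l2.4}. For the first assertion, fix $\eps>0$ (to be chosen below) and consider the factorization
\begin{equation}
\ga_D\bigl(H_{0,\Om}^N-zI_{\Om}\bigr)^{-1}f_1
= \Bigl[\ga_D\bigl(H_{0,\Om}^N-zI_{\Om}\bigr)^{-(1+\eps)/4}\Bigr]\,
\Bigl[\bigl(H_{0,\Om}^N-zI_{\Om}\bigr)^{-(3-\eps)/4}f_1\Bigr],
\end{equation}
valid on the dense domain $\{g\in\LOm\,|\,f_1 g\in\LOm\}$. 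By Lemma \ref{l2.4} the left factor belongs to $\cB(\LOm,\LdOm)$, while the right factor is formally adjoint to $\ol{f_1}\bigl(H_{0,\Om}^N-\ol z I_{\Om}\bigr)^{-(3-\eps)/4}$. The latter lies in $\cB_{p_1}(\LOm)$ by Lemma \ref{l2.3} applied with $q=(3-\eps)/4$, provided $p_1>n/(2q)=2n/(3-\eps)$; this inequality is secured for all sufficiently small $\eps>0$ by the hypothesis $p_1>2n/3$. Since $\cB_{p_1}$ is stable under adjoints, the right factor admits a closure in $\cB_{p_1}(\LOm)$, and composing on the left with the everywhere-defined bounded first factor places the closure of the entire product in $\cB_{p_1}(\LOm,\LdOm)$, as claimed in \eqref{2.25}. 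The norm bound \eqref{2.27} then follows by multiplying the estimates supplied by Lemma \ref{l2.3} (with an explicit $z$-dependent constant) and Lemma \ref{l2.4}.

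For the second assertion, the same reasoning applies to the factorization
\begin{equation}
\ga_N\bigl(H_{0,\Om}^D-zI_{\Om}\bigr)^{-1}f_2
= \Bigl[\ga_N\bigl(H_{0,\Om}^D-zI_{\Om}\bigr)^{-(3+\eps)/4}\Bigr]\,
\Bigl[\bigl(H_{0,\Om}^D-zI_{\Om}\bigr)^{-(1-\eps)/4}f_2\Bigr],
\end{equation}
where now Lemma \ref{l2.4} disposes of the left factor and Lemma \ref{l2.3} is invoked with $q=(1-\eps)/4$ on the adjoint of the right factor. The corresponding requirement $p_2>n/(2q)=2n/(1-\eps)$ is met for all sufficiently small $\eps>0$ because $p_2>2n$ by hypothesis, yielding \eqref{2.26} and \eqref{2.28}.

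The only subtlety I foresee is the bookkeeping associated with passing from the densely defined product on the right-hand side of each factorization to its closure, so as to identify it with the operator $\ol{\,\cdot\,}$ appearing in \eqref{2.25}--\eqref{2.26}. This step rests on the standard fact that a densely defined operator contained in a bounded operator is closable with closure equal to that bounded operator; since both factors in each factorization are bounded once the right factor has been closed, the product closes consistently and the estimates transfer without loss. No other genuine difficulty arises, as the remainder of the argument is a straightforward composition of the preceding trace ideal and boundedness results.
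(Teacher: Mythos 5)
Your argument is correct and is essentially the proof the paper has in mind: the paper disposes of Corollary \ref{c2.5} with a one-line remark that it ``follows from Lemmas \ref{l2.3} and \ref{l2.4}'' (referring to Corollary 6.10 of \cite{GLMZ05} for the details), and the factorization of $(H_{0,\Om}-zI_\Om)^{-1}$ into complementary fractional powers $-(1+\eps)/4$ and $-(3-\eps)/4$ (respectively $-(3+\eps)/4$ and $-(1-\eps)/4$), with Lemma \ref{l2.4} handling the boundary-trace factor and Lemma \ref{l2.3} handling the potential factor via its adjoint, is precisely the intended combination. The exponent bookkeeping — $p_1>2n/(3-\eps)$ versus $p_1>2n/3$, and $p_2>2n/(1-\eps)$ versus $p_2>2n$, both met for small $\eps>0$ — and the closure argument are both handled correctly.
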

%%%%%%%%%%%%%%%%%%%%%%%%%%%%%%%%%%%%

As in \cite[Corollary 6.10]{GLMZ05}, Corollary \ref{c2.5} follows from Lemmas \ref{l2.3} and \ref{l2.4}.

Finally, we turn to our assumptions on the potential $V$ and the corresponding definition of Dirichlet and Neumann Schr\"odinger operators $H^D_{\Om}$ and $H^N_{\Om}$ in $L^2(\Om; d^n x)$:

%%%%%%%%%%%%%%%%%%%%%%%%%%%%%%%%%%%%%
\begin{hypothesis} \lb{h2.6}
Suppose that $\Om$ satisfies Hypothesis \ref{h2.1}  and assume that
$V\in L^p(\Om;d^nx)$ for some $p$ satisfying $p>4/3$ in the case
$n=2$, and $p>n/2$ in the case $n\geq3$.
\end{hypothesis}
%%%%%%%%%%%%%%%%%%%%%%%%%%%%%%%%%%%%%

Assuming Hypothesis \ref{h2.6}, we next introduce the perturbed operators
$H_{\Om}^D$ and $H_{\Om}^N$ in $\LOm$ by alluding to abstract perturbation results summarized in Appendix \ref{sB} as follows: Let $V$, $u$, and $v$ denote
the operators of multiplication by functions $V$,
$u=\exp(i\arg(V))\abs{V}^{1/2}$, and $v=\abs{V}^{1/2}$ in
$L^{2}(\Om;d^nx)$, respectively. Since $u,v\in
L^{2p}(\Om;d^nx)$, Lemma \ref{l2.3} yields
\begin{align}
u\big(H_{0,\Om}^D-zI_{\Om}\big)^{-1/2}, \,
\ol{\big(H_{0,\Om}^D-zI_{\Om}\big)^{-1/2}v} &\in\cB_{2p}\big(\LOm\big),
\quad z\in\bbC\backslash [0,\infty), \lb{2.31}
\\
u\big(H_{0,\Om}^N-zI_{\Om}\big)^{-1/2}, \,
\ol{\big(H_{0,\Om}^N-zI_{\Om}\big)^{-1/2}v} &\in\cB_{2p}\big(\LOm\big),
\quad z\in\bbC\backslash [0,\infty), \lb{2.32}
\end{align}
and hence, in particular,
\begin{align}
& \dom(u)=\dom(v) \supseteq  H^{1}(\Om) \supset H^{2}(\Om)
\supset \dom\big(H^N_{0,\Om}\big), \\
& \dom(u)=\dom(v) \supseteq  H^{1}(\Om) \supseteq H^{1}_0(\Om)
\supset \dom\big(H^D_{0,\Om}\big).
\end{align}
Thus, operators $H_{0,\Om}^D$, $H_{0,\Om}^N$, $u$, and $v$
satisfy Hypothesis \ref{hB.1}\,$(i)$. Moreover, \eqref{2.31} and
\eqref{2.32} imply
\begin{equation}
\ol{u\big(H_{0,\Om}^D-zI_{\Om}\big)^{-1}v}, \,
\ol{u\big(H_{0,\Om}^N-zI_{\Om}\big)^{-1}v} \in\cB_p\big(\LOm\big),
\quad z\in\bbC\backslash [0,\infty),  \lb{2.35}
\end{equation}
which verifies Hypothesis \ref{hB.1}\,$(ii)$ for $H_{0,\Om}^D$ and
$H_{0,\Om}^N$. Utilizing \eqref{2.8} in Lemma \ref{l2.3} with
$-z>0$ sufficiently large, such that the $\cB_{2p}$-norms of the
operators in \eqref{2.31} and \eqref{2.32} are less than 1, and hence
the $\cB_p$-norms of the operators in \eqref{2.35} are less than 1,
one also verifies Hypothesis \ref{hB.1}\,$(iii)$. Thus, applying
Theorem \ref{tB.2} one obtains the densely defined, closed operators
$H_{\Om}^D$ and $H_{\Om}^N$ (which are extensions of $H_{0,\Om}^D+V$
on $\dom\big(H_{0,\Om}^D\big)\cap\dom(V)$ and $H_{0,\Om}^N+V$ on
$\dom\big(H_{0,\Om}^N\big)\cap\dom(V)$, respectively). In particular,
the resolvent of $H_{\Om}^D$ (respectively, $H_{\Om}^N$) is explicitly given
by the analog of \eqref{B.5} in terms of the resolvent of $H_{0,
\Om}^D$ (respectively, $H_{0, \Om}^N$) and the factorization $V=uv$.

We note in passing that \eqref{2.6}--\eqref{2.8}, \eqref{2.22},
\eqref{2.25}--\eqref{2.28}, \eqref{2.31}, \eqref{2.32},
\eqref{2.35}, etc., extend of course to all $z$ in the resolvent
set of the corresponding operators $H_{0,\Om}^D$ and
$H_{0,\Om}^N$.

%%%%%%%%%%%%%%%%%%%%%%%%%%%%%%%%%%%%%%%
%%%%%%%%%%%%%%%%%%%%%%%%%%%%%%%%%%%%%%%
\section{Dirichlet and Neumann boundary value problems \\
and Dirichlet-to-Neumann maps} \label{s3}
%%%%%%%%%%%%%%%%%%%%%%%%%%%%%%%%%%%%%%%
%%%%%%%%%%%%%%%%%%%%%%%%%%%%%%%%%%%%%%%

This section is devoted to Dirichlet and Neumann boundary value problems associated with the Helmholtz differential expression $-\Delta - z$ as well as the corresponding differential expression $-\Delta + V - z$ in the presence of a potential $V$, both in connection with the open set $\Omega$. In addition, we provide a detailed discussion of Dirichlet-to-Neumann, $M^D_{0,\Om}$, $M^D_{\Om}$, and Neumann-to-Dirichlet maps,
$M^N_{0,\Om}$, $M^N_{\Om}$, in $L^2(\partial\Om; d^{n-1}\sigma)$.

Denote by
 \begin{equation}
 \wti\ga_N : \big\{u\in H^1(\Om) \,\big|\, \Delta u \in
\big(H^1(\Om)\big)^*\big\} \to H^{-1/2}(\dOm)  \lb{3.0}
\end{equation}
a weak Neumann trace operator defined by
\begin{align} \lb{3.1a}
\langle\wti\ga_N u, \phi\rangle = \int_\Om d^n x \, \nabla
u(x)\cdot\nabla \Phi(x)  + \langle\Delta u, \Phi\rangle
\end{align}
for all $\phi\in H^{1/2}(\dOm)$ and $\Phi\in H^1(\Om)$ such that
$\ga_D\Phi = \phi$. We note that this definition is
independent of the particular extension $\Phi$ of $\phi$, and that
$\wti\ga_N$ is a bounded extension of the Neumann trace operator
$\ga_N$ defined in \eqref{2.3}. For more details we refer to
equations \eqref{A.11}--\eqref{A.16}.

We start with the Helmholtz Dirichlet and Neumann boundary value problems:

%%%%%%%%%%%%%%%%%%%%%%%%%%%%%%%%%%%%%%%
\begin{theorem} \lb{t3.1}
Suppose $\Om$ is an open Lipschitz domain with a
compact nonempty boundary $\dOm$. Then for every $f \in
H^1(\dOm)$ and $z\in\bbC\big\backslash\si\big(H_{0,\Om}^D\big)$ the
following Dirichlet boundary value problem,
\begin{align} \lb{3.1}
\begin{cases}
(-\Delta - z)u_0^D = 0 \text{ on }\, \Om, \quad u_0^D \in H^{3/2}(\Om), \\
\ga_D u_0^D = f \text{ on }\, \dOm,
\end{cases}
\end{align}
has a unique solution $u_0^D$ satisfying $\wti\ga_N u_0^D \in
\LdOm$. Moreover, there exist constants $C^D=C^D(\Omega,z)>0$ such that
\begin{equation}
\|u_0^D\|_{H^{3/2}(\Omega)} \leq C^D \|f\|_{H^1(\partial\Omega)}.  \lb{3.3a}
\end{equation}
Similarly, for every $g\in\LdOm$ and
$z\in\bbC\backslash\si\big(H_{0,\Om}^N\big)$ the following Neumann
boundary value problem,
\begin{align} \lb{3.2}
\begin{cases}
(-\Delta - z)u_0^N = 0 \text{ on }\,\Om,\quad u_0^N \in H^{3/2}(\Om), \\
\wti\ga_N u_0^N = g\text{ on }\,\dOm,
\end{cases}
\end{align}
has a unique solution  $u_0^N$. Moreover, there exist constants $C^N=C^N(\Omega,z)>0$ such that
\begin{equation}
\|u_0^N\|_{H^{3/2}(\Omega)} \leq C^N \|g\|_{\LdOm}.  \lb{3.4a}
\end{equation}
In addition,  \eqref{3.1}--\eqref{3.4a} imply that the following maps are bounded  
\begin{align}
\big[\ga_N\big(\big(H^D_{0,\Om}-zI_\Om\big)^{-1}\big)^*\big]^* &: H^1(\dOm) \to
H^{3/2}(\Om), \quad z\in\bbC\big\backslash\si\big(H^D_{0,\Om}\big),   \lb{3.4b}
\\
\big[\ga_D \big(\big(H^N_{0,\Om}-zI_\Om\big)^{-1}\big)^*\big]^* &: \LdOm \to
H^{3/2}(\Om), \quad z\in\bbC\big\backslash\si\big(H^N_{0,\Om}\big).   \lb{3.4c}
\end{align}
Finally, the solutions $u_0^D$ and $u_0^N$ are given by the formulas
\begin{align}
u_0^D (z) &= -\big(\ga_N \big(H_{0,\Om}^D-\ol{z}I_\Om\big)^{-1}\big)^*f,
\lb{3.3}
\\
u_0^N (z) &= \big(\ga_D \big(H_{0,\Om}^N-\ol{z}I_\Om\big)^{-1}\big)^*g. 
\lb{3.4}
\end{align}
\end{theorem}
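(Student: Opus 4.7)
The strategy divides into three pieces: (a) produce $u_0^D$ and $u_0^N$ together with the bounds \eqref{3.3a}, \eqref{3.4a} by invoking the classical $H^{3/2}$-theory for the Dirichlet, respectively $L^2$-theory for the Neumann, problem on Lipschitz domains; (b) derive the representation formulas \eqref{3.3}, \eqref{3.4} from the weak Green identity implicit in \eqref{3.1a}; (c) read off the boundedness statements \eqref{3.4b}, \eqref{3.4c} from the preceding two steps.

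\emph{Dirichlet problem.} For $f\in H^1(\dOm)$, the classical $H^{3/2}$-Dirichlet theorem for Lipschitz domains---together with its refinements available under each of the three alternatives in Hypothesis \ref{h2.1}, as recorded in Appendix \ref{sA}---yields a solution $u_0^D\in H^{3/2}(\Om)$ of \eqref{3.1} satisfying both the estimate \eqref{3.3a} and the fine trace regularity $\wti\ga_N u_0^D\in\LdOm$. Uniqueness under $z\in\rho(H_{0,\Om}^D)$ is elementary: any $u\in H^{3/2}(\Om)$ with $(-\De-z)u=0$ and $\ga_D u=0$ lies in the form domain $H^1_0(\Om)$ of $H_{0,\Om}^D$ (by the Lipschitz trace characterization of $H^1_0$), the distributional identity $-\De u=zu\in\LOm$ then forces $u\in\dom(H_{0,\Om}^D)$ with $(H_{0,\Om}^D-zI_\Om)u=0$, and $z\in\rho(H_{0,\Om}^D)$ gives $u=0$. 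To establish \eqref{3.3}, fix $g\in\LOm$ arbitrarily, set $h:=(H_{0,\Om}^D-\ol z I_\Om)^{-1}g\in\dom(H_{0,\Om}^D)\subset H^2(\Om)$ (so that $\ga_D h=0$), and apply the bilinear Green-type identity obtained by symmetrizing \eqref{3.1a} in the pair $(u_0^D,\ol h)$; the boundary contribution with factor $\ga_D\ol h=0$ drops out and what remains collapses to
\begin{equation*}
(u_0^D,g)_{\LOm}=-(f,\ga_N h)_{\LdOm}=-\big(f,\ga_N(H_{0,\Om}^D-\ol z I_\Om)^{-1}g\big)_{\LdOm}.
\end{equation*}
Since $g\in\LOm$ is arbitrary, this is exactly \eqref{3.3}.

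\emph{Neumann problem and mappings.} The Neumann problem is handled along the same lines, now using the $L^2$-Neumann theory on Lipschitz domains to produce $u_0^N\in H^{3/2}(\Om)$ satisfying \eqref{3.4a}; the analogous Green identity computation, this time with $h':=(H_{0,\Om}^N-\ol z I_\Om)^{-1}g'$ (so that $\ga_N h'=0$ while $\wti\ga_N u_0^N=g$), gives \eqref{3.4}. The boundedness assertions \eqref{3.4b} and \eqref{3.4c} then follow at once by combining \eqref{3.3}, \eqref{3.4} with the norm bounds \eqref{3.3a}, \eqref{3.4a}. The principal technical obstacle in this scheme is the fine regularity statement $\wti\ga_N u_0^D\in\LdOm$ for the Dirichlet problem, which is strictly stronger than the generic $H^{-1/2}(\dOm)$-bound afforded by \eqref{3.0}---it is essentially the content of the Jerison--Kenig $H^{3/2}$-Dirichlet theorem and is precisely where each of the three alternatives in Hypothesis \ref{h2.1} (via the available non-tangential maximal function bounds, Rellich identities, or layer-potential machinery summarized in Appendix \ref{sA}) is used in its full strength.
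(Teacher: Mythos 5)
Your proof follows the same architecture as the paper's: existence and the $H^{3/2}$-bound come from the $L^2$-solvability theory for the Helmholtz Dirichlet/Neumann problems on Lipschitz domains (in the paper, Theorem~9.3 of \cite{Mi96}) combined with the Jerison--Kenig equivalence $\cN(\nabla u)\in\LdOm\iff u\in H^{3/2}(\Om)$; the representations \eqref{3.3}, \eqref{3.4} come from Green's formula applied against $h=(H_{0,\Om}^{D}-\ol z I_\Om)^{-1}g$ (resp.\ the Neumann analogue), where one boundary term vanishes because $h\in\dom(H_{0,\Om}^D)$ forces $\ga_D h=0$; and \eqref{3.4b}, \eqref{3.4c} are read off. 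Your Green's identity computation is correct and matches the paper's (the interior terms also cancel because $(-\De-z)u_0^D=0$, which you left implicit but is fine), and your explicit uniqueness argument via $u\in H^1_0(\Om)$, $\De u\in\LOm\Rightarrow u\in\dom(H_{0,\Om}^D)$ is a legitimate addition.

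One small inaccuracy in emphasis: you repeatedly tie the fine regularity $\wti\ga_N u_0^D\in\LdOm$ to ``each of the three alternatives in Hypothesis~\ref{h2.1},'' but Theorem~\ref{t3.1} is stated only for Lipschitz domains with compact boundary and the paper does not invoke Hypothesis~\ref{h2.1} in its proof. The NTMF estimates of \cite{Mi96} and the Jerison--Kenig equivalence of \cite{JK95} hold for general Lipschitz domains; the stronger alternatives in Hypothesis~\ref{h2.1} ($C^{1,r}$, convexity, UEBC) are needed elsewhere, for the $H^2$-regularity of Lemma~\ref{lA.1}, not here. You also compress the paper's intermediate step of passing from the NTMF formulation to $H^{3/2}$ via the fundamental-solution reduction and the separate treatment of the unbounded case, but your identification of the Jerison--Kenig theorem as the crux is correct.
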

%%%%%%%%%%%%%%%%%%%%%%%%%%%%%%%%%%%%%%%
\begin{proof}
It follows from Theorem 9.3 in \cite{Mi96} that the boundary
value problems,
\begin{align}
&\begin{cases} \lb{3.5}
(\Delta + z)u_0^D = 0 \text{ on } \Om, \quad \cN(\nabla u_0^D)\in\LdOm, \\
\ga_D u_0^D = f\in H^1(\dOm) \text{ on } \dOm
\end{cases}
\intertext{and}
&\begin{cases} \lb{3.6}
(\Delta + z)u_0^N = 0 \text{ on } \Om, \quad \cN(\nabla u_0^N)\in\LdOm, \\
\wti\ga_N u_0^N = g\in\LdOm \text{ on } \dOm,
\end{cases}
\end{align}
have unique solutions for all $z\in\bbC\backslash\si\big(H_{0,\Om}^D\big)$
and $z\in\bbC\backslash\si\big(H_{0,\Om}^N\big)$, respectively, satisfying natural estimates. Here $\cN(\cdot)$ denotes the non-tangential maximal function (cf.\ \cite{JK95}, \cite{Mi96})
\begin{equation}
(\cN w)(x) = \sup_{y\in\Gamma(x)}|w(y)|, \quad x\in\partial\Om, \end{equation}
where $w$ is a locally bounded function and $\Gamma(x)$ is a nontangential approach region with vertex at $x$, that is, for some fixed constant $C>1$  one has
\begin{equation}
\Gamma(x)=\{ y\in\Om \;|\; |x-y| < C \, {\rm dist}(y,\partial\Om) \}.
\end{equation}

In the case of a bounded domain $\Om$, it follows from Corollary
5.7 in \cite{JK95} that for any harmonic function $v$ in $\Om$,
\begin{align}
\cN(\nabla v)\in\LdOm \, \text{ if and only if } \, v\in H^{3/2}(\Om),   \lb{3.7}
\end{align}
accompanied with natural estimates. For any solution $u$ of the Helmholtz equation
$(\Delta+z)u=0$ on a bounded domain $\Om$, one can introduce the 
harmonic function 
\begin{equation}
v(x)=u(x)+z\int_\Om d^n y\, E_n(x-y) u(y), \quad  x\in\Om, 
\end{equation}
such that $\cN(\nabla u)\in\LdOm$ if and only if
$\cN(\nabla v)\in\LdOm$, and $u\in H^{3/2}(\Om)$ if and only if $v\in H^{3/2}(\Om)$. (Again, 
natural estimates are valid in each case.)  
Here $E_n$ denotes the fundamental solution of the Laplace equation in $\bbR^n$, 
$n\in\bbN$, $n\geq 2$, 
\begin{equation}
E_n(x) =\begin{cases} \f{1}{2\pi} \ln(|x|), & n=2, \\ 
\f{1}{n(2-n)\omega_{n-1}}|x|^{2-n}, & n\geq 3, \end{cases}, 
\quad x\in\bbR^n\backslash\{0\}, 
\end{equation}
with $\omega_{n-1}$ denoting the area of the unit sphere in $\bbR^n$. 
The equivalence in \eqref{3.7} extends from
harmonic functions to all functions $u$ satisfying the Helmholtz
equation, $(\Delta+z)u=0$ on a bounded domain $\Om$,
\begin{align}
\cN(\nabla u)\in\LdOm \, \text{ if and only if } \, u\in H^{3/2}(\Om). \lb{3.8}
\end{align}
Thus, in the case of a bounded domain $\Om$, \eqref{3.1} and
\eqref{3.2} follow from \eqref{3.5}, \eqref{3.6}, and
\eqref{3.8}. Moreover, one has the chain of estimates
\begin{equation}
\|u^D_0\|_{H^{3/2}(\Omega)} \leq C_1 \big[\big\|\cN\big(\nabla u^D_0\big)\big\|_{\LdOm} 
+ \|u^D_0\|_{L^2(\Om; d^n x)}\big] \leq C_2 \|f\|_{H^1(\LdOm)}
\end{equation}
for some constants $C_k>0$, $k=1,2$. In the case of an unbounded domain 
$\Om$, one first
obtains \eqref{3.8} for $\Om\cap B$, where $B$ is a sufficiently
large ball containing $\dOm$. Then, since
$z\in\bbC\big\backslash\si\big(H_{0,\Om}^D\big) =
\bbC\big\backslash\si\big(H_{0,\Om}^N\big) = \bbC\backslash[0,\infty)$ (since now
$\Om$ contains the exterior of a ball in $\bbR^n$), one
exploits the exponential decay of solutions of the Helmholtz
equation to extend \eqref{3.8} from $\Om\cap B$ to $\Om$. This,
together with \eqref{3.5} and \eqref{3.6}, yields \eqref{3.1} and
\eqref{3.2}.

Next, we turn to the proof of \eqref{3.3} and \eqref{3.4}. We note that by 
Lemma \ref{l2.4},
\begin{align}
\ga_N \big(H_{0,\Om}^D-\ol{z}I_\Om\big)^{-1}, \ga_D
\big(H_{0,\Om}^N-\ol{z}I_\Om\big)^{-1} \in \cB\big(\LOm,\LdOm\big),
\end{align}
and hence
\begin{align}
\big(\ga_N \big(H_{0,\Om}^D-\ol{z}I_\Om\big)^{-1}\big)^*, \big(\ga_D
\big(H_{0,\Om}^N-\ol{z}I_\Om\big)^{-1}\big)^* \in \cB\big(\LdOm,\LOm\big).  \lb{3.21a}
\end{align}
Then, denoting by $u_0^D$ and $u_0^N$ the unique solutions of
\eqref{3.1} and \eqref{3.2}, respectively, and using Green's
formula, one computes
\begin{align}
\big(u_0^D,v\big)_{\LOm} &=
\big(u_0^D,(-\Delta-\ol{z})\big(H_{0,\Om}^D-\ol{z}I_\Om\big)^{-1}v\big)_{\LOm}
\no
\\ &=
\big((-\Delta-z)u_0^D,\big(H_{0,\Om}^D-\ol{z}I_\Om\big)^{-1}v\big)_{\LOm}
\no
\\ &\quad +
\big(\wti\ga_N u_0^D, \ga_D
\big(H_{0,\Om}^D-\ol{z}I_\Om\big)^{-1}v\big)_{\LdOm}  \no
\\ &\quad -
\big(\ga_D u_0^D, \ga_N
\big(H_{0,\Om}^D-\ol{z}I_\Om\big)^{-1}v\big)_{\LdOm} \no
\\ &=
-\big(f, \ga_N \big(H_{0,\Om}^D-\ol{z}I_\Om\big)^{-1}v\big)_{\LdOm} \no
\\ &=
-\big(\big(\ga_N \big(H_{0,\Om}^D-\ol{z}I_\Om\big)^{-1}\big)^*f,v\big)_{\LOm}
\end{align}
and
\begin{align}
\big(u_0^N,v\big)_{\LOm} &=
\big(u_0^N,(-\Delta-\ol{z})\big(H_{0,\Om}^N-\ol{z}I_\Om\big)^{-1}v\big)_{\LOm}
\no
\\ &=
\big((-\Delta-z)u_0^N,\big(H_{0,\Om}^N-\ol{z}I_\Om\big)^{-1}v\big)_{\LOm}
\no
\\ &\quad +
\big(\wti\ga_N u_0^N, \ga_D
\big(H_{0,\Om}^N-\ol{z}I_\Om\big)^{-1}v\big)_{\LdOm}  \no
\\ &\quad -
\big(\ga_D u_0^N, \ga_N
\big(H_{0,\Om}^N-\ol{z}I_\Om\big)^{-1}v\big)_{\LdOm} \no
\\ &=
\big(g, \ga_D \big(H_{0,\Om}^N-\ol{z}I_\Om\big)^{-1}v\big)_{\LdOm} \no
\\ &=
\big(\big(\ga_D \big(H_{0,\Om}^N-\ol{z}I_\Om\big)^{-1}\big)^*g,v\big)_{\LOm}
\end{align}
for any $v\in\LOm$. This proves \eqref{3.3} and \eqref{3.4} with the operators involved understood in the sense of \eqref{3.21a}. Granted \eqref{3.3a} and \eqref{3.4a}, one finally obtains \eqref{3.4b} and \eqref{3.4c}.
\end{proof}
%%%%%%%%%%%%%%%%%%%%%%%%%%%%%%%%%%%%%%%

We temporarily strengthen our hypothesis on $V$ and introduce the following assumption:

%%%%%%%%%%%%%%%%%%%%%%%%%%%%%%%%%%%%%%%
\begin{hypothesis} \lb{h3.2}
Suppose the set $\Om$ satisfies Hypothesis \ref{h2.1} and assume that
$V\in L^p(\Om;d^nx)$ for some $p>2$ if $n=2,3$ and $p\geq 2n/3$ if $n\geq4$.
\end{hypothesis}
%%%%%%%%%%%%%%%%%%%%%%%%%%%%%%%%%%%%%%%

By employing a perturbative approach, we now extend Theorem \ref{t3.1} in connection with the Helmholtz differential expression $-\Delta - z$ on
$\Om$ to the case of a Schr\"odinger  differential expression  $-\Delta + V - z$ on $\Om$.

%%%%%%%%%%%%%%%%%%%%%%%%%%%%%%%%%%%%%%%
\begin{theorem} \lb{t3.3}
Assume Hypothesis \ref{h3.2}. Then for every $f \in
H^1(\dOm)$ and
$z\in\bbC\big\backslash\si\big(H_{\Om}^D\big)$
the following Dirichlet boundary value problem,
\begin{align} \lb{3.9}
\begin{cases}
(-\Delta + V - z)u^D = 0 \text{ on }\, \Om, \quad u^D \in H^{3/2}(\Om), \\
\ga_D u^D = f \text{ on }\, \dOm,
\end{cases}
\end{align}
has a unique solution $u^D$ satisfying $\wti\ga_N u^D \in
\LdOm$. Moreover, there exist constants $C^D=C^D(\Omega,z)>0$ such that
\begin{equation}
\|u^D\|_{H^{3/2}(\Omega)} \leq C^D \|f\|_{H^1(\partial\Omega)}.  \lb{3.9a}
\end{equation}
Similarly, for every $g\in\LdOm$ and
$z\in\bbC\big\backslash\si\big(H_{\Om}^N\big)$
the following Neumann boundary value problem,
\begin{align} \lb{3.10}
\begin{cases}
(-\Delta + V - z)u^N = 0 \text{ on }\, \Om,\quad u^N \in H^{3/2}(\Om), \\
\wti\ga_N u^N = g\text{ on }\, \dOm,
\end{cases}
\end{align}
has a unique solution  $u^N$. Moreover, there exist constants $C^N=C^N(\Omega,z)>0$ such that
\begin{equation}
\|u^N\|_{H^{3/2}(\Omega)} \leq C^N \|g\|_{\LdOm}.  \lb{3.10a}
\end{equation}
In addition,  \eqref{3.9}--\eqref{3.10a} imply that the following maps are bounded  
\begin{align}
\big[\ga_N\big(\big(H^D_{\Om}-zI_\Om\big)^{-1}\big)^*\big]^* &: H^1(\dOm) \to
H^{3/2}(\Om), \quad z\in\bbC\big\backslash\si\big(H^D_{0,\Om}\big),   \lb{3.10b}
\\
\big[\ga_D \big(\big(H^N_{\Om}-zI_\Om\big)^{-1}\big)^*\big]^* &: \LdOm \to
H^{3/2}(\Om), \quad z\in\bbC\big\backslash\si\big(H^N_{0,\Om}\big).   \lb{3.10c}
\end{align}
Finally, the solutions $u^D$ and $u^N$ are given by the formulas
\begin{align}
u^D (z) &= -\big[\ga_N \big(\big(H_{\Om}^D-zI_\Om\big)^{-1}\big)^*\big]^*f,
\lb{3.11}
\\
u^N (z) &= \big[\ga_D \big(\big(H_{\Om}^N-zI_\Om\big)^{-1}\big)^*\big]^*g.
\lb{3.12}
\end{align}
\end{theorem}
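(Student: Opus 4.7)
The plan is to carry out a perturbative reduction from Theorem~\ref{t3.1}. Rewrite $(-\Delta+V-z)u=0$ on $\Om$ as $(-\Delta-z)u=-Vu$, and solve it as a fixed point involving the Helmholtz solution operator. The stronger Hypothesis~\ref{h3.2} enters solely to guarantee that pointwise multiplication by $V$ is a bounded map
\begin{equation*}
V\cdot:\,H^{3/2}(\Om)\longrightarrow L^2(\Om),
\end{equation*}
a consequence of the Sobolev embeddings $H^{3/2}(\Om)\hookrightarrow L^\infty(\Om)$ ($n=2$), $H^{3/2}(\Om)\hookrightarrow L^q(\Om)$ for all $q<\infty$ ($n=3$), and $H^{3/2}(\Om)\hookrightarrow L^{2n/(n-3)}(\Om)$ ($n\ge 4$), combined with H\"older's inequality and the thresholds $p>2$ or $p\ge 2n/3$ in Hypothesis~\ref{h3.2}.

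\emph{Dirichlet case: existence and boundary condition.} Given $f\in H^1(\dOm)$ and $z\in\bbC\backslash\si(H_{\Om}^D)$, let $u_0^D\in H^{3/2}(\Om)$ be the Helmholtz Dirichlet solution from Theorem~\ref{t3.1}, and define
\begin{equation*}
u^D := u_0^D - (H_{\Om}^D - zI_\Om)^{-1}(Vu_0^D),
\end{equation*}
which is meaningful since $Vu_0^D\in L^2(\Om)$ by the preliminary embedding. Under Hypothesis~\ref{h3.2}, $V$ is relatively form-bounded with bound zero with respect to $H_{0,\Om}^D$, so the abstract construction of Appendix~\ref{sB} produces $H_{\Om}^D$ having form domain $H_0^1(\Om)$; consequently the correction $w:=(H_{\Om}^D - zI_\Om)^{-1}(Vu_0^D)\in\dom(H_{\Om}^D)\subset H_0^1(\Om)$ has vanishing Dirichlet trace, so $\ga_D u^D = \ga_D u_0^D=f$. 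The distributional identity $(-\Delta+V-z)(H_{\Om}^D-zI_\Om)^{-1}g = g$ applied to $g=Vu_0^D$, together with $(-\Delta-z)u_0^D=0$, then delivers $(-\Delta+V-z)u^D=0$ on $\Om$.

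\emph{Regularity, uniqueness, and the formula.} Starting from $w\in H_0^1(\Om)$, the pointwise identity
\begin{equation*}
-\Delta w = zw - Vw + Vu_0^D
\end{equation*}
has right-hand side in $L^{r_1}(\Om)$ for some $r_1>1$ determined by Sobolev--H\"older (applied to $Vw$ using $V\in L^p$ and $w\in H^1\hookrightarrow L^{q_1}$). A short bootstrap based on Dirichlet elliptic regularity for the Lipschitz domain $\Om$ then promotes $w$ to $H^{3/2}(\Om)$ and yields the quantitative bound \eqref{3.9a}. Uniqueness is immediate: any $v\in H^{3/2}(\Om)$ with $\ga_D v=0$ and $(-\Delta+V-z)v=0$ satisfies $v\in H_0^1(\Om)$ and $Vv\in L^2(\Om)$, so $v\in\dom(H_\Om^D)$ and $(H_\Om^D-zI_\Om)v=0$, forcing $v=0$. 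Finally, \eqref{3.11} is obtained by pairing $u^D$ in $\LOm$ against $((H_\Om^D-zI_\Om)^{-1})^*v = ((H_\Om^D)^*-\bar z I_\Om)^{-1}v$ for $v\in\LOm$ and applying Green's identity as in Theorem~\ref{t3.1}; a short computation shows that the two $V$-contributions cancel (since the adjoint converts $V$ to $\bar V$ and the complex conjugation built into the $\LOm$-inner product flips it back), leaving only the boundary term $-(f,\wti\ga_N w)_{\LdOm}$, which upon dualizing produces \eqref{3.11}; the bound \eqref{3.10b} is then read off from \eqref{3.9a}.

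The Neumann case is entirely parallel, with $\wti\ga_N$, $H_\Om^N$, $H_{0,\Om}^N$ replacing $\ga_D$, $H_\Om^D$, $H_{0,\Om}^D$ throughout, and using the Neumann half of Theorem~\ref{t3.1} and of the resolvent identity of Appendix~\ref{sB}. The main technical obstacle is the elliptic bootstrap delivering the $H^{3/2}$-regularity of the correction $w$: the bootstrap exponent $r_1$ is borderline-critical when $p$ is at the minimal value allowed by Hypothesis~\ref{h3.2} (namely $p=2n/3$ for large $n$), so the regularity step must be executed within the class of Lipschitz domains permitted by Hypothesis~\ref{h2.1}, relying on the sharp Dirichlet/Neumann elliptic estimates recorded in Section~\ref{s2}.
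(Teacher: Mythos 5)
Your overall strategy (the ansatz $u^D = u_0^D - (H_\Om^D-zI_\Om)^{-1}Vu_0^D$, with $Vu_0^D\in\LOm$ via the Sobolev embedding of $H^{3/2}(\Om)$) is exactly the paper's, but the way you establish the trace condition and the $H^{3/2}$-regularity of the correction $w=(H_\Om^D-zI_\Om)^{-1}Vu_0^D$ has a genuine gap. You assert that the Appendix \ref{sB} construction gives $H_\Om^D$ a form domain $H_0^1(\Om)$ (hence $\ga_D w=0$) and then propose an elliptic bootstrap starting from $-\Delta w = zw-Vw+Vu_0^D$ with right-hand side only in some $L^{r_1}(\Om)$, $r_1>1$, to reach $w\in H^{3/2}(\Om)$. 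Neither step is supported: Appendix \ref{sB} is Kato's resolvent-based construction for factored perturbations, not a KLMN form sum (and $V$ may be complex-valued and merely $L^p$), so no containment $\dom(H_\Om^D)\subset H_0^1(\Om)$ is available; and the regularity results under Hypothesis \ref{h2.1} (Lemma \ref{lA.1}, Lemma \ref{l2.2}) only give $\dom\big(H^{D,N}_{0,\Om}\big)\subset H^2(\Om)$ and $H^{2q}$-mapping properties for \emph{$L^2$ data} — there are no sub-$L^2$ elliptic estimates on Lipschitz (or UEBC/convex) domains "recorded in Section \ref{s2}" that would drive your bootstrap, and indeed with $p=2n/3$ one only gets $Vw\in L^r$ with $r<2$, so the bootstrap cannot get off the ground with the tools at hand. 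The paper avoids all of this with one identity you are missing: by \eqref{B.5} and Theorem \ref{tB.3},
\begin{equation*}
\big(H_\Om^D-zI_\Om\big)^{-1}Vu_0^D
=\big(H_{0,\Om}^D-zI_\Om\big)^{-1}\big[I_\Om+V\big(H_{0,\Om}^D-zI_\Om\big)^{-1}\big]^{-1}Vu_0^D ,
\end{equation*}
where the bracketed inverse is bounded on $\LOm$ (Lemma \ref{l2.3} plus the Birman--Schwinger principle); hence $w\in\dom\big(H_{0,\Om}^D\big)\subset H^2(\Om)\subset H^{3/2}(\Om)$, $\ga_D w=0$ by \eqref{2.4}, and the estimate \eqref{3.9a} follows at once — regularity, trace condition, and bound in a single stroke, with the obvious Neumann analogue.

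Two further points. First, your construction uses $u_0^D$ (resp.\ $u_0^N$) from Theorem \ref{t3.1} and therefore tacitly requires $z\notin\si\big(H_{0,\Om}^D\big)$ (resp.\ $z\notin\si\big(H_{0,\Om}^N\big)$), whereas the statement only excludes $\si\big(H_\Om^D\big)$ (resp.\ $\si\big(H_\Om^N\big)$); since $V$ need not be real, these spectra can differ, and the extra restriction must be removed — the paper does this by analytic continuation in $z$ of the representation formulas \eqref{3.11}, \eqref{3.12}, a step your proposal omits. Second, your derivation of \eqref{3.11} via a claimed cancellation of the two $V$-contributions under complex conjugation is vague; the paper instead takes adjoints in the second resolvent identity, $\big[I_\Om-\big(H_\Om^D-zI_\Om\big)^{-1}V\big]\big[\ga_N\big(\big(H_{0,\Om}^D-zI_\Om\big)^{-1}\big)^*\big]^* = \big[\ga_N\big(\big(H_\Om^D-zI_\Om\big)^{-1}\big)^*\big]^*$, which gives \eqref{3.11} and \eqref{3.12} directly from \eqref{3.3}, \eqref{3.4} and the ansatz.
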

%%%%%%%%%%%%%%%%%%%%%%%%%%%%%%%%%%%%%%%
\begin{proof}
We temporarily assume that
$z\in\bbC\big\backslash\big(\si\big(H_{0,\Om}^D\big)\cup\si\big(H_{\Om}^D\big)\big)$ in the case of the Dirichlet problem and
$z\in\bbC\big\backslash\big(\si\big(H_{0,\Om}^N\big)\cup\si\big(H_{\Om}^N\big)\big)$ in the context of the Neumann problem. Uniqueness of solutions follows from the fact that $z\notin\si(H_\Om^D)$ and $z\notin\si(H_\Om^N)$, respectively.

Next, we will show that the functions
\begin{align}
u^D (z) &= u_0^D (z) - \big(H_\Om^D-zI_\Om\big)^{-1} V u_0^D (z), \lb{3.13}
\\
u^N (z)&= u_0^N (z) - \big(H_\Om^N-zI_\Om\big)^{-1} V u_0^N (z),  \lb{3.14}
\end{align}
with $u_0^D, u_0^N$ given by Theorem \ref{t3.1}, satisfy \eqref{3.11}
and \eqref{3.12}, respectively. Indeed, it follows from Theorem
\ref{t3.1} that $u_0^D,u_0^N\in H^{3/2}(\Om)$ and $\wti\ga_N u_0^D
\in \LdOm$. Using the Sobolev embedding theorem
\begin{align*}
H^{3/2}(\Om) \hookrightarrow L^q(\Om;d^nx) \text{ for all } q\geq2
\text{ if } n=2,3 \text{ and } 2\leq q \leq 2n/(n-3) \text{ if }
n\geq4,
\end{align*}
and the fact that $V\in L^p(\Om;d^nx)$, $p>2$ if $n=2,3$ and
$p\geq 2n/3$ if $n\geq4$, one concludes that $Vu_0^D,
Vu_0^N\in\LOm$, and hence \eqref{3.13} and \eqref{3.14} are 
well-defined. Moreover, it follows from Lemma \ref{l2.3} that
$V\big(H_{0,\Om}^D-zI_\Om\big)^{-1}$,
$V\big(H_{0,\Om}^N-zI_\Om\big)^{-1}\in\cB_p\big(\LOm\big)$, and hence
\begin{align}
\big[I_{\Om}+V\big(H_{0,\Om}^D-zI_\Om\big)^{-1}\big]^{-1} \in \cB\big(\LOm\big), \quad
z\in\bbC\big\backslash\big(\si\big(H_{0,\Om}^D\big)\cup\si\big(H_{\Om}^D\big)\big),
\lb{3.15}
\\
\big[I_{\Om}+V\big(H_{0,\Om}^N-zI_\Om\big)^{-1}\big]^{-1} \in \cB\big(\LOm\big), \quad
z\in\bbC\big\backslash\big(\si\big(H_{0,\Om}^N\big)\cup\si\big(H_{\Om}^N\big)\big),
\lb{3.16}
\end{align}
by applying Theorem \ref{tB.3}. Thus, by \eqref{2.4} and \eqref{2.5},
\begin{align}
\big(H_\Om^D-zI_\Om\big)^{-1} V u_0^D &=
\big(H_{0,\Om}^D-zI_\Om\big)^{-1}\big[I_{\Om}+V\big(H_{0,\Om}^D-zI_\Om\big)^{-1}\big]^{-1}Vu_0^D
\in H^2(\Om),
\\
\big(H_\Om^N-zI_\Om\big)^{-1} V u_0^N &=
\big(H_{0,\Om}^N-zI_\Om\big)^{-1}\big[I_{\Om}+V\big(H_{0,\Om}^N-zI_\Om\big)^{-1}\big]^{-1}Vu_0^N
\in H^2(\Om),
\end{align}
and hence $u^D,u^N\in H^{3/2}(\Om)$ and $\wti\ga_N u^D \in \LdOm$.
Moreover,
\begin{align}
(-\Delta+V-z)u^D &= (-\Delta-z)u_0^D + Vu_0^D -
(-\Delta+V-z)\big(H_{\Om}^D-zI_\Om\big)^{-1}Vu_0^D \no
\\ &=
Vu_0^D - I_\Om Vu_0^D = 0,
\\
(-\Delta+V-z)u^N &= (-\Delta-z)u_0^N + Vu_0^N -
(-\Delta+V-z)\big(H_{\Om}^N-zI_\Om\big)^{-1}Vu_0^N \no
\\ &=
Vu_0^N - I_\Om Vu_0^N = 0,
\end{align}
and by \eqref{2.4}, \eqref{2.5} and \eqref{3.15}, \eqref{3.16} one also obtains,
\begin{align}
\ga_D u^D &= \ga_D u_0^D - \ga_D\big(H_{\Om}^D-zI_\Om\big)^{-1}Vu_0^D \no
\\&=
f - \ga_D \big(H_{0,\Om}^D-zI_\Om\big)^{-1}
\big[I_{\Om}+V\big(H_{0,\Om}^D-zI_\Om\big)^{-1}\big]^{-1} Vu_0^D=f,
\\
\wti\ga_N u^N &= \wti\ga_N u_0^N -
\wti\ga_N\big(H_{\Om}^N-zI_\Om\big)^{-1}Vu_0^N \no
\\&=
g - \ga_N \big(H_{0,\Om}^N-zI_\Om\big)^{-1}
\big[I_{\Om}+V\big(H_{0,\Om}^N-zI_\Om\big)^{-1}\big]^{-1} Vu_0^N=g.
\end{align}

Finally, \eqref{3.11} and \eqref{3.12} follow from \eqref{3.3},
\eqref{3.4}, \eqref{3.13}, \eqref{3.14}, and the resolvent
identity,
\begin{align}
u^D (z) &= \big[I_\Om - \big(H_\Om^D-zI_\Om\big)^{-1} V\big] \big[-\ga_N
\big(\big(H_{0,\Om}^D-zI_\Om\big)^{-1}\big)^*\big]^*f \no
\\ &=
-\big[\ga_N \big(\big(H_{0,\Om}^D- zI_\Om\big)^{-1}\big)^*
\big[I_\Om - \big(H_\Om^D-zI_\Om\big)^{-1} V\big]^*\big]^*f \no
\\ &=
-\big[\ga_N \big(\big(H_\Om^D-zI_\Om\big)^{-1}\big)^*\big]^*f,
\\[1mm]
u^N (z) &= \big[I_\Om - \big(H_\Om^N-zI_\Om\big)^{-1} V\big] \big[\ga_D
\big(\big(H_{0,\Om}^N-zI_\Om\big)^{-1}\big)^*\big]^*g \no
\\ &=
\big[\ga_D \big(\big(H_{0,\Om}^N-zI_\Om\big)^{-1}\big)^*
\big[I_\Om - \big(H_\Om^N-zI_\Om\big)^{-1} V\big]^*\big]^*g \no
\\ &=
\big[\ga_D \big(\big(H_\Om^N-zI_\Om\big)^{-1}\big)^*\big]^*g.
\end{align}
Analytic continuation with respect to $z$ then permits one to remove the additional condition $z \notin \si\big(H_{0,\Om}^D\big)$ in the case of the Dirichlet problem, and
the additional condition $z \notin \si\big(H_{0,\Om}^N\big)$ in the context of the Neumann problem.
\end{proof}
%%%%%%%%%%%%%%%%%%%%%%%%%%%%%%%%%%%%%%%

Assuming Hypothesis \ref{h2.1}, we now introduce the
Dirichlet-to-Neumann map $M_{0,\Om}^{D}(z)$ 
associated with $(-\Delta-z)$ on $\Om$, as follows,
\begin{align}
M_{0,\Om}^{D}(z) \colon
\begin{cases}
H^1(\dOm) \to \LdOm,
\\
\hspace*{10mm} f \mapsto -\wti\ga_N u_0^D,
\end{cases}  \quad z\in\bbC\big\backslash\si\big(H_{0,\Om}^D\big), \lb{3.20}
\end{align}
where $u_0^D$ is the unique solution of
\begin{align}
(-\Delta-z)u_0^D = 0 \,\text{ on }\Om, \quad u_0^D\in
H^{3/2}(\Om), \quad \ga_D u_0^D = f \,\text{ on }\dOm,
\end{align}
Similarly, assuming Hypothesis \ref{h3.2}, we introduce the
Dirichlet-to-Neumann map $M_\Om^{D}(z)$, 
associated with $(-\Delta+V-z)$ on $\Om$, by 
\begin{align}
M_\Om^{D}(z) \colon
\begin{cases}
H^1(\dOm) \to \LdOm,
\\
\hspace*{10mm} f \mapsto -\wti\ga_N u^D,
\end{cases}  \quad z\in\bbC\big\backslash\si\big(H_{\Om}^D\big), \lb{3.22}
\end{align}
where $u^D$ is the unique solution of
\begin{align}
(-\Delta+V-z)u^D = 0 \,\text{ on }\Om,  \quad u^D \in
H^{3/2}(\Om), \quad \ga_D u^D= f \,\text{ on }\dOm.
\end{align}
By Theorems \ref{t3.1} and \ref{t3.3} one obtains
\begin{equation}
M_{0,\Om}^{D}(z), M_\Om^{D}(z) \in \cB\big(H^1(\partial\Om), \LdOm \big).
\end{equation}

In addition, assuming Hypothesis \ref{h2.1}, we introduce the Neumann-to-Dirichlet map  $M_{0,\Om}^{N}(z)$ associated with $(-\Delta-z)$ on $\Om$, as follows,
\begin{align}
M_{0,\Om}^{N}(z) \colon \begin{cases} \LdOm \to H^1(\dOm),
\\
\hspace*{20.5mm} g \mapsto \ga_D u_0^N, \end{cases}  \quad
z\in\bbC\big\backslash\si\big(H_{0,\Om}^N\big), \lb{3.24}
\end{align}
where $u_0^N$ is the unique solution of
\begin{align}
(-\Delta-z)u_0^N = 0 \,\text{ on }\Om, \quad u_0^N\in
H^{3/2}(\Om), \quad \wti\ga_N u_0^N = g \,\text{ on }\dOm,
\end{align}
Similarly, assuming Hypothesis \ref{h3.2}, we introduce the Neumann-to-Dirichlet map 
$M_\Om^{N}(z)$ associated with $(-\Delta+V-z)$ on $\Om$ by 
\begin{align}
M_\Om^{N}(z) \colon \begin{cases}
\LdOm \to H^1(\dOm),
\\
\hspace*{20.5mm} g \mapsto \ga_D u^N,
\end{cases}  \quad
z\in\bbC\big\backslash\si\big(H_{\Om}^N\big), \lb{3.26}
\end{align}
where $u^N$ is the unique solution of
\begin{align}
(-\Delta+V-z)u^N = 0 \,\text{ on }\Om,  \quad u^N \in
H^{3/2}(\Om), \quad \wti\ga_N u^N= g \,\text{ on }\dOm.
\end{align}
Again, by Theorems \ref{t3.1} and \ref{t3.3} one obtains
\begin{equation}
M_{0,\Om}^{N}(z), M_\Om^{N}(z) \in \cB\big(\LdOm, H^1(\partial\Om) \big).
\end{equation}

Moreover, under the assumption of Hypothesis \ref{h2.1} for $M_{0,\Om}^D(z)$ and 
$M_{0,\Om}^N(z)$, and under the assumption of Hypothesis \ref{h3.2} for 
$M_{\Om}^D(z)$ and $M_{\Om}^N(z)$, one infers the following equalities: 
\begin{align}
M_{0,\Om}^{N}(z) &= - M_{0,\Om}^{D}(z)^{-1}, \quad
z\in\bbC\big\backslash\big(\si\big(H_{0,\Om}^D\big)\cup\si\big(H_{0,\Om}^N\big)\big),
\lb{3.28}
\\
M_{\Om}^{N}(z) &= - M_{\Om}^{D}(z)^{-1}, \quad
z\in\bbC\big\backslash\big(\si\big(H_{\Om}^D\big)\cup\si\big(H_{\Om}^N\big)\big),
\lb{3.29}
\intertext{and} 
M^{D}_{0,\Om}(z) &= \wti\gamma_N\big[\gamma_N
\big(\big(H^D_{0,\Om} - zI_\Om\big)^{-1}\big)^*\big]^*, \quad
z\in\bbC\big\backslash\si\big(H_{0,\Om}^D\big), \lb{3.30}
\\
M^{D}_{\Om}(z) &= \wti\gamma_N\big[\gamma_N \big(\big(H^D_{\Om} -
zI_\Om\big)^{-1}\big)^*\big]^*, \quad
z\in\bbC\big\backslash\si\big(H_{\Om}^D\big),
\lb{3.31}
\\
M^{N}_{0,\Om}(z) &= \gamma_D\big[\gamma_D
\big(\big(H^N_{0,\Om} - zI_\Om\big)^{-1}
\big)^*\big]^*, \quad
z\in\bbC\big\backslash\si\big(H_{0,\Om}^N\big), \lb{3.32}
\\
M^{N}_{\Om}(z) &= \gamma_D\big[\gamma_D
\big(\big(H^N_{\Om} - zI_\Om\big)^{-1}\big)^*\big]^*, \quad
z\in\bbC\big\backslash\si\big(H_{\Om}^N\big).
\lb{3.33}
\end{align}

The representations \eqref{3.30}--\eqref{3.33} provide a convenient point of departure for proving the operator-valued Herglotz property of
$M^{D}_{\Om}$ and $M^{N}_{\Om}$. We will return to this topic in a future paper.

Next, we note that the above formulas \eqref{3.30}--\eqref{3.33}
may be used as alternative definitions of the Dirichlet-to-Neumann
and Neumann-to-Dirichlet maps. In particular, we will next use
\eqref{3.31} and \eqref{3.33} to extend the above definition of the
operators $M^{D}_{\Om}(z)$ and $M^{N}_{\Om}(z)$ to a more general
setting. This is done in the following two lemmas.

%%%%%%%%%%%%%%%%%%%%%%%%%%%%%%%%%%%%%%%
\begin{lemma} \lb{l3.4}
Assume Hypothesis \ref{h2.6}. Then the following boundedness 
properties hold: 
\begin{align}
& \ga_N \big(H^D_{\Om}-zI_\Om\big)^{-1} \in\cB\big(\LOm, \LdOm\big), \quad
z\in\bbC\big\backslash\si\big(H_{\Om}^D\big), \lb{3.38a}
\\
& \ga_D \big(H^N_{\Om}-zI_\Om\big)^{-1} \in \cB\big(\LOm, H^1(\dOm)\big), \quad
z\in\bbC\big\backslash\si\big(H_{\Om}^N\big), \lb{3.39a}
\\
&\big[\ga_N \big(\big(H^D_{\Om}-zI_\Om\big)^{-1}\big)^*\big]^* \in 
\cB\big(H^1(\dOm), H^{3/2}(\Om)\big), \quad
z\in\bbC\big\backslash\si\big(H_{\Om}^D\big), \lb{3.40a}
\\
& \big[\ga_D \big(\big(H^N_{\Om}-zI_\Om\big)^{-1}\big)^*\big]^* \in 
\cB\big(\LdOm, H^{3/2}(\Om)\big), \quad
z\in\bbC\big\backslash\si\big(H_{\Om}^N\big).   \lb{3.41a}
\end{align}
Moreover, the operators $M^{D}_{\Om}(z)$ in \eqref{3.31} and $M^{N}_{\Om}(z)$ 
in \eqref{3.33} remain well-defined and satisfy 
\begin{align}
& M^{D}_{\Om}(z) \in \cB\big(H^{1}(\dOm), \LdOm\big), \quad
z\in\bbC\big\backslash\si\big(H_{\Om}^D\big), \lb{3.42a}
\\
& M^{N}_{\Om}(z) \in \cB\big(\LdOm, H^{1}(\dOm)\big), \quad
z\in\bbC\big\backslash\si\big(H_{\Om}^N\big).  \lb{3.43a}
\end{align}
In particular, $M^{N}_{\Om}(z)$, $z\in\bbC\big\backslash\si\big(H_{\Om}^N\big)$, are compact operators in $L^2(\partial\Om; d^{n-1}\sigma)$. 
\end{lemma}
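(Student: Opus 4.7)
The plan is to establish \eqref{3.38a}--\eqref{3.43a} by combining the Konno--Kuroda (second resolvent) factorization of the perturbed resolvents with the trace bounds already established for the free operators $H^{D,N}_{0,\Om}$. Writing $V = uv$ with $u, v \in L^{2p}(\Om;d^nx)$ (valid under Hypothesis \ref{h2.6}), Theorems \ref{tB.2}--\ref{tB.3} give
\begin{equation*}
(H^{D,N}_\Om - zI_\Om)^{-1} = R^{D,N}_0(z) - \overline{R^{D,N}_0(z) v}\,\bigl[I_\Om + \overline{u R^{D,N}_0(z) v}\bigr]^{-1}\, u R^{D,N}_0(z),
\end{equation*}
where $R^{D,N}_0(z) := \big(H^{D,N}_{0,\Om} - zI_\Om\big)^{-1}$ and every factor on the right-hand side is bounded on $\LOm$. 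Applying $\ga_N$ or $\ga_D$ to this identity reduces \eqref{3.38a} and \eqref{3.39a} to the boundedness of $\ga_N R^D_0(z)$ (handled by Lemma \ref{l2.4} composed with a bounded fractional power) and of $\ga_N \overline{R^D_0(z) v}$, respectively their Neumann/Dirichlet counterparts. For the latter, the strategy is to split $\overline{R^{D,N}_0(z) v}$ as $R^{D,N}_0(z)^a \circ \overline{R^{D,N}_0(z)^{1-a}\, v}$ with $a$ chosen so Lemma \ref{l2.4} (or Corollary \ref{c2.5} combined with \eqref{2.2}) controls the first factor, while Lemma \ref{l2.3} controls the second; Hypothesis \ref{h2.6} should ensure admissibility of $a$.

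For \eqref{3.40a} and \eqref{3.41a}, the strategy is to take adjoints and mimic the perturbative approach of Theorem \ref{t3.3}. Using that $\big(H^D_\Om\big)^*$ is the Schr\"odinger operator with complex-conjugate potential $\bar V = \bar u\bar v$, the second resolvent identity combined with \eqref{3.3} yields
\begin{equation*}
\big[\ga_N (R^D(z))^*\big]^* f = -u^D_0(z) + R^D(z)\, V\, u^D_0(z), \quad f \in H^1(\dOm),
\end{equation*}
where $R^D(z) := (H^D_\Om - zI_\Om)^{-1}$ and $u^D_0(z) \in H^{3/2}(\Om)$ is the free Dirichlet solution from Theorem \ref{t3.1}. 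The task is to show that the correction $R^D(z) V u^D_0$ lies in $H^{3/2}(\Om)$. Factor $V u^D_0 = u(v u^D_0)$: Sobolev embedding ($H^{3/2}(\Om) \hookrightarrow L^q(\Om)$ with $q = 2n/(n-3)$ for $n \ge 4$, and any finite $q$ for $n = 2,3$), H\"older, and $v \in L^{2p}$ under Hypothesis \ref{h2.6} give $v u^D_0 \in \LOm$. Interpret $R^D(z) V u^D_0$ as $\overline{R^D(z) u}(v u^D_0)$, expand $\overline{R^D(z) u}$ via Konno--Kuroda, and use the $\LOm \to H^2(\Om)$ smoothing of $R^D_0(z)$ to place the correction in $H^{3/2}(\Om)$. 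The Neumann case \eqref{3.41a} proceeds analogously.

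The assertions \eqref{3.42a}, \eqref{3.43a} then follow from the definitions \eqref{3.31}, \eqref{3.33}: the range of $\big[\ga_N (R^D(z))^*\big]^*$ consists of weak solutions of $(-\De + V - z)u = 0$, so $\De u = (V - z)u \in (H^1(\Om))^*$ (by H\"older and Sobolev) and $\wti\ga_N u$ is therefore well-defined by \eqref{3.1a}; given the $H^{3/2}$-regularity one in fact lands in $\LdOm$, not merely $H^{-1/2}(\dOm)$. Meanwhile $\ga_D \colon H^{3/2}(\Om) \to H^1(\dOm)$ by \eqref{2.2} with $s = 3/2$. Compactness of $M^N_\Om(z)$ in $\LdOm$ finally follows from \eqref{3.43a} and Rellich's compact embedding $H^1(\dOm) \hookrightarrow \LdOm$ (valid since $\dOm$ is a compact manifold under Hypothesis \ref{h2.1}). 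The main technical hurdle is verifying that the correction $R^D(z) V u^D_0$ indeed lies in $H^{3/2}(\Om)$ under the weaker Hypothesis \ref{h2.6} (rather than Hypothesis \ref{h3.2} used in Theorem \ref{t3.3}): this requires careful accounting of closures, fractional powers, Sobolev embeddings, and the admissible range of exponents in Lemmas \ref{l2.3}--\ref{l2.4}, particularly at the borderline in low dimensions.
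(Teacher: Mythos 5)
Your overall architecture — apply $\ga_N$, $\ga_D$ to the Konno--Kuroda resolvent identity from Theorem \ref{tB.2}, then transport boundedness through the various factors using Lemmas \ref{l2.2}--\ref{l2.4} and Theorem \ref{t3.1} — is the right one, and your identification of the "main technical hurdle" is exactly where the gap lies. The problem is that the symmetric factorization $V = uv$ with $u,v\in L^{2p}(\Om;d^nx)$ does not deliver the needed exponents under Hypothesis \ref{h2.6}. To run the argument you must show, e.g., that $\ga_N\big(H^D_{0,\Om}-zI_\Om\big)^{-1}v$ is bounded from $\LOm$ to $\LdOm$. Splitting $\big(H^D_{0,\Om}-zI_\Om\big)^{-1}$ as $\big(H^D_{0,\Om}-zI_\Om\big)^{-(3+\eps)/4}\circ\big(H^D_{0,\Om}-zI_\Om\big)^{-(1-\eps)/4}$, Lemma \ref{l2.4} handles the outer factor, and Lemma \ref{l2.3} with $q=(1-\eps)/4$ requires $v\in L^\rho$ with $\rho > 2n/(1-\eps)$, i.e.\ essentially $\rho > 2n$. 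But under Hypothesis \ref{h2.6} one only has $v=|V|^{1/2}\in L^{2p}$ with $2p>n$ (for $n\geq 3$), not $2p>2n$; the same issue killed the "choose $a$" step, since $3/4<a<1-n/(4p)$ forces $p>n$. Likewise your perturbative derivation of \eqref{3.40a} via $u^D = u_0^D - R^D(z)Vu_0^D$ requires (after writing $Vu_0^D = u(vu_0^D)$) that $\overline{R^D(z)u}\in\cB\big(\LOm,H^{3/2}(\Om)\big)$, which again needs $u\in L^\rho$ with $\rho>2n$.

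The paper's resolution, which your proposal does not contain, is to introduce a second, \emph{asymmetric} factorization $V = \wti u\,\wti v$ with $\wti u = e^{i\arg V}|V|^{p/p_1}$, $\wti v = |V|^{p/p_2}$ and $(p_1,p_2)$ as in \eqref{3.46a}: then $\wti u\in L^{p_1}$ with $p_1>2n/3$ (enough for Lemma \ref{l2.3} with $q=1$) and, crucially, $\wti v\in L^{p_2}$ with $p_2>2n$ (enough for the boundary-trace composition above). Writing the Konno--Kuroda identity \eqref{3.47a}--\eqref{3.48a} with this second factorization (while the operator $\big[I_\Om + \ol{\wti u R_0\wti v}\big]^{-1}$ is still boundedly invertible by Theorem \ref{tB.3}), each term lands in the right space: $\wti u R_0^{D,N}(z)\in\cB(\LOm)$, $R_0^{D,N}(z)\wti v\in\cB\big(\LOm,H^{3/2}(\Om)\big)$, $\ga_N R_0^D(z)\wti v\in\cB(\LOm,\LdOm)$, $\ga_D R_0^N(z)\wti v\in\cB(\LOm,H^1(\dOm))$, and $\wti u\big[\ga_N R_0^D(\bar z)\big]^*\in\cB\big(H^1(\dOm),\LOm\big)$, which together yield \eqref{3.38a}--\eqref{3.43a}. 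Without this asymmetric split the exponent bookkeeping fails whenever $n/2 < p \leq n$ (and analogously for $n=2$), which is precisely the regime Hypothesis \ref{h2.6} is designed to cover; the rest of your outline (use of Lemma \ref{lA.6} for $H^1(\dOm)$ targets, the compact embedding $H^1(\dOm)\hookrightarrow\LdOm$ for the last assertion, and analytic continuation in $z$) is sound and matches the paper.
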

%%%%%%%%%%%%%%%%%%%%%%%%%%%%%%%%%%%%%%%
\begin{proof}
We temporarily assume that
$z\in\bbC\big\backslash\big(\si\big(H_{0,\Om}^D\big)\cup\si\big(H_{\Om}^D\big)\big)$
in the case of Dirichlet Laplacian and that
$z\in\bbC\big\backslash\big(\si\big(H_{0,\Om}^N\big)\cup\si\big(H_{\Om}^N\big)\big)$
in the context of Neumann Laplacian.

Next, let $u,v$ and $\wti u, \wti v$ denote the following
factorizations of the perturbation $V$,
\begin{align}
V(x) = u(x)v(x),\quad u(x) &= \exp(i\arg(V(x)))\abs{V(x)}^{1/2},\quad
v(x)=\abs{V(x)}^{1/2}, \lb{3.44a}
\\
V(x) = \wti u(x) \wti v(x),\quad \wti u(x) &=
\exp(i\arg(V(x)))\abs{V(x)}^{p/p_1},\quad \wti
v(x)=\abs{V(x)}^{p/p_2}, \lb{3.45a}
\end{align}
where
\begin{align} \lb{3.46a}
p_1=\begin{cases} 3p/2, & n=2, \\ 4p/3, & n\geq3,\end{cases} \qquad
p_2=\begin{cases}3p, &n=2, \\ 4p, & n\geq3.\end{cases}
\end{align}
We note that Hypothesis \ref{h2.6} and \eqref{3.44a}, \eqref{3.45a}
imply
\begin{align}
\wti u \in L^{p_1}(\Om;d^nx), \; \wti v\in L^{p_2}(\Om;d^nx),
\;\text{ and }\; u,v\in L^{2p}(\Om;d^nx). \lb{3.46b}
\end{align}

It follows from the definition of the operators $H_{\Om}^D$ and
$H_{\Om}^N$ and, in particular, from \eqref{B.5} that
\begin{align}
\big(H^D_{\Om}-zI_\Om\big)^{-1} &= \big(H^D_{0,\Om}-zI_\Om\big)^{-1}
- \big(H^D_{0,\Om}-zI_\Om\big)^{-1}v
\Big[I_\Om+\ol{u\big(H^D_{0,\Om}-zI_\Om\big)^{-1}v}\,\Big]^{-1}
u\big(H^D_{0,\Om}-zI_\Om\big)^{-1} \no
\\ &=
\big(H^D_{0,\Om}-zI_\Om\big)^{-1} - \big(H^D_{0,\Om}-zI_\Om\big)^{-1}
\wti v \Big[I_\Om+ \ol{\wti u \big(H^D_{0,\Om}-zI_\Om\big)^{-1} \wti
v}\,\Big]^{-1} \wti u \big(H^D_{0,\Om}-zI_\Om\big)^{-1}, \lb{3.47a}
\\
\big(H^N_{\Om}-zI_\Om\big)^{-1} &= \big(H^N_{0,\Om}-zI_\Om\big)^{-1}
- \big(H^N_{0,\Om}-zI_\Om\big)^{-1}v
\Big[I_\Om+\ol{u\big(H^N_{0,\Om}-zI_\Om\big)^{-1}v}\,\Big]^{-1}
u\big(H^N_{0,\Om}-zI_\Om\big)^{-1} \no
\\&=
\big(H^N_{0,\Om}-zI_\Om\big)^{-1} - \big(H^N_{0,\Om}-zI_\Om\big)^{-1}
\wti v \Big[I_\Om+ \ol{\wti u \big(H^N_{0,\Om}-zI_\Om\big)^{-1} \wti
v}\,\Big]^{-1} \wti u \big(H^N_{0,\Om}-zI_\Om\big)^{-1}. \lb{3.48a}
\end{align}

Next, we establish a number of boundedness properties that will imply
\eqref{3.38a}--\eqref{3.43a}. First, note that it follows from
Hypothesis \ref{2.6} and \eqref{3.46a} that
$p_1=\frac32p>2>2n/3$, $p_2=3p>4$ for $n=2$ and
$p_1=\frac43p>2n/3$, $p_2=4p>2n$ for $n\geq3$. Then,
utilizing Lemma \ref{l2.3}, one obtains
\begin{align}
& \wti u \big(H^D_{0,\Om}-zI_\Om\big)^{-1} \in \cB\big(\LOm,\LOm\big), \quad
z\in\bbC\big\backslash\si\big(H^D_{0,\Om}\big), \lb{3.51a}
\\
& \wti u \big(H^N_{0,\Om}-zI_\Om\big)^{-1} \in \cB\big(\LOm,\LOm\big), \quad
z\in\bbC\big\backslash\si\big(H^N_{0,\Om}\big), \lb{3.52a}
\\
& \big(H^D_{0,\Om}-zI_\Om\big)^{-\frac{1-\eps}{4}} \wti v \in 
\cB\big(\LOm,\LOm\big), \quad z\in\bbC\big\backslash\si\big(H^D_{0,\Om}\big),
\lb{3.53a}
\\
& \big(H^N_{0,\Om}-zI_\Om\big)^{-\frac{1-\eps}{4}} \wti v \in 
\cB\big(\LOm,\LOm\big), \quad z\in\bbC\big\backslash\si\big(H^N_{0,\Om}\big),
\lb{3.54a}
\end{align}
and, utilizing Lemma \ref{l2.2} and the inclusion \eqref{incl-xxx}, one
obtains for $\eps\in(0,1-2n/p_2)$, 
\begin{align}
& \big(H^D_{0,\Om}-zI_\Om\big)^{-\frac{3+\eps}{4}} \in 
\cB\big(\LOm,H^{\frac{3+\eps}{2}}(\Om)\big), \quad
z\in\bbC\big\backslash\si\big(H^D_{0,\Om}\big), \lb{3.54b}
\\
& \big(H^N_{0,\Om}-zI_\Om\big)^{-\frac{3+\eps}{4}} \in 
\cB\big(\LOm,H^{\frac{3+\eps}{2}}(\Om)\big), \quad
z\in\bbC\big\backslash\si\big(H^N_{0,\Om}\big). \lb{3.54c}
\end{align}
In addition, 
\begin{align}
& \big(H^D_{0,\Om}-zI_\Om\big)^{-\frac{3+\eps}{4}} : \LOm \to
H^{\frac{3+\eps}{2}}(\Om) \hookrightarrow H^{3/2}(\Om), \quad
z\in\bbC\big\backslash\si\big(H^D_{0,\Om}\big), \lb{3.55a}
\\
& \big(H^N_{0,\Om}-zI_\Om\big)^{-\frac{3+\eps}{4}} : \LOm \to
H^{\frac{3+\eps}{2}}(\Om) \hookrightarrow H^{3/2}(\Om), \quad
z\in\bbC\big\backslash\si\big(H^N_{0,\Om}\big). \lb{3.56a}
\end{align}
In particular, one concludes from
\eqref{3.53a}--\eqref{3.56a} that
\begin{align}
& \big(H^D_{0,\Om}-zI_\Om\big)^{-1} \wti v \in 
\cB\big( \LOm,H^{3/2}(\Om)\big),
\quad z\in\bbC\big\backslash\si\big(H^D_{0,\Om}\big), \lb{3.57a}
\\
& \big(H^N_{0,\Om}-zI_\Om\big)^{-1} \wti v \in 
\cB\big(\LOm,H^{3/2}(\Om)\big),
\quad z\in\bbC\big\backslash\si\big(H^N_{0,\Om}\big). \lb{3.58a}
\end{align}
In addition, it follows from \eqref{3.53a}--\eqref{3.56a}, the definition
of $\ga_N$ \eqref{2.3}, inclusion \eqref{incl-xxx}, and Lemma
\ref{lA.6} that
\begin{align}
& \ga_N \big(H^D_{0,\Om}-zI_\Om\big)^{-1} \wti v \in 
\cB\big(\LOm,\LdOm\big),
\quad z\in\bbC\big\backslash\si\big(H^D_{0,\Om}\big), \lb{3.59a}
\\
& \ga_D \big(H^N_{0,\Om}-zI_\Om\big)^{-1}\wti v \in 
\cB\big(\LOm,H^1(\dOm)\big),
\quad z\in\bbC\backslash\si(H^N_{0,\Om}\big). \lb{3.60a}
\end{align}

Next, it follows from Theorem \ref{t3.1} that
\begin{align}
& \big[\ga_N\big(H^D_{0,\Om}-\ol{z}I_\Om\big)^{-1}\big]^* \in 
\cB\big(H^1(\dOm),H^{3/2}(\Om)\big), \quad
z\in\bbC\big\backslash\si\big(H^D_{0,\Om}\big), \lb{3.61a}
\\
& \big[\ga_D \big(H^N_{0,\Om}-\ol{z}I_\Om\big)^{-1}\big]^* \in 
\cB\big(\LdOm,H^{3/2}(\Om)\big), \quad 
z\in\bbC\big\backslash\si\big(H^N_{0,\Om}\big).
\lb{3.62a}
\end{align}
Then, employing the Sobolev embedding theorem
\begin{align}
H^{3/2}(\Om)\hookrightarrow L^q(\Om;d^nx)
\end{align}
with $q$ satisfying $1/q=(1/2)-(1/p_1)>(1/2)-3/(2n)$, $n\geq2$,
and the fact that $\wti u \in L^{p_1}(\Om;d^nx)$, one obtains the
following boundedness properties from \eqref{3.61a} and \eqref{3.62a},
\begin{align}
& \wti u\big[\ga_N \big(H^D_{0,\Om}-\ol{z}I_\Om\big)^{-1}\big]^* \in
\cB\big(H^1(\dOm),\LOm\big), \quad
z\in\bbC\big\backslash\si\big(H^D_{0,\Om}\big), \lb{3.65a}
\\
& \wti u\big[\ga_D \big(H^N_{0,\Om}-\ol{z}I_\Om\big)^{-1}\big]^* \in 
\cB\big(\LdOm,\LOm\big), \quad z\in\bbC\big\backslash\si\big(H^N_{0,\Om}\big).
\lb{3.66a}
\end{align}

Moreover, it follows from Theorem \ref{tB.3} that the operators
$\big[I_\Om+ \wti u \big(H^D_{0,\Om}-zI_\Om\big)^{-1} \wti v\big]$
and $\big[I_\Om+ \wti u \big(H^N_{0,\Om}-zI_\Om\big)^{-1} \wti
v\big]$ are boundedly invertible on $\LOm$ for
$z\in\bbC\big\backslash\big(\si\big(H_{0,\Om}^D\big)\cup\si\big(H_{\Om}^D\big)\big)$
and
$z\in\bbC\big\backslash\big(\si\big(H_{0,\Om}^N\big)\cup\si\big(H_{\Om}^N\big)\big)$,
respectively, that is, the following operators are bounded, 
\begin{align}
& \big[I_\Om+ \wti u \big(H^D_{0,\Om}-zI_\Om\big)^{-1} \wti v\big]^{-1}
\in \cB\big(\LOm,\LOm\big), \quad
z\in\bbC\big\backslash\big(\si\big(H_{0,\Om}^D\big)\cup\si\big(H_{\Om}^D\big)\big),
\lb{3.67a}
\\
& \big[I_\Om+ \wti u \big(H^N_{0,\Om}-zI_\Om\big)^{-1} \wti v\big]^{-1}
\in \cB\big(\LOm,\LOm\big), \quad
z\in\bbC\big\backslash\big(\si\big(H_{0,\Om}^N\big)\cup\si\big(H_{\Om}^N\big)\big).
\lb{3.68a}
\end{align}

Finally, combining \eqref{3.47a}--\eqref{3.68a}, one obtains the
assertions of Lemma \ref{l3.4} as follows: \eqref{3.38a} follows from \eqref{3.47a},
\eqref{3.51a}, \eqref{3.59a}, \eqref{3.67a}; \eqref{3.39a} follows
from \eqref{3.48a}, \eqref{3.52a}, \eqref{3.60a}, \eqref{3.68a};
\eqref{3.40a} follows from \eqref{3.47a}, \eqref{3.57a},
\eqref{3.65a}, \eqref{3.67a}; \eqref{3.41a} follows from
\eqref{3.48a}, \eqref{3.58a}, \eqref{3.66a}, \eqref{3.68a};

Thus, by \eqref{3.20}, \eqref{3.59a}, \eqref{3.65a}, and
\eqref{3.67a}, we may introduce the operator 
\begin{equation}
M^D_{\Om}(z) = M^D_{0,\Om}(z) - \ga_N \big(H^D_{0,\Om}-zI_\Om\big)^{-1}
\wti v \Big[I_\Om+ \ol{\wti u \big(H^D_{0,\Om}-zI_\Om\big)^{-1} \wti v}\,\Big]^{-1} \wti
u \big(\ga_N\big(H^D_{0,\Om}-\ol{z}I_\Om\big)^{-1}\big)^*, \lb{3.49a}
\end{equation}
and observe that it satisfies \eqref{3.42a}. In addition, \eqref{3.47a} shows that \eqref{3.31} remains in effect under Hypothesis \ref{h2.6}. 

Similarly, by \eqref{3.24}, \eqref{3.60a}, \eqref{3.66a}, and
\eqref{3.68a}, we may introduce the operator 
\begin{equation}
M^N_{\Om}(z) = M^N_{0,\Om}(z) - \ga_D \big(H^N_{0,\Om}-zI_\Om\big)^{-1}
\wti v \Big[I_\Om+ \ol{\wti u \big(H^N_{0,\Om}-zI_\Om\big)^{-1} \wti v}\,\Big]^{-1} \wti
u \big(\ga_D \big(H^N_{0,\Om}-\ol{z}I_\Om\big)^{-1}\big)^*,    \lb{3.50a}
\end{equation}
and observe that it satisfies \eqref{3.43a}. In addition, \eqref{3.48a} shows that 
\eqref{3.33} remains in effect under Hypothesis \ref{h2.6}. Moreover, since 
$H^1(\partial\Om)$ embeds compactly into $L^2(\partial\Om; d^{n-1}\sigma)$ 
(cf.\ \eqref{EQ1} and \cite[Proposition~2.4]{MM07}), 
$M^{N}_{\Om}(z)$, $z\in\bbC\big\backslash\si\big(H_{\Om}^N\big)$, are compact 
operators in $L^2(\partial\Om; d^{n-1}\sigma)$. 

Finally, formulas \eqref{3.31} and \eqref{3.33}
together with analytic continuation with respect to $z$ then permit one to
remove the additional restrictions $z\notin\si\big(H_{0,\Om}^D\big)$ and
$z\notin\si\big(H_{0,\Om}^N\big)$, respectively. 
\end{proof}
%%%%%%%%%%%%%%%%%%%%%%%%%%%%%%%%%%%%%%%

Actually, one can go a step further and allow an additional
perturbation $V_1\in L^\infty(\Om;d^nx)$ of $H^D_{\Om}$ and $H^N_{\Om}$,
\begin{align}
H^D_{1,\Om} &= H^D_{\Om} + V_1, \quad \dom(H^D_{1,\Om})=\dom(H^D_{\Om}),
\lb{3.70a}  \\
H^N_{1,\Om} &=H^N_{\Om} + V_1, \quad \dom(H^N_{1,\Om})=\dom(H^N_{\Om}).
\lb{3.70b}
\end{align}
Defining the Dirichlet-to-Neumann and Neumann-to-Dirichlet operators
$M^{D}_{1,\Om}$ and $M^{N}_{1,\Om}$ in an analogous fashion as in
\eqref{3.31} and \eqref{3.33},
\begin{align}
&M^{D}_{1,\Om}(z) = \wti\gamma_N\big[\gamma_N
\big(\big(H^D_{1,\Om} - zI_\Om\big)^{-1}\big)^*\big]^*, \quad
z\in\bbC\big\backslash\si\big(H_{1,\Om}^D\big), \lb{3.71a}
\\
& M^{N}_{1,\Om}(z) = \gamma_D\big[\gamma_D \big(\big(H^N_{1,\Om} -
zI_\Om\big)^{-1}\big)^*\big]^*, \quad
z\in\bbC\big\backslash\si\big(H_{1,\Om}^N\big), \lb{3.72a}
\end{align}
one can then prove the following result:

%%%%%%%%%%%%%%%%%%%%%%%%%%%%%%%%%%%%%%
\begin{lemma}
Assume Hypothesis \ref{h2.6} and let $V_1\in L^\infty(\Om;d^nx)$. Then
the operators $M^{D}_{1,\Om}(z)$ and $M^{N}_{1,\Om}(z)$ defined by
\eqref{3.71a} and \eqref{3.72a} satisfy the following boundedness properties,
\begin{align}
M^{D}_{1,\Om}(z) \in \cB\big(H^{1}(\dOm), \LdOm\big), \quad
z\in\bbC\big\backslash\si\big(H_{1,\Om}^D\big), \lb{3.73a}
\\
M^{N}_{1,\Om}(z) \in \cB\big(\LdOm, H^{1}(\dOm)\big), \quad
z\in\bbC\big\backslash\si\big(H_{1,\Om}^N\big). \lb{3.74a}
\end{align}
\end{lemma}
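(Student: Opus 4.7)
The plan is to regard $H^{D}_{1,\Om}$ and $H^{N}_{1,\Om}$ as bounded perturbations of $H^{D}_{\Om}$ and $H^{N}_{\Om}$, and to transfer the mapping properties collected in Lemma \ref{l3.4} by means of the second resolvent identity. Because $V_1\in L^\infty(\Om;d^nx)$ acts boundedly on $\LOm$, the operators in \eqref{3.70a} and \eqref{3.70b} are closed on $\dom\big(H^{D}_{\Om}\big)$ and $\dom\big(H^{N}_{\Om}\big)$, respectively, by standard perturbation theory.

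I would first restrict to $z\in\bbC\setminus\big(\si\big(H^D_{\Om}\big)\cup\si\big(H^D_{1,\Om}\big)\big)$, where both resolvents exist, and exploit the factorization
\[
\big(H^D_{1,\Om}-zI_\Om\big)^{-1} = \big[I_\Om + \big(H^D_{\Om}-zI_\Om\big)^{-1} V_1\big]^{-1}\big(H^D_{\Om}-zI_\Om\big)^{-1}.
\]
Taking adjoints, applying $\ga_N$, and then adjoining once more would yield
\[
\big[\ga_N\big(\big(H^D_{1,\Om}-zI_\Om\big)^{-1}\big)^*\big]^{*} = \big[I_\Om+\big(H^D_{\Om}-zI_\Om\big)^{-1}V_1\big]^{-1}\big[\ga_N\big(\big(H^D_{\Om}-zI_\Om\big)^{-1}\big)^*\big]^{*}.
\]
The right-most factor maps $H^1(\dOm)$ into $H^{3/2}(\Om)$ by \eqref{3.40a}; composing with $\wti\ga_N:H^{3/2}(\Om)\to\LdOm$ (exactly as in the passage from \eqref{3.31} to \eqref{3.42a} in the proof of Lemma \ref{l3.4}) would then deliver \eqref{3.73a}, provided one can show that $\big[I_\Om+\big(H^D_{\Om}-zI_\Om\big)^{-1}V_1\big]^{-1}$ preserves $H^{3/2}(\Om)$.

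This $H^{3/2}$-preservation is the main technical hurdle. Since $V_1\in L^\infty(\Om)$ maps $H^{3/2}(\Om)\hookrightarrow\LOm$ continuously into $\LOm$, and since the resolvent formula \eqref{3.47a} combined with Lemma \ref{l2.2} and \eqref{3.57a} shows $\big(H^D_{\Om}-zI_\Om\big)^{-1}\in\cB\big(\LOm,H^{3/2}(\Om)\big)$, the composition $\big(H^D_{\Om}-zI_\Om\big)^{-1}V_1$ is bounded on $H^{3/2}(\Om)$. The $\LOm$-invertibility of $I_\Om+\big(H^D_{\Om}-zI_\Om\big)^{-1}V_1$ (which is equivalent to $z\in\rho\big(H^D_{1,\Om}\big)$), the algebraic identity $[I+A]^{-1}=I-[I+A]^{-1}A$, and an application of the closed graph theorem would then yield boundedness of this inverse on $H^{3/2}(\Om)$.

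The argument for \eqref{3.74a} proceeds identically under the substitutions $H^D\mapsto H^N$, $\ga_N\mapsto\ga_D$, and \eqref{3.40a}$\mapsto$\eqref{3.41a}, with the final composition being $\ga_D:H^{3/2}(\Om)\to H^1(\dOm)$ in place of $\wti\ga_N$. Analytic continuation in $z$, exactly as at the end of the proof of Lemma \ref{l3.4}, then removes the auxiliary condition $z\in\rho\big(H^{D,N}_{\Om}\big)$. I expect the only step that demands genuine care to be the $H^{3/2}$-to-$H^{3/2}$ boundedness of the inverse operator; everything else amounts to routine resolvent manipulations and direct appeals to Lemma \ref{l3.4}.
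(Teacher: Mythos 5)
Your proposal is correct, but it deliberately chooses a factorization different from the one the paper uses, and this forces you to confront a bootstrapping issue that the paper's arrangement avoids. The paper starts from the second resolvent identity in the additive form
\begin{equation}
\big(H^D_{1,\Om}-zI_\Om\big)^{-1}=\big(H^D_{\Om}-zI_\Om\big)^{-1}-\big(H^D_{\Om}-zI_\Om\big)^{-1}\big[I_\Om+V_1\big(H^D_{\Om}-zI_\Om\big)^{-1}\big]^{-1}V_1\big(H^D_{\Om}-zI_\Om\big)^{-1},
\end{equation}
which, after applying $\ga_N$ and its dual, yields $M^D_{1,\Om}(z)-M^D_\Om(z)$ as the composition $\ga_N\big(H^D_{\Om}-zI_\Om\big)^{-1}\cdot\big[I_\Om+V_1\big(H^D_{\Om}-zI_\Om\big)^{-1}\big]^{-1}\cdot V_1\cdot\big[\ga_N\big(\big(H^D_{\Om}-zI_\Om\big)^{-1}\big)^*\big]^*$. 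Reading right to left this chain is $H^1(\dOm)\to H^{3/2}(\Om)\hookrightarrow\LOm\xrightarrow{V_1}\LOm\to\LOm\to\LdOm$, so the inverse $\big[I_\Om+V_1\big(H^D_\Om-zI_\Om\big)^{-1}\big]^{-1}$ only ever acts on $\LOm$, where its boundedness is immediate from Theorem \ref{tB.3}. Your multiplicative factorization $(H^D_{1,\Om}-zI_\Om)^{-1}=\big[I_\Om+(H^D_\Om-zI_\Om)^{-1}V_1\big]^{-1}(H^D_\Om-zI_\Om)^{-1}$ instead places the inverse directly in front of $\wti\ga_N$, and this is precisely why you are obliged to prove $H^{3/2}$-invariance of $\big[I_\Om+(H^D_\Om-zI_\Om)^{-1}V_1\big]^{-1}$ — a step the paper never needs. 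Your sketch of that step is essentially sound but would profit from being made explicit: writing $A=(H^D_\Om-zI_\Om)^{-1}V_1$, the useful form of the identity is $[I_\Om+A]^{-1}=I_\Om-A[I_\Om+A]^{-1}$ (equivalent to what you wrote since $A$ commutes with $[I_\Om+A]^{-1}$, but the order matters for the bootstrap); given $f\in H^{3/2}(\Om)$, one then has $[I_\Om+A]^{-1}f\in\LOm$, hence $A[I_\Om+A]^{-1}f\in H^{3/2}(\Om)$ because $A\in\cB\big(\LOm,H^{3/2}(\Om)\big)$, and so $[I_\Om+A]^{-1}f=f-A[I_\Om+A]^{-1}f\in H^{3/2}(\Om)$ with the obvious norm bound. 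With that made precise, both approaches are valid; the paper's arrangement simply short-circuits this regularity bootstrap by keeping the inverse in the middle of the chain where only $\LOm$-boundedness is ever required.
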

%%%%%%%%%%%%%%%%%%%%%%%%%%%%%%%%%%%%%%%
\begin{proof}
We temporarily assume that
$z\in\bbC\big\backslash\big(\si\big(H_{\Om}^D\big)\cup\si\big(H_{1,\Om}^D\big)\big)$
in the case of $M^{D}_{1,\Om}$ and that
$z\in\bbC\big\backslash\big(\si\big(H_{\Om}^N\big)\cup\si\big(H_{1,\Om}^N\big)\big)$
in the context of $M^{N}_{1,\Om}$.

Next, using resolvent identities and \eqref{3.70a}, \eqref{3.70b}, one computes
\begin{align}
\big(H^D_{1,\Om}-zI_\Om\big)^{-1} &= \big(H^D_{\Om}-zI_\Om\big)^{-1}
- \big(H^D_{\Om}-zI_\Om\big)^{-1} \Big[I_\Om+ V_1
\big(H^D_{\Om}-zI_\Om\big)^{-1}\,\Big]^{-1} V_1
\big(H^D_{\Om}-zI_\Om\big)^{-1}, \lb{3.75a}
\\
\big(H^N_{1,\Om}-zI_\Om\big)^{-1} &= \big(H^N_{\Om}-zI_\Om\big)^{-1}
- \big(H^N_{\Om}-zI_\Om\big)^{-1} \Big[I_\Om+ V_1
\big(H^N_{\Om}-zI_\Om\big)^{-1}\,\Big]^{-1} V_1
\big(H^N_{\Om}-zI_\Om\big)^{-1}, \lb{3.76a}
\end{align}
and hence,
\begin{align}
M^D_{1,\Om} &= M^D_{\Om} - \ga_N \big(H^D_{\Om}-zI_\Om\big)^{-1}
\Big[I_\Om+ V_1 \big(H^D_{\Om}-zI_\Om\big)^{-1}\,\Big]^{-1} V_1
\big[\ga_N\big(\big(H^D_{\Om}-zI_\Om\big)^{-1}\big)^*\big]^*,
\lb{3.77a}
\\
M^N_{1,\Om} &= M^N_{\Om} - \ga_D \big(H^N_{\Om}-zI_\Om\big)^{-1}
\Big[I_\Om+ V_1 \big(H^N_{\Om}-zI_\Om\big)^{-1}\,\Big]^{-1} V_1
\big[\ga_D\big(\big(H^N_{\Om}-zI_\Om\big)^{-1}\big)^*\big]^*.
\lb{3.78a}
\end{align}
The assertions \eqref{3.73a} and \eqref{3.74a} now follow from
\eqref{3.38a}--\eqref{3.43a} and the fact that by Theorem \ref{tB.3}, the
operators $\big[I_\Om+ V_1 \big(H^D_{\Om}-zI_\Om\big)^{-1}\big]$ and
$\big[I_\Om+V_1 \big(H^N_{\Om}-zI_\Om\big)^{-1}\big]$ are boundedly invertible on
$\LOm$ for all
$z\in\bbC\big\backslash\big(\si\big(H_{\Om}^D\big)\cup\si\big(H_{1,\Om}^D\big)\big)$
and
$z\in\bbC\big\backslash\big(\si\big(H_{\Om}^N\big)\cup\si\big(H_{1,\Om}^N\big)\big)$,
respectively. Formulas \eqref{3.71a} and \eqref{3.72a} together with
analytic continuation with respect to $z$ then permit one to remove the additional
restrictions $z\notin\si\big(H_{\Om}^D\big)$ and $z\notin\si\big(H_{\Om}^N\big)$, respectively.
\end{proof}
%%%%%%%%%%%%%%%%%%%%%%%%%%%%%%%%%%%%%%%

Weyl--Titchmarsh operators, in a spirit close to ours, have
recently been discussed by Amrein and Pearson \cite{AP04} in
connection with the interior and exterior of a ball in $\bbR^3$
and potentials $V\in L^\infty(\bbR^3;d^3x)$. For additional
literature on Weyl--Titchmarsh operators, relevant in the context
of boundary value spaces (boundary triples, etc.), we refer, for
instance, to \cite{ABMN05}, \cite{BL06}, \cite{BMN06},
\cite{BMN00}, \cite{BMN02}, \cite{BM04}, \cite{DM91}, \cite{DM95},
\cite{GKMT01}, \cite[Ch.\ 3]{GG91}, \cite{MM06}, \cite{Ma04}, \cite{MPP07}
\cite{Pa87}, \cite{Pa02}. For applications of the
Dirichlet-to-Neumann map to Borg--Levinson-type inverse spectral
problems we refer to \cite{Ch90}, \cite{NSU88}, \cite{PS02},
\cite{Sa05}, \cite{SU86}, \cite{SU87} (see also \cite{KLW05} for
an alternative approach based on the boundary control method). The
inverse problem of detecting the number of connected components
(i.e., the number of holes) in $\partial\Omega$ using the
high-energy spectral asymptotics of the Dirichlet-to-Neumann map
is studied in \cite{HL01},

Next, we prove the following auxiliary result, which will play a
crucial role in Theorem \ref{t4.2}, the principal result of this
paper.

%%%%%%%%%%%%%%%%%%%%%%%%%%%%%%%%%%%%%%%
\begin{lemma} \lb{l3.5}
Assume Hypothesis \ref{h2.6}. Then the following
identities hold,
\begin{align}
M_{0,\Om}^D(z) - M_\Om^D(z) &= \ol{\wti\gamma_N
\big(H^D_{\Om}-zI_\Om\big)^{-1} V \big[\gamma_N
\big(\big(H^D_{0,\Om}-zI_\Om\big)^{-1}\big)^*\big]^*}, \no
\\
&\hspace*{3.1cm}
z\in\bbC\big\backslash\big(\si\big(H_{0,\Om}^D\big)\cup\si\big(H_{\Om}^D\big)\big),
\lb{3.35}
\\
M_\Om^D(z) M_{0,\Om}^D(z)^{-1} &= I_\dOm - \ol{\wti\gamma_N
\big(H^D_{\Om}-zI_\Om\big)^{-1} V \big[\gamma_D
\big(\big(H^N_{0,\Om}-zI_\Om\big)^{-1}\big)^*\big]^*}, \no
\\
&\hspace*{2.45cm}
z\in\bbC\big\backslash\big(\si\big(H_{0,\Om}^D\big)\cup\si\big(H_{\Om}^D\big)
\cup\si\big(H_{0,\Om}^N\big)\big). \lb{3.36}
\end{align}
\end{lemma}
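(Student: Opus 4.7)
The plan is to prove \eqref{3.35} by applying $\wti\ga_N$ to the perturbative representation of the Schr\"odinger Dirichlet solution supplied by Theorem \ref{t3.3}, and then to deduce \eqref{3.36} from \eqref{3.35} by right-multiplication by $M^D_{0,\Om}(z)^{-1}=-M^N_{0,\Om}(z)$ (cf.\ \eqref{3.28}), invoking a compatibility identity between the two families of adjoint boundary-trace operators.

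For \eqref{3.35}, I would temporarily impose the stronger Hypothesis \ref{h3.2} and fix $f\in H^1(\dOm)$. Let $u^D_0(z)$ and $u^D(z)$ denote the unique Helmholtz and Schr\"odinger Dirichlet solutions with boundary datum $f$. The key input is the perturbative formula $u^D = u^D_0 - \big(H^D_\Om-zI_\Om\big)^{-1} V u^D_0$ established in the proof of Theorem \ref{t3.3}, together with the representation $u^D_0 = -\big[\ga_N\big(H^D_{0,\Om}-\ol{z}I_\Om\big)^{-1}\big]^* f$ coming from \eqref{3.3}. Applying $\wti\ga_N$ and invoking the definitions \eqref{3.20}, \eqref{3.22} then produces \eqref{3.35} as an identity on $H^1(\dOm)$. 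To descend from Hypothesis \ref{h3.2} to the weaker Hypothesis \ref{h2.6}, I would instead start from the explicit formula \eqref{3.49a} of Lemma \ref{l3.4} for $M^D_\Om-M^D_{0,\Om}$ and, with the aid of the resolvent identity \eqref{3.47a} applied to the factorization $V=\wti u\wti v$ from \eqref{3.45a}, recast it in the form of the closure on the right-hand side of \eqref{3.35}. Boundedness of this closure as an operator on $\LdOm$ follows by combining the mapping properties in Lemma \ref{l3.4} with the Schatten-class estimates of Corollary \ref{c2.5}.

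For \eqref{3.36}, I would right-multiply \eqref{3.35} by $M^D_{0,\Om}(z)^{-1}=-M^N_{0,\Om}(z)$ and replace the resulting composition $\big[\ga_N\big(H^D_{0,\Om}-\ol{z}I_\Om\big)^{-1}\big]^* M^N_{0,\Om}(z)$ by $-\big[\ga_D\big(H^N_{0,\Om}-\ol{z}I_\Om\big)^{-1}\big]^*$. This replacement rests on the following observation: for $g\in\LdOm$, the Helmholtz Neumann solution $u^N_0 = \big[\ga_D\big(H^N_{0,\Om}-\ol{z}I_\Om\big)^{-1}\big]^* g$ from \eqref{3.4} has Dirichlet trace $\ga_D u^N_0 = M^N_{0,\Om}(z) g$, so by the uniqueness part of Theorem \ref{t3.1} it coincides with the Helmholtz Dirichlet solution whose datum is $M^N_{0,\Om}(z) g$, namely $-\big[\ga_N\big(H^D_{0,\Om}-\ol{z}I_\Om\big)^{-1}\big]^* M^N_{0,\Om}(z) g$. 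Substitution yields \eqref{3.36}, the additional spectral exclusion $z\notin\si\big(H^N_{0,\Om}\big)$ being dictated by the domain of $M^N_{0,\Om}(z)$.

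The principal obstacle lies in the careful bookkeeping of function spaces and in justifying the closures appearing on the right-hand sides of \eqref{3.35} and \eqref{3.36}: under Hypothesis \ref{h2.6}, the composition $V\big[\ga_N\big(H^D_{0,\Om}-\ol{z}I_\Om\big)^{-1}\big]^*$ need not take values in $\LOm$ on all of $H^1(\dOm)$, so the right-hand sides must be interpreted through the factorization $V=\wti u\wti v$ and the bounded inverse of $I_\Om+\overline{\wti u\big(H^D_{0,\Om}-zI_\Om\big)^{-1}\wti v}$ guaranteed by Theorem \ref{tB.3}, with the mapping properties \eqref{3.38a}--\eqref{3.43a} of Lemma \ref{l3.4} doing the heavy lifting in the background.
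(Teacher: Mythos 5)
Your proposal is correct and, at bottom, takes the same route as the paper: \eqref{3.35} rests on the second resolvent identity for the pair $H^D_{0,\Om}$, $H^D_\Om$, and \eqref{3.36} follows by right-multiplying by $M^D_{0,\Om}(z)^{-1}=-M^N_{0,\Om}(z)$ together with the adjoint compatibility identity
\[
\big[\ga_N\big(\big(H^D_{0,\Om}-zI_\Om\big)^{-1}\big)^*\big]^*\ga_D\big[\ga_D\big(\big(H^N_{0,\Om}-zI_\Om\big)^{-1}\big)^*\big]^*
=-\big[\ga_D\big(\big(H^N_{0,\Om}-zI_\Om\big)^{-1}\big)^*\big]^*,
\]
whose proof (uniqueness of the Helmholtz Dirichlet solution with boundary datum $\ga_D u^N_0$, cf.\ Theorem \ref{t3.1}) you have reproduced exactly. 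The only difference is organizational: for \eqref{3.35} you pass through the perturbative solution formula $u^D=u^D_0-(H^D_\Om-zI_\Om)^{-1}Vu^D_0$ of Theorem \ref{t3.3}, which is tied to the stronger Hypothesis \ref{h3.2}, and then retrofit the result to Hypothesis \ref{h2.6} via \eqref{3.49a} and \eqref{4.29a}. The paper avoids this detour by working directly with the operator representations \eqref{3.30}, \eqref{3.31}, which Lemma \ref{l3.4} has already extended to Hypothesis \ref{h2.6}; the two-line computation with the resolvent identity then gives \eqref{3.35} immediately, and closures are handled just as you indicate. Either packaging is fine; yours spends an extra step re-deriving what Lemma \ref{l3.4} already provides.
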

%%%%%%%%%%%%%%%%%%%%%%%%%%%%%%%%%%%%%%%
\begin{proof}
Let $z\in\bbC\big\backslash\big(\si\big(H_{0,\Om}^D\big)\cup\si\big(H_{\Om}^D\big)\big)$.
Then \eqref{3.35} follows from \eqref{3.30}, \eqref{3.31}, and
the resolvent identity
\begin{align}
M_{0,\Om}^D(z) - M_\Om^D(z) &=
\wti\gamma_N\big[\gamma_N\big(\big(H^D_{0,\Om}-zI_\Om\big)^{-1} -
\big(H^D_{\Om}-zI_\Om\big)^{-1}\big)^*\big]^* \no
\\ &=
\ol{\wti\gamma_N\big[\gamma_N \big(
\big(H^D_{\Om}-zI_\Om\big)^{-1}V\big(H^D_{0,\Om}-zI_\Om\big)^{-1} \big)^*\big]^*}
\\ &=
\ol{\wti\gamma_N \big(H^D_{\Om}-zI_\Om\big)^{-1} V \big[\gamma_N
\big(\big(H^D_{0,\Om}-zI_\Om\big)^{-1}\big)^*\big]^*}. \no
\end{align}
Next, let
$z\in\bbC\big\backslash\big(\si\big(H_{0,\Om}^D\big)\cup\si\big(H_{\Om}^D\big)
\cup\si\big(H_{0,\Om}^N\big)\big)$, then it follows from \eqref{3.28},
\eqref{3.32}, and \eqref{3.35} that
\begin{align} \lb{3.40}
M_\Om^D(z) M_{0,\Om}^D(z)^{-1} &= I_\dOm + \big(M_\Om^D(z) -
M_{0,\Om}^D(z)\big)M_{0,\Om}^D(z)^{-1} \no
\\ &=
I_\dOm + \big(M_{0,\Om}^D(z) - M_{\Om}^D(z)\big)M_{0,\Om}^N(z)
\no
\\ &=
I_\dOm + \ol{\wti\gamma_N \big(H^D_{\Om}-zI_\Om\big)^{-1} V \big[\gamma_N
\big(\big(H^D_{0,\Om}-zI_\Om\big)^{-1}\big)^*\big]^*}
\\ &\quad \times
\gamma_D\big[\gamma_D
\big(\big(H^N_{0,\Om}-zI_\Om\big)^{-1}\big)^*\big]^*. \no
\end{align}
Let $g\in\LdOm$. Then by Theorem \ref{t3.1},
\begin{align}
u=\big[\gamma_D\big(\big(H^N_{0,\Om}-zI_\Om\big)^{-1}\big)^*\big]^*g
\lb{3.41}
\end{align}
is the unique solution of
\begin{align}
(-\Delta-z)u = 0 \,\text{ on }\Om, \quad u\in H^{3/2}(\Om),
\quad \wti\ga_N u = g \,\text{ on }\dOm.
\end{align}
Setting $f=\ga_D u \in H^1(\dOm)$ and utilizing Theorem
\ref{t3.1} once again, one obtains
\begin{align}
u &= -\big[\ga_N \big(H_{0,\Om}^D-\ol{z}I_\Om\big)^{-1}\big]^*f \no
\\ &=
-\big[\gamma_N \big(\big(H^D_{0,\Om}-zI_\Om\big)^{-1}\big)^*\big]^*
\gamma_D\big[\gamma_D
\big(\big(H^N_{0,\Om}-zI_\Om\big)^{-1}\big)^*\big]^*g. \lb{3.43}
\end{align}
Thus, it follows from \eqref{3.41} and \eqref{3.43} that
\begin{align}
\big[\gamma_N \big(\big(H^D_{0,\Om}-zI_\Om\big)^{-1}\big)^*\big]^*
\gamma_D\big[\gamma_D
\big(\big(H^N_{0,\Om}-zI_\Om\big)^{-1}\big)^*\big]^* =
-\big[\gamma_D\big(\big(H^N_{0,\Om}-zI_\Om\big)^{-1}\big)^*\big]^*.
\lb{3.44}
\end{align}
Finally, insertion of \eqref{3.44} into \eqref{3.40} yields
\eqref{3.36}.
\end{proof}
%%%%%%%%%%%%%%%%%%%%%%%%%%%%%%%%%%%%%%%

It follows from \eqref{4.24}--\eqref{4.29a}, $\widetilde \gamma_N$ can be replaced by
$\gamma_N$ on the right-hand side of \eqref{3.35} and \eqref{3.36}.

We note that the right-hand side (and hence the left-hand side) of \eqref{3.36}
permits an analytic continuation to $z\in \si\big(H_{0,\Om}^D\big)$ as long as
$z\notin \big(\si\big(H_{\Om}^D\big)\cup\si\big(H_{0,\Om}^N\big)\big)$.

%%%%%%%%%%%%%%%%%%%%%%%%%%%%%%%%%%%%%%%
%%%%%%%%%%%%%%%%%%%%%%%%%%%%%%%%%%%%%%%
\section{A Multi-Dimensional Variant of a Formula due to Jost and Pais}
\label{s4}
%%%%%%%%%%%%%%%%%%%%%%%%%%%%%%%%%%%%%%%
%%%%%%%%%%%%%%%%%%%%%%%%%%%%%%%%%%%%%%%

In this section we prove our multi-dimensional variants of the Jost and Pais formula as discussed in the introduction.

We start with an elementary comment on determinants which, however, lies at the heart of the matter of our multi-dimensional variant of the one-dimensional Jost and Pais result. Suppose $A \in \cB(\cH_1, \cH_2)$, $B \in \cB(\cH_2, \cH_1)$ with $A B \in \cB_1(\cH_2)$ and $B A \in \cB_1(\cH_1)$. Then,
\begin{equation}
\det (I_{\cH_2}-AB) = \det (I_{\cH_1}-BA).   \lb{4.0}
\end{equation}
Equation \eqref{4.0} follows from the fact that all nonzero eigenvalues of $AB$ and $BA$ coincide including their algebraic multiplicities. The latter fact, in turn, can be derived from the formula
\begin{equation}
A(BA - z I_{\cH_1})^{-1} B= I_{\cH_2} + z(AB - z I_{\cH_2})^{-1}, \quad
z \in \bbC \backslash (\si(AB)\cup\si(BA))
\end{equation}
(and its companion with $A$ and $B$ interchanged), as discussed in
detail by Deift \cite{De78}.

In particular, $\cH_1$ and $\cH_2$ may have different dimensions.
Especially, one of them may be infinite and the other finite, in
which case one of the two determinants in \eqref{4.0} reduces to a
finite determinant. This case indeed occurs in the original
one-dimensional case studied by Jost and Pais \cite{JP51} as
described in detail in \cite{GM03} and the references therein.
In the proof of Theorem \ref{t4.1} below, the role of $\cH_1$ and
$\cH_2$ will be played by $L^2(\Om; d^n x)$ and
$L^2(\partial\Om;d^{n-1} \sigma)$, respectively. In the context
of KdV flows and reflectionless (i.e., generalizations of
soliton-type) potentials represented as Fredholm determinants, a
reduction of such determinants (in some cases to finite determinants)
has also been studied by Kotani \cite{Ko04}, relying on certain
connections to stochastic analysis.

We start with an auxiliary lemma which is of independent interest in the area
of modified Fredholm determinants.

%%%%%%%%%%%%%%%%%%%%%%%%%%%%%%%%%%%%%%%
\begin{lemma} \lb{l4.1}
Let $\cH$ be a separable, complex Hilbert space, and
assume $A,B\in\cB_k(\cH)$ for some fixed $k\in\bbN$. Then there exists a
polynomial $T_k(\cdot,\cdot)$ in $A$ and $B$ with $T_k(A,B)\in\cB_1(\cH)$, such that the following formula holds
\begin{align} \lb{4.3a}
\det{}_k((I_{\cH}-A)(I_{\cH}-B)) =
\det{}_k(I_{\cH}-A)\det{}_k(I_{\cH}-B)e^{\tr(T_k(A,B))}.
\end{align}
Moreover, $T_k(\cdot,\cdot)$ is unique up to cyclic permutations of its
terms, and an explicit formula for $T_k$ may be derived from the representation 
\begin{align} \lb{4.4a}
T_k(A,B) = \sum_{m=k}^{2k-2} P_m(A,B),
\end{align}
where $P_m(\cdot,\cdot)$, $m=1,\dots,2k-2$, denote homogeneous polynomials in
$A$ and $B$ of degree $m$ $($i.e., each term of $P_m(A,B)$ contains precisely the total number $m$ of $A$'s and $B$'s$)$ that one obtains after
rearranging the following expression in powers of $t$,
\begin{align} \lb{4.5a}
\sum_{j=1}^{k-1}\frac{1}{j}\big((tA+tB-t^2AB)^j-(tA)^j-(tB)^j\big) =
\sum_{m=1}^{2k-2} t^m P_m(A,B), \quad t\in\bbR.
\end{align}
In particular, computing $T_k(A,B)$ from \eqref{4.4a} and \eqref{4.5a}, and subsequently using cyclic permutations to simplify the resulting expressions, then yields for the terms $T_k(A,B)$ in \eqref{4.3a}
\begin{align} \lb{4.6a}
T_1(A,B) =& \;0, \no \\
T_2(A,B) =& - AB, \no \\
T_3(A,B) =& - A^2B - AB^2 + \frac12 ABAB, \no \\
T_4(A,B) =& - A^3B - AB^3 -\frac12 ABAB - A^2B^2 + A^2BAB + AB^2AB -
\frac13 ABABAB, \\
T_5(A,B) =& - A^4B - AB^4 - A^3B^2 - A^2B^3 - A^2BAB - AB^2AB +
A^3BAB + AB^3AB \no \\ &+ A^2B^2AB + A^2BAB^2 + \frac23ABABAB +
\frac12 A^2BA^2B + \frac12 AB^2AB^2 \no \\
& - A^2BABAB -AB^2ABAB + \frac14 ABABABAB, \, \text{ etc.} \no
\end{align}
\end{lemma}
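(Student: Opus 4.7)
The plan is to verify \eqref{4.3a} first under the stronger assumption $A,B\in\cB_1(\cH)$, where every expression is manifestly trace-class, and then to extend to $\cB_k(\cH)$ by a density argument. Setting $C := A+B-AB$ so that $I_\cH - C = (I_\cH - A)(I_\cH - B)$, multiplicativity of the (unmodified) Fredholm determinant on $I_\cH + \cB_1(\cH)$ together with the standard identity $\det_k(I_\cH - S) = \det(I_\cH - S)\exp\bigl(\sum_{j=1}^{k-1}\tr(S^j)/j\bigr)$, valid for $S\in\cB_1(\cH)$, yields
\begin{equation*}
\frac{\det{}_k((I_\cH - A)(I_\cH - B))}{\det{}_k(I_\cH - A)\,\det{}_k(I_\cH - B)} = \exp\bigl(\tr(\wti T_k(A,B))\bigr), \quad \wti T_k(A,B) := \sum_{j=1}^{k-1}\frac{C^j - A^j - B^j}{j}.
\end{equation*}
Setting $t=1$ in \eqref{4.5a} expresses $\wti T_k(A,B) = \sum_{m=1}^{2k-2} P_m(A,B)$ as a sum of homogeneous components, with $P_m$ collecting all monomials of total degree $m$ in $A,B$.

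The key step is the vanishing of the trace of the low-degree parts, namely $\tr(P_m(A,B)) = 0$ for $1\le m\le k-1$. For this I would introduce the analytic function
\begin{equation*}
\Phi(t) := \sum_{j=1}^{\infty} \frac{1}{j}\,\tr\bigl((tA+tB-t^2AB)^j - (tA)^j - (tB)^j\bigr),
\end{equation*}
defined for $|t|$ small whenever $A,B\in\cB_1(\cH)$. Using $-\log\det(I_\cH - tX) = \sum_{j\ge 1} \tr((tX)^j)/j$ for trace-class $X$ with $\|tX\|<1$, together with the factorization $I_\cH - (tA+tB-t^2AB) = (I_\cH - tA)(I_\cH - tB)$ and multiplicativity of $\det$, one reads off $\Phi\equiv 0$. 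Extracting the coefficient of $t^m$, and noting that $(tA)^j$ contributes at $j=m$ and the term $(tA+tB-t^2AB)^j$ contributes for $\lceil m/2\rceil\le j\le m$ only, produces
\begin{equation*}
\sum_{\lceil m/2\rceil \leq j \leq m}\frac{\tr(\pi_m(C^j))}{j} - \frac{\tr(A^m)}{m} - \frac{\tr(B^m)}{m} = 0, \quad m\ge 1,
\end{equation*}
where $\pi_m$ denotes projection onto the homogeneous degree-$m$ part. For $1\le m\le k-1$ the full $j$-range lies inside $\{1,\ldots,k-1\}$, so this displayed quantity coincides with $\tr(P_m(A,B))$, proving the claim. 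Consequently,
\begin{equation*}
T_k(A,B) := \sum_{m=k}^{2k-2} P_m(A,B)
\end{equation*}
satisfies $\tr(T_k(A,B))=\tr(\wti T_k(A,B))$ when $A,B\in\cB_1(\cH)$, and \eqref{4.3a} holds with this $T_k$ in the trace-class setting.

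The redefined $T_k$ is automatically $\cB_1$-valued on all of $\cB_k(\cH)\times\cB_k(\cH)$: each monomial in $P_m(A,B)$ with $m\ge k$ is a product of at least $k$ factors drawn from $\{A,B\}\subset\cB_k(\cH)$, so by the Schatten--H\"older inequality it belongs to $\cB_{k/m}(\cH)\subseteq\cB_1(\cH)$ with $\cB_1$-norm dominated by the product of the corresponding $\cB_k$-norms. To remove the trace-class assumption I would then approximate arbitrary $A,B\in\cB_k(\cH)$ by finite-rank $A_n,B_n\to A,B$ in $\|\cdot\|_{\cB_k}$. Continuity of operator multiplication $\cB_k\times\cB_k\to\cB_k$ and of the map $S\mapsto\det{}_k(I_\cH-S)$ on $\cB_k(\cH)$ (a standard property, see e.g.\ \cite[Ch.~9]{Si05}) transfers the left-hand side of \eqref{4.3a} to the limit, while the norm estimate just noted, together with continuity of $\tr$ on $\cB_1(\cH)$, transfers the right-hand side. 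Finally, the explicit list \eqref{4.6a} is obtained by mechanically expanding \eqref{4.5a}, retaining only the degree-$\ge k$ parts, and repeatedly simplifying via $\tr(XY)=\tr(YX)$; such cyclic reshufflings leave $\tr(T_k(A,B))$ invariant, which is the precise content of the uniqueness clause. I expect the vanishing $\tr(P_m(A,B))=0$ for $m<k$ to be the main technical obstacle, since it is the only nontrivial identity; the surrounding algebra around \eqref{4.5a} and the $\cB_k\to\cB_1$ approximation step are routine once it is in hand.
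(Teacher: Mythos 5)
Your proof is correct, and it reaches the conclusion by a genuinely different route for the key vanishing step. The paper, after reducing to $\wti T_k(A,B) = \sum_{j=1}^{k-1}\frac{1}{j}((A+B-AB)^j - A^j - B^j)$ (via \cite[Theorem~9.2]{Si05}, just as you do), proves the stronger, operator-level statement that each $P_m(A,B)$ with $m<k$ vanishes after a finite number of cyclic permutations of its terms: it identifies $\sum_m t^m P_m(A,B)$ with the low-order part of $\ln((I_\cH-tA)(I_\cH-tB))-\ln(I_\cH-tA)-\ln(I_\cH-tB)$ and invokes the Baker--Campbell--Hausdorff formula, whose coefficients are commutators and hence cyclically zero. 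You instead prove only the weaker statement $\tr(P_m(A,B))=0$ for $m<k$, and you do it by a purely scalar argument: pass to the traced series $\Phi(t)$, identify it with $-\log\det\big((I_\cH-tA)(I_\cH-tB)\big)+\log\det(I_\cH-tA)+\log\det(I_\cH-tB)$, and observe it vanishes identically by multiplicativity of $\det$ on $I_\cH+\cB_1(\cH)$, whence all Taylor coefficients vanish. Since only $\tr(T_k)$ enters \eqref{4.3a}, this is sufficient, and it avoids the BCH machinery entirely; the trade-off is that your argument does not by itself show the $P_m$'s collapse as \emph{operators} under cyclic rearrangement (which the paper's BCH argument does), though that stronger fact is not actually used in \eqref{4.3a}. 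The ``uniqueness up to cyclic permutation'' clause is, in both treatments, just the observation that $\tr$ is cyclic-invariant, and both handle it at the same informal level. Your $\cB_k\to\cB_1$ density step (using the Schatten--H\"older inequality to see $T_k(A,B)\in\cB_1$ once each monomial has degree $\ge k$, plus $\cB_k$-continuity of $\det_k$ and $\cB_1$-continuity of $\tr$) matches the paper's concluding ``standard approximation argument'' but is spelled out more carefully.
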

%%%%%%%%%%%%%%%%%%%%%%%%%%%%%%%%%%%%%%%
\begin{proof}
Suppose temporarily that $A,B\in\cB_1(\cH)$. Then it follows from
\cite[Theorem 9.2]{Si05} that
\begin{align}
\det{}_k((I_{\cH}-A)(I_{\cH}-B)) =
\det{}_k(I_{\cH}-A)\det{}_k(I_{\cH}-B)e^{\tr(\wti T_k(A,B))},
\end{align}
where
\begin{align}
\wti T_k(A,B) =
\sum_{j=1}^{k-1}\frac{1}{j}\big((A+B-AB)^j-(A)^j-(B)^j\big),
\end{align}
and hence, by \eqref{4.5a}
\begin{align} \lb{4.9a}
\wti T_k(A,B) = \sum_{m=1}^{2k-2} P_m(A,B).
\end{align}
Since $\tr(\cdot)$ is linear and invariant under cyclic permutation
of its argument, it remains to show that $T_k(A,B)$ in \eqref{4.4a}
and $\wti T_k(A,B)$ in \eqref{4.9a} are equal up to cyclic
permutations of their terms, that is, to show that $P_m(A,B)$ vanish for
$m=1,\dots,k-1$ after a finite number of cyclic permutations of their terms.

Let $\wti P_m(\cdot,\cdot)$, $m\geq1$, denote a sequence of polynomials in
$A$ and $B$, obtained after rearranging the following expression in powers of 
$t\in\bbC$,
\begin{align}
\begin{split}
&\ln((I_{\cH}-tA)(I_{\cH}-tB))-\ln(I_{\cH}-tA)-\ln(I_{\cH}-tB)
\\
&\quad =
\sum_{j=1}^{\infty}\frac{1}{j}\big((tA+tB-t^2AB)^j-(tA)^j-(tB)^j\big)
= \sum_{m=1}^{\infty} t^m \wti P_m(A,B) \, \text{ for $|t|$ sufficiently small.}
\end{split}\lb{4.10a}
\end{align}
Then it follows from \eqref{4.5a} and \eqref{4.10a} that
$P_m(A,B)=\wti P_m(A,B)$ for $m=1,\dots,k-1$, and hence, it suffices
to show that $\wti P_m(A,B)$ vanish for $m=1,\dots,k-1$ after a finite number of cyclic
permutations of their terms. The latter fact now follows from the
Baker--Campbell--Hausdorff (BCH) formula as follows: First, assume
$D, E \in \cB(\cH)$, $\cH$. Then,
\begin{equation} \lb{4.12a}
e^{t D}e^{t E} = e^{t D+t E+F(t)} \, \text{ for $|t|$ sufficiently small,}
\end{equation}
where $F(t)$ is given by a norm convergent infinite sum of certain repeated
commutators involving $D$ and $E$, as discussed, for instance, in \cite{Su77}
(cf.\ also \cite{BC04}). Explicitly, $F$ is of the form
\begin{equation}
F(t)=\sum_{\ell=2}^{\infty} t^\ell F_\ell,  \quad
F_p=\f{1}{p!}\bigg[\f{d^p}{dt^p}\ln\bigg(\sum_{j=0}^\infty \sum_{k=0}^\infty
\f{t^{j+k}}{j! k!} D^j E^k \bigg)\bigg]\bigg|_{t=0}, \quad p\in\bbN, \; p\geq 2,
\end{equation}
where
\begin{equation}
F_2=\f{1}{2}[D,E], \; F_3=\f{1}{6}[F_2,E-D], \;
F_4=\f{1}{12}[[F_2,D],E], \, \text{ etc.}
\end{equation}
That each $F_\ell$, $\ell\geq2$, is indeed at most a finite sum of commutators
follows from a formula derived by Dynkin (cf., e.g., \cite[eqs.\ (1)--(4)]{Bo89},
\cite[eqs.\ (2.5), (2.6), (3.7), (3.8)]{Ot91}).

If in addition, $D,E \in\cB_1(\cH)$, the expression for $F(t)$ is actually convergent in the $\cB_1(\cH)$-norm for $|t|$ sufficiently small. Thus, $F(t)$ vanishes after a finite number of cyclic permutations of each of its coefficients $F_n$.

Next, setting $D=\ln(I_{\cH}-tA)$, $E=\ln(I_{\cH}-tB)$ and taking the natural logarithm in \eqref{4.12a} then implies
\begin{equation}
\ln((I_{\cH}-tA)(I_{\cH}-tB))-\ln(I_{\cH}-tA)-\ln(I_{\cH}-tB) = F(t)
\end{equation}
and hence
\begin{equation}
\ln((I_{\cH}-tA)(I_{\cH}-tB))-\ln(I_{\cH}-tA)-\ln(I_{\cH}-tB) = 0
\end{equation}
after a finite number of cyclic permutations in each of the coefficients $F_\ell$ in
$F(t)=\sum_{\ell=2}^\infty t^\ell F_\ell $. Thus, by \eqref{4.10a}, each $\wti P_m(A,B)$,
$m\geq1$, vanishes after a finite number of cyclic permutations of its terms. Consequently, $P_m(A,B)$ vanish for $m=1,\dots,k-1$ after a finite number of cyclic permutations of their terms.

Finally, to remove the assumption $A,B\in\cB_1(\cH)$, one uses a standard
approximation argument of operators in $\cB_k(\cH)$ by operators in $\cB_1(\cH)$,  together with the fact that both sides of \eqref{4.3a} are well-defined for
$A,B\in\cB_k(\cH)$.
\end{proof}
%%%%%%%%%%%%%%%%%%%%%%%%%%%%%%%%%%%%%%%

Next, we prove an extension of a result in \cite{GLMZ05} to arbitrary space dimensions:

%%%%%%%%%%%%%%%%%%%%%%%%%%%%%%%%%%%%%%%
\begin{theorem} \lb{t4.1}
Assume Hypothesis \ref{h2.6}, let $k\in\bbN$, $k\geq p$, and
$z\in\bbC\big\backslash\big(\si\big(H_{\Om}^D\big)\cup
\si\big(H_{0,\Om}^D\big) \cup \si\big(H_{0,\Om}^N\big)\big)$. Then,
\begin{align}
\ol{\ga_N\big(H_{\Om}^D-zI_{\Om}\big)^{-1}V \big[\ga_D
\big(H_{0,\Om}^N-\ol{z}I_{\Om}\big)^{-1}\big]^*}
\in\cB_p\big(\LdOm\big)\subset\cB_k\big(\LdOm\big) \lb{4.2}
\end{align}
and
\begin{align}
&
\frac{\det{}_k\Big(I_{\Om}+\ol{u\big(H_{0,\Om}^N-zI_{\Om}\big)^{-1}v}\,\Big)}
{\det{}_k\Big(I_{\Om}+\ol{u\big(H_{0,\Om}^D-zI_{\Om}\big)^{-1}v}\,\Big)}  \no \\
&\quad = \det{}_k\Big(I_{\dOm} -
\ol{\ga_N\big(H_{\Om}^D-zI_{\Om}\big)^{-1}V \big[\ga_D
\big(H_{0,\Om}^N-\ol{z}I_{\Om}\big)^{-1}\big]^*} \, \Big)
\exp\big(\tr(T_k(z))\big).   \lb{4.3}
\end{align}
Here $T_k(z)\in \cB_1\big(L^2(\partial\Om; d^{n-1}\sigma)\big)$ denotes one of the cyclic permutations of the
polynomial $T_k(\cdot,\cdot)$ defined in Lemma \ref{l4.1} with the following
choice of $A=A_0(z)$ and $B=B_0(z)$, with $A_0(z)$ and $B_0(z)$ given by
\begin{align}
\begin{split} \lb{4.4}
A_0(z) &= \Big[\,\ol{\ga_D
\big(H_{0,\Om}^N-\ol{z}I_{\Om}\big)^{-1}\ol{\wti u}}\,\Big]^* \,
\ol{\ga_N \big(H_{\Om}^D -zI_{\Om}\big)^{-1}\wti
v}\in\cB_p\big(\LOm\big)\subset\cB_k\big(\LOm\big),
\\
B_0(z) &= -\ol{\wti u\big(H_{0,\Om}^D-zI_{\Om}\big)^{-1}\wti v}
\in\cB_p\big(\LOm\big)\subset\cB_k\big(\LOm\big),
\end{split}
\end{align}
and the functions $u$, $v$, $\wti u$, and $\wti v$ are given by
\begin{align}
u(x) &= \exp(i\arg(V(x)))\abs{V(x)}^{1/2},\quad
v(x)=\abs{V(x)}^{1/2},
\\
\wti u(x) &= \exp(i\arg(V(x)))\abs{V(x)}^{p/p_1},\quad \wti
v(x)=\abs{V(x)}^{p/p_2},
\end{align}
with
\begin{align} \lb{4.6}
p_1=\begin{cases} 3p/2,&n=2, \\ 4p/3, &n\geq3,\end{cases} \qquad
p_2=\begin{cases} 3p,&n=2, \\ 4p, & n\geq3, \end{cases}
\end{align}
and $V=uv=\tilde u \tilde v$. In particular,
\begin{align}
T_2(z) &= \ol{\ga_N\big(H_{0,\Om}^D-zI_{\Om}\big)^{-1}V
\big(H_{\Om}^D-zI_{\Om}\big)^{-1}V \Big[\ga_D
\big(H_{0,\Om}^N-\ol{z}I_{\Om}\big)^{-1}\Big]^*}
\in\cB_1\big(\LdOm\big). \lb{4.5}
\end{align}
\end{theorem}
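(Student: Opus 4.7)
The approach rests on three tools: (i) a Krein-type identity expressing the Dirichlet--Neumann resolvent difference through boundary operators, derived from Theorem~\ref{t3.1}; (ii) the product formula for modified Fredholm determinants, Lemma~\ref{l4.1}; and (iii) the commutation principle $\det{}_k(I-PQ)=\det{}_k(I-QP)$, the $\det{}_k$-analogue of \eqref{4.0}, valid because modified determinants are determined by the non-zero eigenvalues that $PQ$ and $QP$ share. I work with the $\wti u\wti v$ factorization throughout; this is equivalent to the $uv$ factorization on the left-hand side of \eqref{4.3} since $\det{}_k(I_\Om+\ol{u(H_{0,\Om}^\bullet-zI_\Om)^{-1}v}) = \det{}_k(I_\Om+\ol{\wti u(H_{0,\Om}^\bullet-zI_\Om)^{-1}\wti v})$ by the commutation principle (both are determined by the non-zero spectrum of $V(H_{0,\Om}^\bullet-zI_\Om)^{-1}$). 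Write $\wti A_D := \ol{\wti u(H_{0,\Om}^D-zI_\Om)^{-1}\wti v}$ and $\wti A_N := \ol{\wti u(H_{0,\Om}^N-zI_\Om)^{-1}\wti v}$, so that $B_0(z) = -\wti A_D$.

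Applying Theorem~\ref{t3.1} to $w = (H_{0,\Om}^N-zI_\Om)^{-1}f - (H_{0,\Om}^D-zI_\Om)^{-1}f$, the unique $H^{3/2}$ Helmholtz solution with $\wti\gamma_N w = -\gamma_N(H_{0,\Om}^D-zI_\Om)^{-1}f$, yields the Krein identity
\begin{equation*}
(H_{0,\Om}^N-zI_\Om)^{-1} - (H_{0,\Om}^D-zI_\Om)^{-1} = -\big[\gamma_D(H_{0,\Om}^N-\ol{z}I_\Om)^{-1}\big]^*\gamma_N(H_{0,\Om}^D-zI_\Om)^{-1}.
\end{equation*}
Sandwiching with $\wti u$ and $\wti v$ gives $\wti A_N - \wti A_D = -\wti P\wti Q$, where
\begin{equation*}
\wti P := \ol{\wti u[\gamma_D(H_{0,\Om}^N-\ol{z}I_\Om)^{-1}]^*} \in \cB_{p_1}(\LdOm,\LOm), \quad \wti Q := \ol{\gamma_N(H_{0,\Om}^D-zI_\Om)^{-1}\wti v} \in \cB_{p_2}(\LOm,\LdOm),
\end{equation*}
both by Corollary~\ref{c2.5} and the choice \eqref{4.6}; H\"older's inequality for Schatten ideals then places $A_0,B_0\in\cB_p(\LOm)\subset\cB_k(\LOm)$ and the boundary operator $\wti Q(I_\Om+\wti A_D)^{-1}\wti P$ in $\cB_p(\LdOm)$, yielding \eqref{4.2}. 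The Birman--Schwinger identity $\wti Q(I_\Om+\wti A_D)^{-1} = \ol{\gamma_N(H_\Om^D-zI_\Om)^{-1}\wti v}$, obtained by applying $\gamma_N$ to the perturbed resolvent formula \eqref{3.47a}, identifies $A_0(z)$ in \eqref{4.4} with $\wti P\wti Q(I_\Om+\wti A_D)^{-1}$, and a direct expansion using $\wti A_N = \wti A_D - \wti P\wti Q$ confirms the crucial algebraic identity $(I_\Om - A_0)(I_\Om - B_0) = I_\Om + \wti A_N$.

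Lemma~\ref{l4.1} applied to this product factorization gives $\det{}_k(I_\Om+\wti A_N) = \det{}_k(I_\Om-A_0)\det{}_k(I_\Om+\wti A_D)\,e^{\tr T_k(A_0,B_0)}$; dividing through by $\det{}_k(I_\Om+\wti A_D)$ and applying the commutation principle to the factorization $A_0 = \wti P\cdot[\wti Q(I_\Om+\wti A_D)^{-1}]$ yields
\begin{equation*}
\det{}_k(I_\Om - A_0) = \det{}_k\big(I_{\dOm} - \wti Q(I_\Om+\wti A_D)^{-1}\wti P\big) = \det{}_k\big(I_{\dOm} - \ol{\gamma_N(H_\Om^D-zI_\Om)^{-1}V[\gamma_D(H_{0,\Om}^N-\ol{z}I_\Om)^{-1}]^*}\,\big),
\end{equation*}
where the last equality uses $\wti v\wti u = V$, establishing \eqref{4.3}. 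For \eqref{4.5}, Lemma~\ref{l4.1} gives $T_2(A,B) = -AB$, so $-A_0B_0 = \wti P\wti Q(I_\Om+\wti A_D)^{-1}\wti A_D = \wti P\wti Q\cdot\ol{\wti u(H_\Om^D-zI_\Om)^{-1}\wti v}$ (using $\wti A_D(I_\Om+\wti A_D)^{-1} = \ol{\wti u(H_\Om^D-zI_\Om)^{-1}\wti v}$); trace cyclicity together with $\wti v\wti u = V$ rearranges this into the stated form, and $T_2(z)\in\cB_1(\LdOm)$ is the corresponding cyclic permutation placing the two $\gamma$-operators adjacent. The trace-class membership $T_k(z)\in\cB_1$ in general is a H\"older count: $T_k(A_0,B_0)$ is homogeneous of degree $\geq k$ in operators belonging to $\cB_p\subset\cB_k$. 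The principal technical burden lies in the uniform justification of the various closures when composing factors that individually involve unbounded operators; these are all reducible to the Schatten-class estimates of Lemmas~\ref{l2.3}, \ref{l2.4}, \ref{l3.4} and Corollary~\ref{c2.5}.
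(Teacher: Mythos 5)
Your argument reaches the same conclusion by essentially the same route as the paper's proof, but it is streamlined in two ways that are worth noting. First, you derive the Krein-type resolvent identity
\begin{equation*}
(H_{0,\Om}^N-zI_\Om)^{-1}-(H_{0,\Om}^D-zI_\Om)^{-1}
=-\big[\gamma_D(H_{0,\Om}^N-\ol z I_\Om)^{-1}\big]^*\gamma_N(H_{0,\Om}^D-zI_\Om)^{-1}
\end{equation*}
directly from the uniqueness part of Theorem \ref{t3.1}, whereas the paper cites a separate extension of Nakamura's lemma (Lemma \ref{lA.3}, Remark \ref{rA.5}). This is a genuine simplification: it makes Section \ref{s4} self-contained modulo Section \ref{s3}, which is aesthetically preferable. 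Second, you work with the $\wti u\,\wti v$ factorization throughout, while the paper introduces $K_D,K_N$ with the symmetric $u\,v$ factorization, runs the Lemma \ref{l4.1} argument there, and then at the end replaces $u,v$ by $\wti u,\wti v$ using the identity $uv=\wti u\,\wti v$ together with cyclic rearrangement. Your choice eliminates that translation step and lands immediately on the $T_k(A_0,B_0)$ of the theorem statement.

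Both of your shortcuts rest on claims that are true but somewhat under-justified as written. The factorization-independence claim $\det{}_k\big(I_\Om+\ol{u(H_{0,\Om}^\bullet-zI_\Om)^{-1}v}\big)=\det{}_k\big(I_\Om+\ol{\wti u(H_{0,\Om}^\bullet-zI_\Om)^{-1}\wti v}\big)$ is not a single application of the commutation principle; for $V\in L^p(\Om;d^n x)\cap L^\infty(\Om;d^n x)$ it follows from the fact that both operators share their nonzero point spectrum (with algebraic multiplicities) with $(H_{0,\Om}^\bullet-zI_\Om)^{-1}V$, which itself takes two invocations of the commutation principle, and for general $V\in L^p(\Om;d^n x)$ it requires the approximation by $V\cap L^\infty$ and the norm-resolvent/Schatten continuity estimates of Lemma \ref{l2.3} and Corollary \ref{c2.5}. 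Your final sentence gestures at this but the approximation argument should be spelled out, as it is in the paper, because the intermediate identities such as $\wti A_N-\wti A_D=-\wti P\,\wti Q$ and the Birman--Schwinger-type relation $\wti Q\,(I_\Om+\wti A_D)^{-1}=\ol{\ga_N(H_\Om^D-zI_\Om)^{-1}\wti v}$ involve compositions of closures that are only formal when $V$ is unbounded. These are repairable, not fatal, gaps: once they are supplied, the algebraic backbone you present --- the factorization $(I_\Om-A_0)(I_\Om-B_0)=I_\Om+\wti A_N$, Lemma \ref{l4.1}, and the commutation $\det{}_k(I_\Om-A_0)=\det{}_k(I_{\dOm}-\wti Q(I_\Om+\wti A_D)^{-1}\wti P)$ --- is exactly right, as is your derivation of the explicit form of $T_2(z)$ in \eqref{4.5}.
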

%%%%%%%%%%%%%%%%%%%%%%%%%%%%%%%%%%%%%%%
\begin{proof}
From the outset we note that the left-hand side of \eqref{4.3} is
well-defined by \eqref{2.35}. Let
$z\in\bbC\big\backslash\big(\si\big(H_{\Om}^D\big) \cup
\si\big(H_{0,\Om}^D\big) \cup \si\big(H_{0,\Om}^N\big)\big)$ and
note that $\f1{p_1}+\f{1}{p_2}=\f1p$ for all $n\geq2$, and hence
$V=uv=\wti u \wti v$.

Next, we introduce
\begin{equation} \lb{4.7}
K_D(z)=-\ol{u\big(H_{0,\Om}^D-zI_{\Om}\big)^{-1}v}, \quad
K_N(z)=-\ol{u\big(H_{0,\Om}^N-zI_{\Om}\big)^{-1}v}
\end{equation}
(cf.\ \eqref{B.4}) and note that by Theorem \ref{tB.3}
\begin{align}
[I_{\Om}-K_D(z)]^{-1} \in\cB\big(\LOm\big), \quad
z\in\bbC\big\backslash\big(\si\big(H_{\Om}^D\big)\cup\si\big(H_{0,\Om}^D\big)\big).
\end{align}
Then Lemma \ref{l4.1} with $A=\wti A_0(z)$ and $B=\wti B_0(z)$
defined by
\begin{align}
\wti A_0(z) &= I_\Om - (I_\Om-K_N(z))[I_{\Om}-K_D(z)]^{-1} =
(K_N(z)-K_D(z))[I_{\Om}-K_D(z)]^{-1}, \lb{4.10}
\\
\wti B_0(z) &= K_D(z) = -\ol{u\big(H_{0,\Om}^D-zI_{\Om}\big)^{-1}v},
\lb{4.10A}
\end{align}
yields
\begin{align}
&\frac{\det{}_k\Big(I_{\Om}+\ol{u\big(H_{0,\Om}^N-zI_{\Om}\big)^{-1}v}\,\Big)}
{\det{}_k\Big(I_{\Om}+\ol{u\big(H_{0,\Om}^D-zI_{\Om}\big)^{-1}v}\,\Big)}
=
\frac{\det{}_k\big(I_{\Om}-K_N(z)\big)}{\det{}_k\big(I_{\Om}-K_D(z)\big)}
\no \\
&\quad =
\det{}_k\big(I_{\Om}-(K_N(z)-K_D(z))[I_{\Om}-K_D(z)]^{-1}\big)
\exp\big(\tr(T_k(\wti A_0(z),\wti B_0(z)))\big), \lb{4.12}
\end{align}
where $T_k(\cdot,\cdot)$ is the polynomial defined in \eqref{4.4a}.
Explicit formulas for the first few $T_k$ are computed in
\eqref{4.6a}.

Next, temporarily suppose that $V\in L^p(\Om;d^nx)\cap
L^\infty(\Om;d^nx)$. Using Lemma \ref{lA.3} (an extension of a
result of Nakamura \cite[Lemma 6]{Na01}) and Remark \ref{rA.5} (cf.\
\eqref{Na1-bis}), one finds
\begin{align}
\begin{split}
K_N(z)-K_D(z) &= \ol{u\big[\big(H_{0,\Om}^D-zI_{\Om}\big)^{-1}-
\big(H_{0,\Om}^N -zI_{\Om}\big)^{-1}\big]v}
\\ &=
\ol{u\big[\ga_D \big(H_{0,\Om}^N-\ol{z}I_{\Om}\big)^{-1}\big]^*}\,
\ol{\ga_N \big(H_{0,\Om}^D -zI_{\Om}\big)^{-1}v}
\\ &=
\Big[\,\ol{\ga_D
\big(H_{0,\Om}^N-\ol{z}I_{\Om}\big)^{-1}\ol{u}}\,\Big]^* \,
\ol{\ga_N \big(H_{0,\Om}^D -zI_{\Om}\big)^{-1}v}.
\end{split}\lb{4.13}
\end{align}
Inserting \eqref{4.13} into \eqref{4.10} and utilizing \eqref{4.7}
and the following resolvent identity which follows from \eqref{B.5},
\begin{align} \lb{4.13a}
\ol{\big(H_{\Om}^D-zI_{\Om}\big)^{-1} v} =
\ol{\big(H_{0,\Om}^D-zI_{\Om}\big)^{-1} v} \Big[I_{\Om}+\ol{
u\big(H_{0,\Om}^D-zI_{\Om}\big)^{-1} v}\,\Big]^{-1},
\end{align}
one obtains the following equality for $\wti A_0(z)$,
\begin{align}
\begin{split} \lb{4.4A}
\wti A_0(z) &= \Big[\,\ol{\ga_D
\big(H_{0,\Om}^N-\ol{z}I_{\Om}\big)^{-1}\ol{u}}\,\Big]^* \,
\ol{\ga_N \big(H_{\Om}^D -zI_{\Om}\big)^{-1}v}.
\end{split}
\end{align}
Moreover, insertion of \eqref{4.13} into \eqref{4.12} yields
\begin{align} \lb{4.14}
&\frac{\det{}_k\Big(I_{\Om}+\ol{u\big(H_{0,\Om}^N-zI_{\Om}\big)^{-1}v}\,\Big)}
{\det{}_k\Big(I_{\Om}+\ol{u\big(H_{0,\Om}^D-zI_{\Om}\big)^{-1}v}\,\Big)}
\no
\\
&\quad = \det{}_k\Big(I_{\Om} - \Big[\,\ol{\ga_D
\big(H_{0,\Om}^N-\ol{z}I_{\Om}\big)^{-1}\ol{u}}\,\Big]^* \ol{\ga_N
\big(H_{0,\Om}^D-zI_{\Om}\big)^{-1}v}
\Big[I_{\Om}+\ol{u\big(H_{0,\Om}^D-zI_{\Om}\big)^{-1}v}\,\Big]^{-1}\Big)
\\
&\qquad \times \exp\big(\tr(T_k(\wti A_0(z),\wti B_0(z)))\big). \no
\end{align}
Utilizing Corollary \ref{c2.5} with $p_1$ and $p_2$ as in
\eqref{4.6}, one finds 
\begin{align}
\ol{\ga_D \big(H_{0,\Om}^N-\ol{z}I_{\Om}\big)^{-1}\ol{u}}
&\in\cB_{p_1}\big(\LOm,\LdOm\big),
\\
\ol{\ga_N\big(H_{0,\Om}^D-zI_{\Om}\big)^{-1}v}
&\in\cB_{p_2}\big(\LOm,\LdOm\big),
\end{align}
and hence,
\begin{align}
\Big[\,\ol{\ga_D
\big(H_{0,\Om}^N-\ol{z}I_{\Om}\big)^{-1}\ol{u}}\,\Big]^* \ol{\ga_N
\big(H_{0,\Om}^D-zI_{\Om}\big)^{-1}v} &\in \cB_p\big(\LOm\big)
\subset\cB_k\big(\LOm\big),
\\
\ol{\ga_N \big(H_{0,\Om}^D-zI_{\Om}\big)^{-1}v} \Big[\,\ol{\ga_D
\big(H_{0,\Om}^N-\ol{z}I_{\Om}\big)^{-1}\ol{u}}\,\Big]^* &\in
\cB_p\big(\LdOm\big) \subset\cB_k\big(\LdOm\big).
\end{align}
Then, using the fact that
\begin{align}
\Big[I_{\Om}+\ol{u\big(H_{0,\Om}^D-zI_{\Om}\big)^{-1}v}\,\Big]^{-1}
\in \cB\big(\LOm\big), \quad z\in\bbC\big\backslash
\big(\si\big(H_{\Om}^D\big)\cup\si\big(H_{0,\Om}^D\big)\big),
\end{align}
one applies the idea expressed in formula \eqref{4.0} and rearranges
the terms in \eqref{4.14} as follows:
\begin{align} \lb{4.20}
&\frac{\det{}_k\Big(I_{\Om}+\ol{u\big(H_{0,\Om}^N-zI_{\Om}\big)^{-1}v}\,\Big)}
{\det{}_k\Big(I_{\Om}+\ol{u\big(H_{0,\Om}^D-zI_{\Om}\big)^{-1}v}\,\Big)}
\\
&\quad = \det{}_k\Big(I_{\dOm} - \ol{\ga_N
\big(H_{0,\Om}^D-zI_{\Om}\big)^{-1}v}
\Big[I_{\Om}+\ol{u\big(H_{0,\Om}^D-zI_{\Om}\big)^{-1}v}\,\Big]^{-1}
\Big[\, \ol{ \ga_D
\big(H_{0,\Om}^N-\ol{z}I_{\Om}\big)^{-1}\ol{u}}\,\Big]^*\Big) \no
\\
&\qquad \times \exp\big(\tr(T_k(\wti A_0, \wti B_0))\big). \no
\end{align}
Similarly, using the cyclicity property of $\tr(\cdot)$, one
rearranges $T_k \big(\wti A_0(z),\wti B_0(z)\big)$ to get an operator on
$\LdOm$ which in the following we denote by $T_k(z)$. This is always
possible since each term of $T_k \big(\wti A_0(z),\wti B_0(z)\big)$ has at
least one factor of $\wti A_0(z)$. Then using equalities
\eqref{4.4}, \eqref{4.10A}, \eqref{4.4A}, and $uv=\ti u \ti v$, one
concludes that $T_k(z)$ is a cyclic permutation of $T_k(A_0,B_0)$
with $A_0(z)$ and $B_0(z)$ given by \eqref{4.4}. In particular,
rearranging $T_2 \big(\wti A_0(z),\wti B_0(z)\big)=-\wti A_0(z) \wti B_0(z)$
or equivalently $T_2(A_0(z),B_0(z))=-A_0(z)B_0(z)$, one obtains
$T_2(z)=-\wti B_0(z) \wti A_0(z) = -B_0(z) A_0(z)$, and hence
equality \eqref{4.5}. Thus, \eqref{4.3}, subject to the extra
assumption $V\in L^p(\Om;d^n x)\cap L^\infty(\Om;d^n x)$, follows
from \eqref{4.13a} and \eqref{4.20}.

Finally, assuming only $V\in L^p(\Om;d^n x)$ and utilizing Theorem
\ref{tB.3}, Lemma \ref{l2.3}, and Corollary \ref{c2.5} once again,
one obtains
\begin{align}
\Big[I_{\Om}+\ol{\wti u\big(H_{0,\Om}^D-zI_{\Om}\big)^{-1} \wti
v}\,\Big]^{-1} &\in \cB\big(\LOm\big), \lb{4.24}
\\
\wti u\big(H_{0,\Om}^D-zI_{\Om}\big)^{-p/p_1} &\in
\cB_{p_1}\big(\LOm\big), \lb{4.25}
\\
\wti v\big(H_{0,\Om}^D-zI_{\Om}\big)^{-p/p_2} &\in
\cB_{p_2}\big(\LOm\big), \lb{4.26}
\\
\ol{\ga_D \big(H_{0,\Om}^N-zI_{\Om}\big)^{-1}\wti u} &\in
\cB_{p_1}\big(\LOm,\LdOm\big), \lb{4.27}
\\
\ol{\ga_N\big(H_{0,\Om}^D-zI_{\Om}\big)^{-1}\wti v} &\in
\cB_{p_2}\big(\LOm,\LdOm\big), \lb{4.28}
\end{align}
and thus,
\begin{equation}
\ol{\wti u\big(H_{0,\Om}^D-zI_{\Om}\big)^{-1}\wti v} \in
\cB_p\big(\LOm\big)\subset\cB_k\big(\LOm\big).  \lb{4.29}
\end{equation}
Relations \eqref{4.24}--\eqref{4.29} together with the following
resolvent identity that follows from \eqref{B.5},
\begin{align}
\ol{\big(H_{\Om}^D-zI_{\Om}\big)^{-1}\wti v} =
\ol{\big(H_{0,\Om}^D-zI_{\Om}\big)^{-1}\wti v} \Big[I_{\Om}+\ol{\wti
u\big(H_{0,\Om}^D-zI_{\Om}\big)^{-1}\wti v}\,\Big]^{-1},   \lb{4.29a}
\end{align}
prove the $\cB_k$-property \eqref{4.2}, \eqref{4.4}, and \eqref{4.5},
and hence, the left- and the right-hand sides of \eqref{4.3} are
well-defined for $V\in L^p(\Om;d^nx)$. Thus, using \eqref{2.8},
\eqref{2.27}, \eqref{2.28}, the continuity of $\det{}_k(\cdot)$ with
respect to the $\cB_k$-norm $\|\cdot\|_{\cB_k\big(\LOm\big)}$, the
continuity of $\tr(\cdot)$ with respect to the trace norm
$\|\cdot\|_{\cB_1\big(\LOm\big)}$, and an approximation of $V\in
L^p(\Om;d^nx)$ by a sequence of potentials $V_j \in
L^p(\Om;d^nx)\cap L^\infty(\Om;d^nx)$, $j\in\bbN$, in the norm of
$L^p(\Om;d^nx)$ as $j\uparrow\infty$, then extends the result from
$V\in L^p(\Om;d^nx)\cap L^\infty(\Om;d^nx)$ to $V\in L^p(\Om;d^nx)$.
\end{proof}
%%%%%%%%%%%%%%%%%%%%%%%%%%%%%%%%%%%%%%%

Given these preparations, we are ready for the principal result of this paper, the multi-dimensional analog of Theorem \ref{t1.2}:

%%%%%%%%%%%%%%%%%%%%%%%%%%%%%%%%%%%%%%%
\begin{theorem} \lb{t4.2}
Assume Hypothesis \ref{h2.6}, let $k\in\bbN$, $k\geq p$, and
$z\in\bbC\big\backslash\big(\si\big(H_{\Om}^D\big)\cup
\si\big(H_{0,\Om}^D\big) \cup \si\big(H_{0,\Om}^N\big)\big)$. Then,
\begin{equation}
M_{\Om}^{D}(z)M_{0,\Om}^{D}(z)^{-1} - I_{\partial\Om} = -
\ol{\ga_N\big(H_{\Om}^D-zI_{\Om}\big)^{-1} V
\Big[\ga_D \big(H_{0,\Om}^N-\ol{z}I_{\Om}\big)^{-1}\Big]^*} \in
\cB_k\big(L^2(\partial\Om; d^{n-1}\sigma)\big)
\end{equation}
and
\begin{align}
&
\frac{\det{}_k\Big(I_{\Om}+\ol{u\big(H_{0,\Om}^N-zI_{\Om}\big)^{-1}v}\,\Big)}
{\det{}_k\Big(I_{\Om}+\ol{u\big(H_{0,\Om}^D-zI_{\Om}\big)^{-1}v}\,\Big)} \no \\
& \quad = \det{}_k\Big(I_{\dOm} -
\ol{\ga_N\big(H_{\Om}^D-zI_{\Om}\big)^{-1} V
\big[\ga_D\big(H_{0,\Om}^N-\ol{z}I_{\Om}\big)^{-1}\big]^*}\,\Big)
\exp\big(\tr(T_k(z))\big)   \lb{4.30}  \\
& \quad = \det{}_k\big(M_{\Om}^{D}(z)M_{0,\Om}^{D}(z)^{-1}\big)
\exp\big(\tr(T_k(z))\big)   \lb{4.31}
\end{align}
with $T_k(z)$ defined in Theorem \ref{t4.1}.
\end{theorem}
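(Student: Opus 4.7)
The proof assembles two pieces that are already in place: Theorem \ref{t4.1}, which handles the determinant identity \eqref{4.30}, and Lemma \ref{l3.5}, which expresses the operator $M_{\Om}^{D}(z)M_{0,\Om}^{D}(z)^{-1}$ precisely in the form appearing inside the ${\det}_k$ on the right-hand side of \eqref{4.30}. My plan is therefore to prove Theorem \ref{t4.2} as a direct consequence of these two results, with one small technical adjustment concerning the replacement of $\wti\gamma_N$ by $\gamma_N$.

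First I would invoke formula \eqref{3.36} of Lemma \ref{l3.5}, which under Hypothesis \ref{h2.6} and for $z\in\bbC\big\backslash\big(\si\big(H_{0,\Om}^D\big)\cup\si\big(H_{\Om}^D\big)\cup\si\big(H_{0,\Om}^N\big)\big)$ yields
\begin{equation}
M_\Om^D(z) M_{0,\Om}^D(z)^{-1} = I_\dOm - \ol{\wti\gamma_N \big(H^D_{\Om}-zI_\Om\big)^{-1} V \big[\gamma_D \big(\big(H^N_{0,\Om}-\ol{z}I_\Om\big)^{-1}\big)\big]^*}.
\end{equation}
Next, I would cite the remark immediately following the proof of Lemma \ref{l3.5} to the effect that in this expression $\wti\gamma_N$ may be replaced by $\gamma_N$; the justification is exactly the chain of boundedness/trace-ideal properties \eqref{4.24}--\eqref{4.29a} developed in the proof of Theorem \ref{t4.1}, which show that the operator inside the closure maps $\LOm$ continuously into a Sobolev space on which the full Neumann trace $\wti\gamma_N$ agrees with $\gamma_N$ after closure. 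This produces the first displayed identity of Theorem \ref{t4.2}, and the fact that the resulting operator belongs to $\cB_p(\LdOm)\subset\cB_k(\LdOm)$ is then exactly \eqref{4.2} from Theorem \ref{t4.1}.

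Once that identification is in hand, \eqref{4.30} is nothing other than the conclusion \eqref{4.3} of Theorem \ref{t4.1}, so no additional work is required there. Finally, \eqref{4.31} is obtained by substituting the operator identity just established into the ${\det}_k$ on the right-hand side of \eqref{4.30}; since the two operators being compared inside ${\det}_k$ are literally equal as bounded operators on $\LdOm$, the determinants coincide and the factor $\exp(\tr(T_k(z)))$ carries over unchanged.

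The only genuine subtlety, and the one place where care is required, is the passage from $\wti\gamma_N$ to $\gamma_N$ inside the closure: one must verify that $\big(H^D_{\Om}-zI_\Om\big)^{-1} V\big[\gamma_D \big(\big(H^N_{0,\Om}-\ol{z}I_\Om\big)^{-1}\big)\big]^*$ takes values in a subspace of $H^{3/2}(\Om)$ (or at least a space smooth enough for the classical Neumann trace to be defined) on a dense set, so that the distinction between $\wti\gamma_N$ and $\gamma_N$ disappears after taking the operator closure. This is precisely what \eqref{4.24}--\eqref{4.29a}, combined with the mapping properties \eqref{3.40a}, \eqref{3.41a} from Lemma \ref{l3.4}, accomplish, and I would simply reference those displays rather than reprove them. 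Everything else in the statement is a formal consequence of already-established results.
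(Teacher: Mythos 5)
Your proposal is correct and follows essentially the same route as the paper, which disposes of Theorem \ref{t4.2} in one line as ``follows from combining Lemma \ref{l3.5} and Theorem \ref{t4.1}.'' You have simply unpacked that combination: \eqref{3.36} supplies the operator identity (with $\wti\gamma_N$), the remark following Lemma \ref{l3.5} -- justified by \eqref{4.24}--\eqref{4.29a} -- licenses replacing $\wti\gamma_N$ by $\gamma_N$ inside the closure, \eqref{4.2} gives the $\cB_k$-membership, \eqref{4.3} gives \eqref{4.30}, and substituting the operator identity into the $\det_k$ gives \eqref{4.31}.
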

%%%%%%%%%%%%%%%%%%%%%%%%%%%%%%%%%%%%%%%
\begin{proof}
The result follows from combining Lemma \ref{l3.5} and Theorem \ref{t4.1}.
\end{proof}
%%%%%%%%%%%%%%%%%%%%%%%%%%%%%%%%%%%%%%%

%%%%%%%%%%%%%%%%%%%%%%%%%%%%%%%%%%%%%%%
\begin{remark}  \lb{r4.4}
Assume Hypothesis \ref{h2.6}, let $k\in\bbN$, $k\geq p$, and
$z\in\bbC\big\backslash\big(\si\big(H_{\Om}^N\big)\cup
\si\big(H_{0,\Om}^D\big) \cup \si\big(H_{0,\Om}^N\big)\big)$. Then,
\begin{equation}
M_{0,\Om}^{N}(z)^{-1}M_{\Om}^{N}(z) - I_{\partial\Om} =
\ol{\ga_N \big(H_{0,\Om}^D-zI_{\Om}\big)^{-1}V \Big[\ga_D \big(\big(H_{\Om}^N-z
I_{\Om}\big)^{-1}\big)^*\Big]^*} \in
\cB_k\big(L^2(\partial\Om; d^{n-1}\sigma)\big)   \lb{4.32}
\end{equation}
and one can also prove the following analog of \eqref{4.30}:
\begin{align}
&\frac{\det{}_k\Big(I_{\Om}+\ol{u(H_{0,\Om}^D-zI_{\Om})^{-1}v}\,\Big)}
{\det{}_k\Big(I_{\Om}+\ol{u(H_{0,\Om}^N-zI_{\Om})^{-1}v}\,\Big)} \no \\
&\quad = \det{}_k\Big(I_{\dOm} +
\ol{\ga_N(H_{0,\Om}^D-zI_{\Om})^{-1}V \big[\ga_D((H_{\Om}^N-z
I_{\Om})^{-1})^*\big]^*}\,\Big) \exp\big(\tr(T_k(z))\big), \lb{4.33} \\
& \quad = \det{}_k\big(M_{0,\Om}^{N}(z)^{-1} M_{\Om}^{N}(z)\big)
\exp\big(\tr(T_k(z))\big)   \lb{4.34}
\end{align}
where $T_k(z)$ denotes one of the cyclic permutations of the
polynomial $T_k(A,B)$ defined in Lemma \ref{l4.1} with the following
choice of $A=A_1(z)$ and $B=B_1(z)$,
\begin{align}
\begin{split} \no
A_1(z) &= -\Big[\,\ol{\ga_D
\big(H_{\Om}^N-\ol{z}I_{\Om}\big)^{-1}\ol{\wti u}}\,\Big]^* \,
\ol{\ga_N \big(H_{0,\Om}^D -zI_{\Om}\big)^{-1}\wti
v}\in\cB_p\big(\LOm\big)\subset\cB_k\big(\LOm\big),
\\
B_1(z) &= -\ol{\wti u\big(H_{0,\Om}^N-zI_{\Om}\big)^{-1}\wti v}
\in\cB_p\big(\LOm\big)\subset\cB_k\big(\LOm\big),
\end{split}
\end{align}
and the functions $u$, $v$, $\wti u$, and $\wti v$ are given by
\begin{align}
u(x) &= \exp(i\arg(V(x)))\abs{V(x)}^{1/2},\quad
v(x)=\abs{V(x)}^{1/2},
\\
\wti u(x) &= \exp(i\arg(V(x)))\abs{V(x)}^{p/p_1},\quad \wti
v(x)=\abs{V(x)}^{p/p_2},
\end{align}
with
\begin{align}
p_1=\begin{cases} 3p/2, &n=2, \\ 4p/3, &n\geq3,\end{cases} \qquad
p_2=\begin{cases}3p, &n=2, \\ 4p, & n\geq3, \end{cases}
\end{align}
and $V=uv=\tilde u \tilde v$. In particular,
\begin{align}
T_2(z) &= -\ol{\ga_N\big(H_{0,\Om}^D-zI_{\Om}\big)^{-1}V
\big(H_{\Om}^N-zI_{\Om}\big)^{-1}V
\Big[\ga_D\big(H_{0,\Om}^N-\ol{z}I_{\Om}\big)^{-1}\Big]^*}.
\end{align}
\end{remark}
%%%%%%%%%%%%%%%%%%%%%%%%%%%%%%%%%%%%%

%%%%%%%%%%%%%%%%%%%%%%%%%%%%%%%%%%%%%%%
\begin{remark} \lb{r4.5}
It seems tempting at this point to turn to an abstract version of Theorem \ref{t4.2} using  the notion of boundary value spaces (see, e.g., \cite{BL06}, \cite{DM91},
\cite{DM95}, \cite[Ch.\ 3]{GG91} and the references therein). However, the analogs of the necessary mapping and trace ideal properties as recorded in 
Sections \ref{s2} and \ref{s3} do not seem to be available at the present time for general self-adjoint extensions of a densely defined, closed symmetric operator (respectively, maximal accretive extensions of closed accretive operators) in a separable complex Hilbert space. For this reason we decided to start with the special, but important case of multi-dimensional Schr\"odinger operators.
\end{remark}
%%%%%%%%%%%%%%%%%%%%%%%%%%%%%%%%%%%%%%%

\vspace*{.5mm}

A few comments are in order at this point:

The sudden appearance of the exponential term $\exp(\tr(T_k(z)))$ in
\eqref{4.30}, \eqref{4.31}, and \eqref{4.32}, when compared to the one-dimensional
case, is due to the necessary use of the modified determinant
${\det}_k(\cdot)$, $k\geq 2$, in Theorems \ref{t4.1} and \ref{t4.2}.

As mentioned in the introduction, the multi-dimensional extension
\eqref{4.30} of \eqref{1.16}, under the stronger hypothesis $V\in
L^2(\Om; d^n x)$, $n=2,3$, first appeared in \cite{GLMZ05}. However,
the present results in Theorem \ref{t4.2} go decidedly beyond those
in \cite{GLMZ05} in the following sense: \\
$(i)$ the class of domains $\Omega$ permitted by Hypothesis \ref{h2.1}
is substantiallly expanded as compared to \cite{GLMZ05}. \\
$(ii)$ For $n=2,3$, the conditions on $V$ satisfying Hypothesis \ref{h2.6} are
now nearly optimal by comparison with the Sobolev inequality
(cf.\ Cheney \cite{Ch84}, Reed and Simon \cite[Sect.\ IX.4]{RS75},
Simon \cite[Sect.\ I.1]{Si71}).  \\
$(iii)$ The multi-dimensional extension \eqref{4.31} of \eqref{1.17}
invoking Dirichlet-to-Neumann maps is a new (and the most significant)
result in this paper.  \\
$(iv)$ While the results in \cite{GLMZ05} were confined to dimensions
$n=2,3$, all results in this paper are now derived in the general
case $n\in\bbN$, $n\geq 2$.

The principal reduction in Theorem \ref{t4.2} reduces (a ratio of)
modified Fredholm determinants associated with operators in $L^2(\Om;
d^n x)$ on the left-hand side of \eqref{4.30} to modified Fredholm
determinants associated with operators in $L^2(\partial\Om; d^{n-1}
\sigma)$ on the right-hand side of \eqref{4.30} and especially, in
\eqref{4.31}. This is the analog of the reduction described in the
one-dimensional context of Theorem \ref{t1.2}, where $\Om$
corresponds to the half-line $(0,\infty)$ and its boundary
$\partial\Om$ thus corresponds to the one-point set $\{0\}$.

In the context of elliptic operators on smooth $k$-dimensional
manifolds, the idea of reducing a ratio of zeta-function regularized
determinants to a calculation over the $k-1$-dimensional boundary has
been studied by Forman \cite{Fo87}. He also pointed out that
if the manifold consists of an interval, the special  case of a pair
of boundary points then permits one to reduce the zeta-function
regularized determinant to the determinant of a finite-dimensional
matrix. The latter case is of course an analog of the one-dimensional
Jost and Pais formula mentioned in the introduction (cf.\ Theorems
\ref{t1.1} and  \ref{t1.2}). Since then, this topic has been further
developed in various directions and we refer, for instance, to
Burghelea, Friedlander, and Kappeler \cite {BFK91}, \cite {BFK92},
\cite {BFK93}, \cite{BFK95}, Carron \cite{Ca02}, Friedlander \cite{Fr05}, 
Guillarmou and Guillop\'e \cite{GG07}, M\"uller \cite{Mu98}, Okikiolu 
\cite{Ok95}, \cite{Ok95a}, Park and Wojciechowski \cite{PW05}, \cite{PW05a}, 
and the references therein.

\medskip

Combining Theorems \ref{t4.2} and  \ref{tB.3} yields the following applications
of \eqref{4.30} and \eqref{4.33}:

%%%%%%%%%%%%%%%%%%%%%%%%%%%%%%%%%%%%%%%
\begin{theorem}  \lb{t4.6}
Assume Hypothesis \ref{h2.6} and $k\in\bbN$, $k\geq p$. \\
$(i)$One infers that
\begin{align}
\begin{split}
&\text{for all } \, z\in\bbC\big\backslash \big(\si\big(H_{\Om}^D\big)\cup
\si\big(H_{0,\Om}^D\big) \cup \si\big(H_{0,\Om}^N\big)\big), \text{ one has }
z\in\si\big(H_{\Om}^N\big)  \\
&\quad \text{if and only if } \, \det{}_k\Big( I_{\dOm} -
\ol{\ga_N \big(H_{\Om}^D-zI_{\Om}\big)^{-1}V
\big[\ga_D \big(H_{0,\Om}^N-\ol{z}I_{\Om}\big)^{-1}\big]^*}\, \Big)=0.  \lb{4.49}
\end{split}
\end{align}
$(ii)$ Similarly, one infers that
\begin{align}
\begin{split}
& \text{for all } \, z \in \bbC \big\backslash
\big(\si\big(H_{\Om}^N\big)\cup\si\big(H_{0,\Om}^N\big)\cup\si\big(H_{0,\Om}^D\big)\big),
\text{ one has } z\in\si\big(H_{\Om}^D\big)  \\
&\quad \text{if and only if }\, \det{}_k\Big(I_{\dOm} +
\ol{\ga_N \big(H_{0,\Om}^D-zI_{\Om}\big)^{-1}V \big[\ga_D \big(\big(H_{\Om}^N-z
I_{\Om}\big)^{-1}\big)^*\big]^*} \,\Big)=0.   \lb{4.50}
\end{split}
\end{align}
\end{theorem}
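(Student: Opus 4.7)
The plan is to derive both equivalences directly from the factorization identities in Theorem \ref{t4.2} and Remark \ref{r4.4}, combined with the modified-Fredholm version of the Birman--Schwinger principle (Theorem \ref{tB.3}). The key observation is that the exponential factor $\exp(\tr(T_k(z)))$ is never zero, so the vanishing of the boundary determinant is controlled entirely by the ratio of the two $L^2(\Om;d^n x)$-determinants.

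For part $(i)$, I would fix $z\in\bbC\big\backslash\big(\si(H_{\Om}^D)\cup\si(H_{0,\Om}^D)\cup\si(H_{0,\Om}^N)\big)$ and apply \eqref{4.30}. Since $z\notin \si(H_{0,\Om}^D)$, Theorem \ref{tB.3} identifies the zeros of $z\mapsto \det{}_k\big(I_\Om+\ol{u(H_{0,\Om}^D-zI_\Om)^{-1}v}\big)$ with $\si(H_\Om^D)$; by hypothesis $z\notin \si(H_\Om^D)$, so the denominator on the left-hand side of \eqref{4.30} is nonzero. Likewise, since $z\notin\si(H_{0,\Om}^N)$, Theorem \ref{tB.3} gives that the numerator $\det{}_k\big(I_\Om+\ol{u(H_{0,\Om}^N-zI_\Om)^{-1}v}\big)$ vanishes precisely when $z\in\si(H_\Om^N)$. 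Dividing through by the nonvanishing exponential factor in \eqref{4.30} shows that $z\in\si(H_\Om^N)$ if and only if
\begin{equation*}
\det{}_k\Big(I_{\dOm} - \ol{\ga_N(H_\Om^D-zI_\Om)^{-1}V\big[\ga_D(H_{0,\Om}^N-\ol{z}I_\Om)^{-1}\big]^*}\,\Big)=0,
\end{equation*}
which is precisely \eqref{4.49}.

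For part $(ii)$, the argument is structurally identical but uses \eqref{4.33} from Remark \ref{r4.4} in place of \eqref{4.30}. Now I would fix $z\in\bbC\big\backslash\big(\si(H_\Om^N)\cup\si(H_{0,\Om}^N)\cup\si(H_{0,\Om}^D)\big)$. Since $z\notin\si(H_{0,\Om}^N)$ and $z\notin\si(H_\Om^N)$, Theorem \ref{tB.3} forces the denominator $\det{}_k\big(I_\Om+\ol{u(H_{0,\Om}^N-zI_\Om)^{-1}v}\big)$ in \eqref{4.33} to be nonzero. Meanwhile, since $z\notin\si(H_{0,\Om}^D)$, the numerator $\det{}_k\big(I_\Om+\ol{u(H_{0,\Om}^D-zI_\Om)^{-1}v}\big)$ vanishes exactly when $z\in\si(H_\Om^D)$. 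Cancelling the exponential factor in \eqref{4.33} again, one concludes that $z\in\si(H_\Om^D)$ if and only if the boundary determinant in \eqref{4.50} vanishes.

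There is no serious obstacle here; once \eqref{4.30} and \eqref{4.33} are in hand, the argument is a bookkeeping exercise on zeros of entire (or at least meromorphic-on-resolvent-set) functions. The one technical point to verify is that the hypothesis $z\notin \si(H_{0,\Om}^D)\cup \si(H_{0,\Om}^N)$ placed on $z$ is precisely what ensures that all Fredholm determinants appearing are well-defined and that Theorem \ref{tB.3} applies to both numerator and denominator; this is already built into the statement of Theorem \ref{t4.6}, so no additional care is required.
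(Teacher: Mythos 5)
Your proof is correct and follows essentially the same route as the paper: apply the Birman--Schwinger principle (Theorem \ref{tB.3}) to identify the zeros of the numerator determinant with $\si(H_\Om^N)$ (respectively, $\si(H_\Om^D)$), then read off the equivalence from \eqref{4.30} (respectively, \eqref{4.33}). The paper's proof is terser; you make explicit the two auxiliary facts — that the denominator determinant is nonzero when $z\notin\si(H_\Om^D)\cup\si(H_{0,\Om}^D)$ (again by Theorem \ref{tB.3}), and that $\exp(\tr(T_k(z)))\neq 0$ — which the paper leaves implicit.
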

%%%%%%%%%%%%%%%%%%%%%%%%%%%%%%%%%%%%%
\begin{proof}
By the Birman--Schwinger principle, as discussed in Theorem \ref{tB.3}, for
any $k\in\bbN$ such that $k\geq p$ and
$z\in\bbC\big\backslash \big(\si\big(H_{\Om}^D\big)\cup \si\big(H_{0,\Om}^D\big) \cup
\si\big(H_{0,\Om}^N\big)\big)$, one has
\begin{equation}
z\in\si\big(H_{\Om}^N\big) \, \text{ if and only if } \,
\det{}_k\Big(I_{\Om}+ \ol{u\big(H_{0,\Om}^N -zI_{\Om}\big)^{-1}v}\,\Big)=0.
\end{equation}
Thus, \eqref{4.49} follows from \eqref{4.30}. In the same manner, \eqref{4.50}
follows from \eqref{4.33}.
\end{proof}
%%%%%%%%%%%%%%%%%%%%%%%%%%%%%%%%%%%%%

We conclude with another application to eigenvalue counting functions in the case where $H^D_{\Omega}$ and $H^N_{\Omega}$ are self-adjoint and have purely discrete spectra (i.e., empty essential spectra). To set the stage we introduce the following assumptions:

%%%%%%%%%%%%%%%%%%%%%%%%%%%%%%%%%%%%%
\begin{hypothesis}  \lb{h4.7} 
In addition to assuming Hypothesis \ref{h2.6} suppose that $V$ is real-valued and that $H^D_{\Omega}$ and $H^N_{\Omega}$ have purely discrete spectra.
\end{hypothesis}
%%%%%%%%%%%%%%%%%%%%%%%%%%%%%%%%%%%%%

%%%%%%%%%%%%%%%%%%%%%%%%%%%%%%%%%%%%%
\begin{remark}  \lb{r4.8} ${}$ \\
$(i)$ Real-valuedness of $V$ implies self-adjointness of $H^D_{\Omega}$ and 
$H^N_{\Omega}$ as noted in \eqref{B.11}. \\
$(ii)$ Since $\partial\Omega$ is assumed to be compact, purely discrete spectra of 
$H^D_{0,\Omega}$ and $H^N_{0,\Omega}$, that is, compactness of their resolvents 
(cf.\ \cite[Sect.\ XIII.14]{RS78}), is equivalent to $\Omega$ being bounded. Indeed, if 
$\Omega$ had an unbounded component, then one can construct Weyl sequences which would yield nonempty essential spectra of $H^D_{0,\Omega}$ and 
$H^N_{0,\Omega}$. On the other hand, $H^D_{0,\Omega}$ has empty essential spectrum for any bounded open set $\Omega \subset \bbR^n$ as discussed in the Corollary to \cite[Theorem XIII.73]{RS78}. Similarly, $H^N_{0,\Omega}$ has empty essential spectrum for any bounded open set $\Omega$ satisfying the segment property 
as discussed in Corollary 1 to \cite[Theorem XIII.75]{RS78}. Since any bounded Lipschitz domain satisfies the segment property (cf.\ \cite[Sect,\ 1.2.2]{Gr85}), any bounded domain $\Omega$ satisfying Hypothesis \ref{h2.1} yields a purely discrete spectrum of $H^N_{0,\Omega}$. \\
$(iii)$ We recall that $V$ is relatively form compact with respect to $H^D_{0,\Omega}$ and $H^N_{0,\Omega}$, that is, 
\begin{equation}
v\big(H^D_{0,\Om} - z I_{\Om}\big)^{-1/2}, \, v\big(H^N_{0,\Om} - z I_{\Om}\big)^{-1/2} 
\in \cB_{\infty}\big(L^2(\Om; d^n x)\big)
\end{equation}
for all $z$ in the resolvent sets of $H^D_{0,\Omega}$, respectively, $H^N_{0,\Omega}$ 
(in fact, much more is true as recorded in \eqref{2.31}  and \eqref{2.32} since 
$\cB_\infty$ can be replaced by $\cB_{2p}$). By \eqref{3.47a} and \eqref{3.48a} this yields that the difference of the resolvents of $H^D_{\Omega}$ and $H^N_{\Omega}$ is compact (in fact, it even lies in $\cB_{p}\big(L^2(\Om; d^n x)\big)$). By a variant of Weyl's theorem (cf., e.g., \cite[Theorem XIII.14]{RS78}), one concludes that $H^D_{\Omega}$ and $H^N_{\Omega}$ have empty essential spectrum if and only if $H^D_{0,\Omega}$ and $H^N_{0,\Omega}$ have (cf.\ \cite[Problem 39, p.\ 369]{RS78}). Thus, by part $(ii)$ of this remark, the assumption that $H^D_{\Omega}$ and $H^N_{\Omega}$ have purely discrete spectra in Hypothesis \ref{h4.7} can equivalently be replaced by the assumption that $\Omega$ is bounded (still assuming Hypothesis \ref{h2.6} and that $V$ is real-valued). 
\end{remark}
%%%%%%%%%%%%%%%%%%%%%%%%%%%%%%%%%%%%%

\medskip

Assuming Hypothesis \ref{h4.7}, $k\in\bbN$, $k\geq p$, we introduce (cf.\ also 
\cite{Ya07})
\begin{equation}
\xi_k(\lambda)= \begin{cases}  \pi^{-1} \Im\Big(\ln\Big(\det{}_k\Big(I_{\Om}
+\ol{u (H_{0,\Om} - \lambda I_{\Om})^{-1}v}\,\Big)\Big)\Big), 
& \lambda \in (e_0,\infty) \backslash (\sigma(H_{\Omega})\cup 
\sigma(H_{0,\Omega})), \\
0, & \lambda < e_0,
\end{cases}    \lb{4.57}  
\end{equation}
where 
\begin{equation}
e_0 = \inf(\sigma(H_{\Omega}), \sigma(H_{0,\Omega})), 
\end{equation}
and $H_{\Omega}$ and $H_{0,\Omega}$ temporarily abbreviate $H^D_{\Omega}$ and $H^D_{0, \Omega}$ in the case of Dirichlet boundary conditions on 
$\partial\Omega$ and $H^N_{\Omega}$ and $H^N_{0, \Omega}$ in the case of Neumann boundary conditions on $\partial\Omega$.  Moreover, we subsequently agree to write $\xi^D_k(\cdot)$ and $\xi^N_k(\cdot)$ for $\xi(\cdot)$ in the case of Dirichlet and  Neumann boundary conditions in $H_{\Omega}, H_{0, \Omega}$.

The branch of the logarithm in \eqref{4.57} has been fixed by putting $\xi_k(\lambda)=0$ for $\lambda$ in a neighborhood of $-\infty$. This is possible since
\begin{equation}
\lim_{\lambda\downarrow -\infty} \det{}_k\Big(I_{\Om}
+\ol{u (H_{0,\Om}- \lambda I_{\Om})^{-1}v}\,\Big) = 1.   \lb{4.58}
\end{equation}
Equation \eqref{4.58} in turn follows from Lemma \ref{l2.3} since 
\begin{equation}
\lim_{\lambda\downarrow -\infty} 
\Big\|\ol{u(H_{0,\Omega} - \lambda I_{\Omega})^{-1} 
v}\Big\|_{\cB_k(L^2(\Omega; d^n x))} = 0
\end{equation}
by applying the dominated convergence theorem to 
$\|(\abs{\cdot}^2-\lambda)^{-1/2}\|_{L^{2p}(\bbR^n;d^nx)}^2$ as $\lambda\downarrow-\infty$ in \eqref{2.8} (replacing $p$ by $2p$, $q$ by $1/2$, $f$ by $u$ and $v$, etc.). 
Since $H_{0,\Omega}$ is self-adjoint in $L^2(\Omega; d^n x)$ with purely discrete spectrum, for any $\lambda_0 \in\bbR$, we obtain the norm convergent expansion
\begin{equation}
(H_{0,\Omega} - z I_{\Omega})^{-1} \underset{z\to\lambda_0}{=} 
P_{0,\Omega, \lambda_0} (\lambda_0 -z)^{-1} 
+ \sum_{k=0}^{\infty} (-1)^k S_{0,\Omega, \lambda_0}^{k+1} (\lambda_0 -z)^{k}, \lb{4.63}
\end{equation}
where $P_{0,\Omega,\lambda_0}$ denotes the Riesz projection associated with 
$H_{0,\Omega}$ and the point $\lambda_0$, and $S_{0,\Omega,\lambda_0}$ is 
given by
\begin{equation}
S_{0,\Omega, \lambda_0} = \lim_{z\to\lambda_0} (H_{0,\Omega} - z I_{\Omega})^{-1} 
(I_{\Omega} - P_{0,\Omega, \lambda_0}),
\end{equation}
with the limit taken in the topology of $\cB(L^2(\Omega;d^n x))$. 
Hence, $S_{0,\Omega, \lambda_0} P_{0,\Omega, \lambda_0} = 
P_{0,\Omega,\lambda_0} S_{0,\Omega,\lambda_0} = 0$. If, in fact, $\lambda_0$ is a (necessarily discrete) eigenvalue of $H_{0,\Omega}$, then $P_{0,\Omega,\lambda_0}$ is the projection onto the corresponding eigenspace of $H_{0,\Omega}$ and the dimension of its range equals the multiplicity of the eigenvalue $\lambda_0$, denoted by
\begin{equation}
n_{0,\lambda_0} = \dim(\ran(P_{0,\Omega, \lambda_0})).
\end{equation}
We recall that all eigenvalues of $H_{0,\Omega}$ are semisimple, that is, their geometric and algebraic multiplicities coincide, since $H_{0,\Omega}$ is assumed to be self-adjoint. If $\lambda_0$ is not in the spectrum of $H_{0,\Omega}$ then, of course, $P_{0,\Omega,\lambda_0} =0$ and $n_{0,\lambda_0} =0$. In exactly, the same manner, and in obvious notation, one then also obtains
\begin{equation}
(H_{\Omega} - z I_{\Omega})^{-1} \underset{z\to\lambda_0}{=} 
P_{\Omega, \lambda_0} (\lambda_0 -z)^{-1} 
+ \sum_{k=0}^{\infty} (-1)^k S_{\Omega, \lambda_0}^{k+1} (\lambda_0 -z)^{k}  \lb{4.66}
\end{equation}
and 
\begin{equation}
n_{\lambda_0} = \dim(\ran(P_{\Omega, \lambda_0})).
\end{equation}

In the following we denote half-sided limits by
\begin{equation}
f(x_+)= \lim_{\varepsilon\downarrow 0} f(x+\varepsilon), \quad 
f(x_-) = \lim_{\varepsilon\uparrow 0} f(x-\varepsilon), \quad x\in\bbR.
\end{equation}
Moreover, we denote by $N_{H_{\Omega}}(\lambda)$ (respectively, 
$N_{H_{0,\Omega}}(\lambda)$), $\lambda\in\bbR$, the right-continuous function on $\bbR$ which counts the number of eigenvalues of $H_{\Omega}$ (respectively, $H_{0,\Omega}$) less than or equal to $\lambda$, counting multiplicities. 

%%%%%%%%%%%%%%%%%%%%%%%%%%%%%%%%%%%%%
\begin{lemma} \lb{l4.8}  
Assume Hypothesis \ref{h4.7} and let $k\in\bbN$, $k\geq p$. Then $\xi_k$ equals a fixed integer on any open interval in 
$\bbR\backslash(\sigma(H_{\Omega})\cup\sigma(H_{0,\Omega}))$. Moreover, for any 
$\lambda \in \bbR$, 
\begin{equation}
\xi_k(\lambda_+) - \xi_k(\lambda_-) = - (n_{\lambda} - n_{0,\lambda}),   \lb{4.69}
\end{equation}
and hence $\xi_k$ is piecewise integer-valued on $\bbR$ and normalized to vanish on 
$(-\infty, e_0)$ such that
\begin{equation}
\xi_k(\lambda) = -[N_{H_{\Omega}}(\lambda) - N_{H_{0,\Omega}}(\lambda)], \quad 
\lambda \in \bbR \backslash (\sigma(H_{\Omega})\cup\sigma(H_{0,\Omega})).   \lb{4.70}
\end{equation}
\end{lemma}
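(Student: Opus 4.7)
The plan is to introduce the shorthand
\begin{equation*}
D_k(z) = \det{}_k\Big(I_\Omega + \ol{u(H_{0,\Omega}-zI_\Omega)^{-1}v}\,\Big),
\end{equation*}
so that \eqref{4.57} reads $\xi_k(\lambda) = \pi^{-1}\Im\big(\ln D_k(\lambda+i0)\big)$, the branch of $\ln$ being fixed by $D_k(z)\to 1$ as $\Re(z)\to -\infty$ in the closed upper half-plane (a consequence of \eqref{4.58}). The strategy is to show that $D_k$ extends to a meromorphic function on $\bbC$ whose order at each $\lambda_0\in\bbR$ is
\begin{equation*}
\operatorname{ord}_{z=\lambda_0} D_k(z) = n_{\lambda_0} - n_{0,\lambda_0}
\end{equation*}
(positive for zeros, negative for poles), after which \eqref{4.69} and \eqref{4.70} will fall out of an argument-principle calculation; piecewise integer-valuedness on the joint resolvent set comes from the symmetry enforced by real-valuedness of $V$.

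For meromorphy and symmetry, Hypothesis \ref{h4.7} forces $H_{0,\Omega}$ to have purely discrete spectrum, so the Riesz expansion \eqref{4.63} exhibits $z\mapsto \ol{u(H_{0,\Omega}-zI_\Omega)^{-1}v}$ as a $\cB_k(\LOm)$-valued meromorphic function whose only singularities are simple poles at the eigenvalues $\lambda_0$ of $H_{0,\Omega}$ with finite-rank residues $-uP_{0,\Omega,\lambda_0}v$; applying $\det{}_k$ preserves meromorphy. Since $V$ is real, the multiplications $u$ and $v$ and the real differential operator $H_{0,\Omega}$ all commute with the natural complex conjugation on $\LOm$, and consequently $\ol{D_k(z)} = D_k(\bar z)$; in particular $D_k$ is real-valued on $\bbR\backslash\sigma(H_{0,\Omega})$. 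The multiplicity identity is the main technical point, and I would obtain it by a Schur/Grushin reduction: decompose $\LOm = \ran(P_{0,\Omega,\lambda_0})\oplus\ran(I_\Omega - P_{0,\Omega,\lambda_0})$, strip off the singular finite-rank piece $-uP_{0,\Omega,\lambda_0}v/(z-\lambda_0)$ in \eqref{4.63}, and factor $D_k(z)$ near $\lambda_0$ as $(z-\lambda_0)^{-n_{0,\lambda_0}}$ times a finite-dimensional determinant of the associated Schur complement, modulated by an analytic, nonvanishing exponential correction that encodes the regularizing subtractions distinguishing $\det{}_k$ from $\det$. The Birman--Schwinger principle (Theorem \ref{tB.3}) together with \eqref{4.66} then identifies the order of vanishing of that finite-dimensional determinant at $\lambda_0$ with $\dim(\ran(P_{\Omega,\lambda_0})) = n_{\lambda_0}$, yielding the claimed order formula.

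The jump formula \eqref{4.69} is now immediate from the argument principle: as $\lambda$ crosses $\lambda_0$ from left to right along $\bbR + i0$, the argument of $D_k(\lambda+i0)\sim c(\lambda-\lambda_0)^{n_{\lambda_0}-n_{0,\lambda_0}}$ changes by $-\pi(n_{\lambda_0}-n_{0,\lambda_0})$, so $\xi_k$ jumps by $-(n_{\lambda_0}-n_{0,\lambda_0})$. On every open gap in $\bbR\backslash(\sigma(H_\Omega)\cup\sigma(H_{0,\Omega}))$, $D_k$ is real-analytic and nonvanishing, whence $\Im\ln D_k(\lambda+i0)$ is locally constant and confined to $\pi\bbZ$, so $\xi_k$ is constant and integer-valued on each such gap. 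Summing the jumps \eqref{4.69} starting from $\xi_k\equiv 0$ on $(-\infty,e_0)$ (justified by \eqref{4.58}) then produces \eqref{4.70}. The hardest step will be the clean execution of the Schur-complement factorization, ensuring that the exponential $\det{}_k$-correction is demonstrably analytic and nonvanishing at $\lambda_0$, so that the eigenvalue multiplicities $n_{\lambda_0}$ and $n_{0,\lambda_0}$ are cleanly extracted from the local structure of $D_k$.
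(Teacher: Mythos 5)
There is a genuine gap. For $k\geq 2$ the function $D_k(z)=\det{}_k\bigl(I_\Omega+\overline{u(H_{0,\Omega}-zI_\Omega)^{-1}v}\,\bigr)$ is \emph{not} meromorphic near eigenvalues $\lambda_0$ of $H_{0,\Omega}$, and the exponential factor separating $\det{}_k$ from $\det$ is \emph{not} analytic and nonvanishing at $\lambda_0$. Near $\lambda_0$, the Birman--Schwinger kernel has a simple pole with finite-rank residue (cf.\ \eqref{4.63}), so (formally) the regularizing subtractions contribute $\sum_{j=1}^{k-1}\frac{(-1)^j}{j}\tr(A(z)^j)$ to $\ln D_k(z)$, and $\tr(A(z)^j)$ has a pole of order $j$ at $\lambda_0$. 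Thus $\ln D_k$ has a genuine pole of order $k-1$ at $\lambda_0$ (beyond the logarithmic singularity of $\ln D_1$), and $D_k$ has an essential singularity there. Asserting that the correction is ``analytic, nonvanishing'' sweeps under the rug the entire content of \eqref{4.74}--\eqref{4.75}: the negative Laurent coefficients $c_{-k},\dots,c_{-2}$ of $-\tfrac{d}{dz}\ln D_k$ do not vanish; one only knows that they are real and that the \emph{simple-pole} coefficient $c_{-1}$ vanishes. The latter is precisely what makes the jump formula come out right, and it is not obvious.

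The paper establishes $c_{-1}=0$ by first passing to finite-rank truncations $V_m=v P_m u$, for which the correction terms can be written as exact $\tfrac{d}{dz}$-derivatives of single-valued meromorphic functions $\tr\bigl(\bigl[\overline{P_m u R_0(z) v P_m}\bigr]^\ell\bigr)$ (see \eqref{4.86}); exact derivatives have zero residue, so only the trace of the resolvent difference contributes a simple pole, with residue $n_{\lambda_0}-n_{0,\lambda_0}$. A $\cB_p$-norm approximation \eqref{4.78}--\eqref{4.79}, \eqref{4.83} then transfers this to general $V$. If you want to salvage your Schur/Grushin route, you would still need to show (a) that the residue of $-D_k'/D_k$ at $\lambda_0$ is exactly $n_{\lambda_0}-n_{0,\lambda_0}$ and (b) that the remaining Laurent coefficients are real, so that the essential singularity drops out of $\Im\ln D_k(\lambda+i0)$ in the limit; your factorization as written proves neither. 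The real-valuedness step and the final integration of jumps from $\xi_k\equiv 0$ on $(-\infty,e_0)$ are fine and match the paper in spirit.
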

%%%%%%%%%%%%%%%%%%%%%%%%%%%%%%%%%%%%%
\begin{proof} 
Introducing the unitary operator $S$ in $L^2(\Omega; d^n x)$ of multiplication by the function $\sgn(V)$,
\begin{equation}
(Sf)(x) = \sgn(V(x)) f(x), \quad f \in L^2(\Omega; d^n x)   \lb{4.71}
\end{equation}
such that $Su = uS = v$, $Sv = vS = u$, $S^2 = I_{\Omega}$, one computes for 
$\lambda \in \bbR\backslash \sigma(H_{0,\Omega})$,
\begin{align}
\ol{\det{}_k\Big(I_{\Om}+\ol{u(H_{0,\Om} - \lambda I_{\Om})^{-1}v}\,\Big)} &= 
\det{}_k\Big(I_{\Om} +\ol{v(H_{0,\Om} - \lambda I_{\Om})^{-1}u}\,\Big)  \no \\
&= \det{}_k\Big(I_{\Om}+\ol{Su(H_{0,\Om} - \lambda I_{\Om})^{-1}vS}\,\Big)  \no \\
&= \det{}_k\Big(I_{\Om}+\ol{u(H_{0,\Om} - \lambda I_{\Om})^{-1}v}\,\Big),  
\lb{4.72}
\end{align} 
that is, $\det{}_k\Big(I_{\Om}+\ol{u(H_{0,\Om} - \lambda I_{\Om})^{-1}v}\,\Big)$ is real-valued for $\lambda \in \bbR\backslash \sigma(H_{0,\Omega})$. (Here the bars either denote complex conjugation, or the operator closure, depending on the context in which they are used.) Together with the 
Birman--Schwinger principle as expressed in Theorem \ref{tB.3}, this proves that $\xi_k$ equals a fixed integer on any open interval in 
$\bbR\backslash(\sigma(H_{\Omega})\cup\sigma(H_{0,\Omega}))$. 

Next, we note that  for 
$z\in \bbC\backslash(\sigma(H_{\Omega})\cup\sigma(H_{0,\Omega}))$, 
\begin{align}
\begin{split}
& - \f{d}{dz} \ln \Big(\det{}_k\Big(I_{\Om}
+\ol{u\big(H_{0,\Om}- z I_{\Om}\big)^{-1}v}\,\Big)\Big) = 
\tr\bigg((H_{\Omega} - z I_{\Omega})^{-1} - (H_{0,\Omega} - z I_{\Omega})^{-1}  \\
& \hspace*{1.9cm} 
- \sum_{\ell=1}^{k-1} (-1)^{\ell} \ol{(H_{0,\Omega} - z I_{\Omega})^{-1}v}  
\Big[\ol{u(H_{0,\Omega} - z I_{\Omega})^{-1}v} \Big]^{\ell-1}
u (H_{0,\Omega} - z I_{\Omega})^{-1}\bigg),   \lb{4.73}
\end{split}
\end{align}
which represents just a slight extension of the result recorded in \cite{Ya07}. Insertion of 
\eqref{4.63} and \eqref{4.66} into \eqref{4.73} then yields that for any 
$\lambda_0\in\bbR$, 
\begin{align}
- \f{d}{dz} \ln \Big(\det{}_k\Big(I_{\Om}
+\ol{u(H_{0,\Om}- z I_{\Om})^{-1}v}\,\Big)\Big) & \underset{z\to\lambda_0}{=}
\tr(P_{\Omega, \lambda_0} - P_{0,\Omega, \lambda_0}) (\lambda_0 -z)^{-1} +
\sum_{\ell=-k}^{\infty} c_{\ell} (\lambda_0 -z)^{\ell}   \no \\
& \underset{z\to\lambda_0}{=}
[n_{\lambda_0} - n_{0,\lambda_0}] (\lambda_0 -z)^{-1} +
\sum_{\ell=-k}^{\infty} c_{\ell} (\lambda_0 -z)^{\ell},    \lb{4.74} 
\end{align}
where 
\begin{equation}
c_{\ell} \in \bbR, \;\; \ell \in\bbZ, \, \ell \geq k, \, \text{ and } \,  c_{-1} =0.   \lb{4.75}
\end{equation}
That $c_{\ell} \in \bbR$ is clear from the real-valuedness of $V$ and the self-adjointness of $H_{\Omega}$ and $H_{0,\Omega}$ by expanding the $(\ell -1)$th power of  
$\ol{u(H_{0,\Omega} - z I_{\Omega})^{-1}v}$ in \eqref{4.73}. To demonstrate that 
$c_{-1}$ actually vanishes, that is, that the term proportional to $(\lambda_0 -z)^{-1}$ cancels in the sum $\sum_{\ell=-k}^{\infty} c_{\ell} (\lambda_0 -z)^{\ell}$ in \eqref{4.74}, we temporarily introduce $u_m = P_m u$, $v_m = v P_m$, where $\{P_m\}_{m\in\bbN}$ is a family of orthogonal projections in $L^2(\Omega; d^n x)$ satisfying
\begin{equation}
P_m^2=P_m=P_m^*, \quad \dim(\ran(P_m))=m, \quad \ran(P_m) \subset \dom(v), 
\; \; m\in\bbN, \quad \slim_{m\uparrow\infty} P_m = I_{\Omega},
\end{equation}
where $\slim$ denotes the limit in the strong operator topology. 
(E.g., it suffices to choose $P_m$ as appropriate spectral projections associated with 
$H_{0,\Omega}$.) In addition, we introduce $V_m = v_m u_m$ and the operator $H_{\Omega,m}$ in $L^2(\Omega; d^n x)$ by replacing $V$ by  $V_m$ in $H_{\Omega}$. Since 
\begin{equation}
V_m= (vP_m) P_m (u P_m)^*, 
\end{equation}
one obtains that $V_m$ is a trace class (in fact, finite rank) operator, that is, 
\begin{equation}
V_m \in \cB_1\big(L^2(\Om; d^n x)\big), \quad m\in\bbN.   \lb{4.77}
\end{equation}
Moreover, since by \eqref{2.31} and \eqref{2.32}, 
\begin{equation}
u(H_{0,\Omega} - z I_{\Omega})^{-1/2}, \ol{(H_{0,\Omega} - z I_{\Omega})^{-1/2} v} \in 
\cB_{2p}\big(L^2(\Omega; d^n x)\big), \quad  z\in \bbC\backslash\sigma(H_{0,\Omega}), 
\end{equation} 
one concludes that 
$\ol{P_m u (H_{0,\Omega} - z I_{\Omega})^{-1} v P_m} = 
P_m \ol{u (H_{0,\Omega} - z I_{\Omega})^{-1} v} P_m$, $m\in\bbN$, satisfies
\begin{align}
 \lim_{m\uparrow\infty} \big\|\ol{P_m u (H_{0,\Omega} - z I_{\Omega})^{-1} v P_m} - 
u (H_{0,\Omega} - z I_{\Omega})^{-1} v \big\|_{\cB_{p}(L^2(\Omega; d^n x))} &= 0, 
\quad z\in \bbC\backslash\sigma(H_{0,\Omega}),   \lb{4.78} \\
 \lim_{m\uparrow\infty} \big\|\ol{P_m u (H_{0,\Omega} - z I_{\Omega})^{-2} v P_m} - 
u (H_{0,\Omega} - z I_{\Omega})^{-2} v \big\|_{\cB_{p}(L^2(\Omega; d^n x))} &= 0, 
\quad  z\in \bbC\backslash\sigma(H_{0,\Omega}).   \lb{4.79}
\end{align} 
Applying the formula (cf.\ \cite[p.\ 44]{Ya92})
\begin{equation}
\f{d}{dz} \ln({\det}_k (I_{\cH} - A(z)) = - \tr\big((I_{\cH} - A(z))^{-1} A(z)^{k-1} A'(z) \big), \quad  z\in\cD, 
\end{equation}
where $A(\cdot)$ is analytic in some open domain $\cD\subseteq\bbC$ with respect to 
the $\cB_k(\cH)$-norm, $\cH$ a separable complex Hilbert space, one obtains for 
$z\in \bbC\backslash(\sigma(H_{\Omega})\cup\sigma(H_{0,\Omega}))$, 
\begin{align}
& - \f{d}{dz} \ln \Big({\det}_k \Big(I_{\Omega} 
+ \ol{u (H_{0,\Omega} - z I_{\Omega})^{-1} v} \, \Big)\Big)    \no \\
& \quad = (-1)^k \tr\Big(\Big[I_{\Omega} 
+ \ol{u (H_{0,\Omega} - z I_{\Omega})^{-1} 
v} \, \Big]^{-1} \Big[\,\ol{u (H_{0,\Omega} - z I_{\Omega})^{-1} v} \, \Big]^{k-1} 
\ol{u (H_{0,\Omega} - z I_{\Omega})^{-2} v} \, \Big),   \lb{4.81}  \\
& - \f{d}{dz} \ln\Big({\det}_k \Big(I_{\Omega} 
+ \ol{P_m u (H_{0,\Omega} - z I_{\Omega})^{-1} v P_m} \, \Big)\Big)  \no \\
& \quad = (-1)^k \tr \Big(\Big[I_{\Omega} + \ol{P_m u (H_{0,\Omega} 
- z I_{\Omega})^{-1} v P_m} \, \Big]^{-1} 
\Big[\ol{P_m u (H_{0,\Omega} - z I_{\Omega})^{-1} v P_m}\,\Big]^{k-1}  \lb{4.82} \\
& \hspace*{8cm} \times 
\ol{P_m u (H_{0,\Omega} - z I_{\Omega})^{-2} v P_m } \, \Big), 
\quad m\in\bbN.    \no 
\end{align} 
Combining equations \eqref{4.78}, \eqref{4.79} and \eqref{4.81}, \eqref{4.82} then yields 
\begin{align}
& \lim_{m\uparrow\infty} \f{d}{dz} \ln\Big({\det}_k \Big(I_{\Omega} + \ol{P_m u (H_{0,\Omega} - z I_{\Omega})^{-1} v P_m} \Big)\Big) = 
\f{d}{dz} \ln\Big({\det}_k \Big(I_{\Omega} + \ol{u (H_{0,\Omega} - z I_{\Omega})^{-1} 
v} \Big)\Big),   \no \\ 
& \hspace*{9.5cm}  z\in\bbC\backslash(\sigma(H_{\Omega})\cup\sigma(H_{0,\Omega})). 
 \lb{4.83} 
\end{align}
Because of \eqref{4.83}, to prove that $c_{-1} =0$ in \eqref{4.74} (as claimed in 
\eqref{4.75}), it suffices to replace $V$ in \eqref{4.74} by $V_m$ and prove that 
$c_{m,-1} =0$ for all $m\in\bbN$ in the following equation analogous to \eqref{4.74}, 
\begin{align}
\begin{split}
& - \f{d}{dz} \ln \Big(\det{}_k\Big(I_{\Om}
+ P_m \ol{u (H_{0,\Om}- z I_{\Om})^{-1}v} P_m \Big)\Big)   \\
& \quad  \underset{z\to\lambda_0}{=}
\tr(P_{\Omega, m, \lambda_0} - P_{0,\Omega, \lambda_0}) (\lambda_0 -z)^{-1} +
\sum_{\ell=-k}^{\infty} c_{m,\ell} (\lambda_0 -z)^{\ell},  \quad m\in\bbN,    \lb{4.84} 
\end{split}
\end{align}
where 
\begin{equation}
c_{m,\ell} \in \bbR, \;\; \ell \in\bbZ, \, \ell \geq k, \;\; m\in\bbN,    \lb{4.85}
\end{equation}
and $P_{\Omega, m, \lambda_0}$ denotes the corresponding Riesz projection associated with $H_{\Omega,m}$ (obtained by replacing $V$ by  $V_m$ in 
$H_{\Omega}$) and the point $\lambda_0$.

Applying the analog of formula \eqref{4.73} to $H_{\Omega,m}$ (cf.\ again \cite{Ya07}), and noting that $P_m$ has rank $m\in\bbN$, one concludes that for 
$z\in \bbC\backslash(\sigma(H_{\Omega})\cup\sigma(H_{0,\Omega}))$, 
\begin{align}
& - \f{d}{dz} \ln \Big(\det{}_k\Big(I_{\Om}
+ P_m\ol{u (H_{0,\Om}- z I_{\Om})^{-1}v} P_m\,\Big)\Big) 
= - \f{d}{dz} \ln \Big(\det{}_k\Big(I_{\Om}
+ \ol{P_m u (H_{0,\Om}- z I_{\Om})^{-1}v P_m}\,\Big)\Big)  \no \\
& \quad = 
\tr\bigg((H_{\Omega,m} - z I_{\Omega})^{-1} - (H_{0,\Omega} - z I_{\Omega})^{-1}  \no \\
& \hspace*{1.4cm} 
- \sum_{\ell=1}^{k-1} (-1)^{\ell} \ol{(H_{0,\Omega} - z I_{\Omega})^{-1}v P_m}  
\Big[\ol{P_m u(H_{0,\Omega} - z I_{\Omega})^{-1}v P_m} \Big]^{\ell-1}
P_m u (H_{0,\Omega} - z I_{\Omega})^{-1}\bigg)  \no \\
& \quad 
= \tr\big((H_{\Omega,m} - z I_{\Omega})^{-1} - (H_{0,\Omega} - z I_{\Omega})^{-1}\big) 
- \sum_{\ell=1}^{k-1} \f{(-1)^{\ell}}{\ell} \f{d}{dz} 
\tr\bigg(\Big[\ol{P_m u (H_{0,\Omega} - z I_{\Omega})^{-1}v P_m}\Big]^{\ell}\bigg)  
\lb{4.86}  \\ 
& \quad 
= \tr\big((H_{\Omega,m} - z I_{\Omega})^{-1} - (H_{0,\Omega} - z I_{\Omega})^{-1}\big) 
\no \\
& \qquad 
+ \sum_{\ell=1}^{k-1} (-1)^{\ell} \tr\bigg(  
\Big[\ol{P_m u(H_{0,\Omega} - z I_{\Omega})^{-1}v P_m} \Big]^{\ell-1} \, 
\ol{P_m u (H_{0,\Omega} - z I_{\Omega})^{-2} v P_m} \bigg), \quad m\in\bbN.   \no
\end{align}
Here we have used the fact that by \eqref{4.77},  
\begin{equation}
- \f{d}{dz} \ln \Big(\det{}\Big(I_{\Om}
+ \ol{P_m u (H_{0,\Om}- z I_{\Om})^{-1}v P_m}\,\Big)\Big)  = 
\tr\big((H_{\Omega,m} - z I_{\Omega})^{-1} - (H_{0,\Omega} - z I_{\Omega})^{-1}\big), 
\end{equation}
for $z\in \bbC\backslash(\sigma(H_{\Omega})\cup\sigma(H_{0,\Omega}))$, and that 
(cf.\ \cite[Theorem 9.2]{Si05})
\begin{align}
\begin{split}
\f{d}{dz} \ln ({\det}_k(I_{\cH} - B(z))) & = \f{d}{dz} \ln ({\det} (I_{\cH} - B(z))) + 
\sum_{\ell=1}^{k-1} \f{1}{\ell} \f{d}{dz} \tr \big(B(z)^\ell\big)  \\
& = \f{d}{dz} \ln ({\det} (I_{\cH} - B(z))) + \sum_{\ell=1}^{k-1} \tr \big(B(z)^{\ell-1} B'(z)\big),
\quad z\in\cD, 
\end{split}
\end{align} 
where $B(\cdot)$ is analytic in some open domain $\cD \subseteq \bbC$ with respect to the $\cB_1(\cH)$-norm (with $\cH$ a separable complex Hilbert space). 

The presence of the $d/dz$-term under the sum in \eqref{4.86} proves that the only 
$(\lambda_0 - z)^{-1}$-term in \eqref{4.84}, respectively, \eqref{4.86}, as 
$z\to\lambda_0$, must originate from the trace of the resolvent difference 
\begin{equation}
\tr\big((H_{\Omega,m} - z I_{\Omega})^{-1} - (H_{0,\Omega} - z I_{\Omega})^{-1}\big) 
\underset{z\to\lambda_0}{=} \tr(P_{\Omega, m, \lambda_0} 
- P_{0,\Omega, \lambda_0}) (\lambda_0 -z)^{-1} + \Oh(1), \quad m\in\bbN.
\end{equation}
Thus we have proved that 
\begin{equation}
c_{m,-1} = 0, \quad m\in\bbN,
\end{equation}
in \eqref{4.84}. By \eqref{4.83} this finally proves 
\begin{equation}
c_{-1} = 0  
\end{equation}
in \eqref{4.74}. Equations \eqref{4.74} and \eqref{4.75} then prove \eqref{4.69}. Together with the paragraph following \eqref{4.72}, this also proves \eqref{4.70}. 
\end{proof}
%%%%%%%%%%%%%%%%%%%%%%%%%%%%%%%%%%%%%

Given Lemma \ref{l4.8}, Theorem \ref{t4.2} yields the following application to differences of Dirichlet and Neumann eigenvalue counting functions:

%%%%%%%%%%%%%%%%%%%%%%%%%%%%%%%%%%%%%
\begin{theorem} \lb{t4.9}  
Assume Hypothesis \ref{h4.7} and let $k\in\bbN$, $k\geq p$. Then, for all 
$\lambda\in\bbR\backslash \big(\si\big(H^D_{\Omega}\big) \cup 
\si\big(H^D_{0,\Omega}\big) \cup \si\big(H^N_{0,\Omega}\big)\big)$, 
\begin{align}
& \xi_k^N(\lambda) - \xi_k^D(\lambda) = [N_{H^D_{\Om}} (\lambda) - 
N_{H^D_{0, \Om}} (\lambda)] 
- [N_{H^N_{\Om}}  (\lambda) - N_{H^N_{0, \Om}}  (\lambda)]    \no \\
& \quad = \pi^{-1} \Im\Big(\ln\Big({\det}_k \Big(I_{\partial\Omega} - 
\ol{\gamma_N \big(H^D_{\Omega} - \lambda I_{\Omega}\big)^{-1} V 
\big[\gamma_D \big(H^N_{0,\Omega} - \lambda I_{\Omega}\big)^{-1}\big]^*}\,\Big)\Big)\Big) 
+ \pi^{-1} \Im(\tr(T_k(\lambda)))   \no \\
& \quad = \pi^{-1} \Im\big(\ln\big({\det}_k \big(M^D_{\Omega}(\lambda) 
M^D_{0,\Omega}(\lambda)^{-1} \big)\big)\big) + \pi^{-1} \Im(\tr(T_k(\lambda)))
\end{align}
with $T_k$ defined in Theorem \ref{t4.1}.
\end{theorem}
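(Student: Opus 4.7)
The plan is to derive Theorem \ref{t4.9} as a direct consequence of Lemma \ref{l4.8} combined with Theorem \ref{t4.2}. First I would apply \eqref{4.70} in Lemma \ref{l4.8} separately to the Dirichlet and Neumann settings: taking $H_\Omega = H^D_\Omega$ and $H_{0,\Omega} = H^D_{0,\Omega}$ yields $\xi_k^D(\lambda) = -[N_{H^D_\Omega}(\lambda) - N_{H^D_{0,\Omega}}(\lambda)]$, while taking $H_\Omega = H^N_\Omega$ and $H_{0,\Omega} = H^N_{0,\Omega}$ gives the analogous identity for $\xi_k^N$. Subtracting these two identities immediately produces the first equality in Theorem \ref{t4.9}.

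For the second and third equalities, I would use the definition \eqref{4.57} of $\xi_k$, which converts the difference $\xi_k^N(\lambda) - \xi_k^D(\lambda)$ into $\pi^{-1}$ times the imaginary part of the logarithm of the ratio
$$
\frac{\det{}_k\!\Big(I_{\Om}+\ol{u\big(H_{0,\Om}^N-\lambda I_{\Om}\big)^{-1}v}\,\Big)}
{\det{}_k\!\Big(I_{\Om}+\ol{u\big(H_{0,\Om}^D-\lambda I_{\Om}\big)^{-1}v}\,\Big)}.
$$
Invoking identity \eqref{4.30} from Theorem \ref{t4.2} then rewrites this ratio as the product of the boundary determinant and the exponential factor $\exp(\tr(T_k(\lambda)))$. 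Applying the general logarithm identity $\Im(\ln(AB)) = \Im(\ln A) + \Im(\ln B)$ (with the branch of log consistently fixed, see below) and noting that $\Im(\ln(e^{\tr(T_k(\lambda))})) = \Im(\tr(T_k(\lambda)))$ yields the second equality. The third equality is obtained by the same manipulation but applying \eqref{4.31} in place of \eqref{4.30}.

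The main technical issue to handle rigorously is branch consistency of the complex logarithm. This is addressed by the canonical normalization used in Lemma \ref{l4.8}: one sets $\xi_k(\lambda) = 0$ for $\lambda < e_0$, which is justified by \eqref{4.58} forcing all relevant Fredholm determinants to tend to $1$ as $\lambda \downarrow -\infty$. The same normalization is available for the boundary determinant on the right-hand side of \eqref{4.30}, since by Theorem \ref{t4.1} and Lemma \ref{l2.3} the operator $\ol{\ga_N(H_{\Om}^D-\la I_{\Om})^{-1}V [\ga_D (H_{0,\Om}^N-\la I_{\Om})^{-1}]^*}$ tends to zero in $\cB_k$-norm as $\lambda \downarrow -\infty$, and $\tr(T_k(\lambda)) \to 0$ similarly. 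Propagating this branch choice along each connected component of $\bbR \backslash (\sigma(H^D_\Omega) \cup \sigma(H^D_{0,\Omega}) \cup \sigma(H^N_{0,\Omega}))$ via analytic continuation yields a well-defined logarithm throughout, and the additive relation $\Im(\ln(\cdot)) = \Im(\ln(\cdot)) + \Im(\ln(\cdot))$ holds as a genuine pointwise identity rather than merely modulo $2\pi$. With this branch bookkeeping in place, the remainder of the argument is purely algebraic, so the branch-consistency step is effectively the only obstacle.
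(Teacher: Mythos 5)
Your proposal is correct and follows essentially the same route as the paper, which simply cites \eqref{4.30}, \eqref{4.31}, \eqref{4.57}, and \eqref{4.70} and declares the theorem an immediate consequence. Your elaboration of the branch-consistency issue for the complex logarithm (normalization at $\lambda\downarrow-\infty$ propagated by continuity along each spectral gap, so that $\Im(\ln(AB))=\Im(\ln A)+\Im(\ln B)$ holds exactly rather than mod $2\pi$) is a reasonable and accurate unpacking of the step the paper leaves implicit, but it does not constitute a different approach.
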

%%%%%%%%%%%%%%%%%%%%%%%%%%%%%%%%%%%%%
\begin{proof} 
This is now an immediate consequence of \eqref{4.30}, \eqref{4.31},  \eqref{4.57}, 
and \eqref{4.70}.
\end{proof}
%%%%%%%%%%%%%%%%%%%%%%%%%%%%%%%%%%%%%

%%%%%%%%%%%%%%%%%%%%%%%%%%%%%%%%%%%%%%
%%%%%%%%%%%%%%% appendices %%%%%%%%%%%
\appendix
%%%%%%%%%%%% appendix A %%%%%%%%%%%%%%
\section{Properties of Dirichlet and Neumann Laplacians}
\lb{sA}
\renewcommand{\theequation}{A.\arabic{equation}}
\renewcommand{\thetheorem}{A.\arabic{theorem}}
\setcounter{theorem}{0} \setcounter{equation}{0}
%%%%%%%%%%%%%%%%%%%%%%%%%%%%%%%%%%%%%%
%%%%%%%%%%%%%%%%%%%%%%%%%%%%%%%%%%%%%%

The purpose of this appendix is to recall some basic operator domain
properties of Dirichlet and Neumann Laplacians on sets
$\Om\subset\bbR^n$, $n\in\bbN$, $n\geq 2$, satisfying Hypothesis \ref{h2.1}. We will show
that the methods developed in \cite{GLMZ05} in the context of
$C^{1,r}$-domains, $1/2<r<1$, in fact, apply to all domains $\Om$
permitted in Hypothesis \ref{h2.1}.

In this manuscript we use the following notation for the standard
Sobolev Hilbert spaces ($s\in\bbR$),
\begin{align}
H^{s}(\bbR^n)&=\left\{U\in \cS(\bbR^n)^* \,|\,
\norm{U}_{H^{s}(\bbR^n)}^2 = \int_{\bbR^n} d^n \xi \, \big|\hatt
U(\xi)\big|^2\big(1+\abs{\xi}^{2s}\big) <\infty \right\},
\\
H^{s}(\Om)&=\left\{u\in C_0^\infty(\Om)^* \,|\, u=U|_\Om \text{
for some } U\in H^{s}(\bbR^n) \right\},
\\
H_0^{s}(\Om)&=\text{the closure of } C_0^\infty(\Om) \text{ in the
norm of } H^{s}(\Om).
\end{align}
Here $C_0^\infty(\Om)^*$ denotes the usual set of distributions on
$\Omega\subseteq \bbR^n$, $\Omega$ open and nonempty,
$\cS(\bbR^n)^*$ is the space of tempered distributions on
$\bbR^n$, and $\hatt U$ denotes the Fourier transform of $U\in
\cS(\bbR^n)^*$. It is then immediate that
\begin{equation}\label{incl-xxx}
H^{s_1}(\Omega)\hookrightarrow H^{s_0}(\Omega) \, \text{ for } \,
-\infty<s_0\leq s_1<+\infty,
\end{equation}
continuously and densely.

Next, we recall the
definition of a $C^{1,r}$-domain $\Omega\subseteq\bbR^n$, $\Om$
open and nonempty, for convenience of the reader: Let ${\mathcal
N}$ be a space of real-valued functions in $\bbR^{n-1}$.  One
calls a bounded domain $\Omega\subset\bbR^n$ of class ${\mathcal
N}$ if there exists a finite open covering $\{{\mathcal
O}_j\}_{1\leq j\leq N}$ of the boundary $\partial\Omega$ of $\Om$
with the property that, for every $j\in\{1,...,N\}$, ${\mathcal
O}_j\cap\Omega$ coincides with the portion of ${\mathcal O}_j$
lying in the over-graph of a function $\varphi_j\in{\mathcal N}$
(considered in a new system of coordinates obtained from the
original one via a rigid motion). Two special cases are going to
play a particularly important role in the sequel. First, if
${\mathcal  N}$ is ${\rm Lip}\,(\bbR^{n-1})$, the space of
real-valued functions satisfying a (global) Lipschitz condition in
$\bbR^{n-1}$, we shall refer to $\Omega$ as being a Lipschitz
domain; cf.\ \cite[p.\ 189]{St70}, where such domains are called
``minimally smooth''. Second, corresponding to  the case when
${\mathcal N}$ is the subspace of ${\rm Lip}\,(\bbR^{n-1})$
consisting of functions whose first-order derivatives satisfy a
(global) H\"older condition of order $r\in(0,1)$, we shall say
that $\Omega$ is of class $C^{1,r}$. The classical theorem of
Rademacher of almost everywhere differentiability of Lipschitz
functions ensures that, for any  Lipschitz domain $\Omega$, the
surface measure $d^{n-1}\sigma$ is well-defined on  $\partial\Omega$ and
that there exists an outward  pointing normal vector $\nu$ at
almost every point of $\partial\Omega$. For a Lipschitz domain
$\Omega\subset\bbR^n$ it is known that
\begin{equation}\lb{dual-xxx}
\bigl(H^{s}(\Omega)\bigr)^*=H^{-s}(\Omega), \quad - 1/2 <s< 1/2.
\end{equation}
See \cite{Tr02} for this and other related properties.

Next, assume that $\Omega\subset\bbR^n$ is the domain lying above
the graph of a function $\varphi:\bbR^{n-1}\to\bbR$ of class
$C^{1,r}$. Then for $0\leq s<1+r$, the Sobolev space
$H^s(\partial\Omega)$ consists of functions $f\in
L^2(\partial\Omega;d^{n-1}\sigma)$ such that $f(x',\varphi(x'))$,
as a function of $x'\in\bbR^{n-1}$, belongs to $H^s(\bbR^{n-1})$.
This definition is easily adapted to the case when $\Omega$ is a 
domain of class $C^{1,r}$ whose boundary is compact, by using a
smooth partition of unity. Finally, for $-1-r<s<0$, we set
$H^s(\partial\Omega)=\big(H^{-s}(\partial\Omega)\big)^*$. For additional 
background information in this context we refer, for instance, to
\cite{Au04}, \cite{Au06}, \cite[Chs.\ V, VI]{EE89},
\cite[Ch.\ 1]{Gr85}, \cite[Ch.\ 3]{Mc00}, \cite[Sect.\ I.4.2]{Wl87}.

To see that $H^1(\partial\Omega)$ embeds compactly into 
$L^2(\partial\Omega;d^{n-1}\sigma)$ one can argue as follows: Given a Lipschitz 
domain $\Omega$ in ${\mathbb{R}}^n$, we recall that 
the Sobolev space $H^1(\partial\Omega)$ is defined as the collection of 
functions in $L^2(\partial\Omega;d^{n-1}\sigma)$ with the property that the norm of 
their tangential gradient belongs to $L^2(\partial\Omega;d^{n-1}\sigma)$. 
It is essentially well-known that an equivalent characterization
is that $f\in H^1(\partial\Omega)$ if and only if the assignment 
${\mathbb{R}}^{n-1}\ni x'\mapsto (\psi f)(x',\varphi(x'))$ is in 
$H^1({\mathbb{R}}^{n-1})$ whenever $\psi\in C^\infty_0({\mathbb{R}}^n)$
and $\varphi:{\mathbb{R}}^{n-1}\to{\mathbb{R}}$ is a Lipschitz function
with the propery that if $\Sigma$ is an appropriate rotation and
translation of $\{(x',\varphi(x'))\in\bbR^n \,|\,x'\in{\mathbb{R}}^{n-1}\}$, then 
${\rm supp\,}(\psi) \cap\partial\Omega\subset\Sigma$. This appears to be folklore, 
but a proof will appear in \cite[Proposition~2.4]{MM07}. 

From the latter characterization of $H^1(\partial\Omega)$ it follows that 
any property of Sobolev spaces (of order $1$) defined in Euclidean domains,
which are invariant under multiplication by smooth, compactly supported 
functions as well as composition by bi-Lipschitz diffeomorphisms, readily 
extends to the setting of $H^1(\partial\Omega)$ (via localization and
pull-back). As a concrete example, for each Lipschitz domain $\Omega$ 
with compact boundary, one has  
\begin{equation} \label{EQ1}
H^1(\partial\Omega)\hookrightarrow L^2(\partial\Omega;d^{n-1}\sigma)
\quad\mbox{compactly}. 
\end{equation}

Going a bit further, we say that a domain $\Omega\subset{\mathbb{R}}^n$
satisfies a {\it uniform exterior ball condition}
(abbreviated by UEBC), if there exists $R>0$ with the following
property: For each $x\in\partial\Omega$, there exists
$y=y(x)\in{\mathbb{R}}^n$ such that
\begin{equation} \label{UEBC}
\overline{B(y,R)} \, \big\backslash\{x\}\subseteq
{\mathbb{R}}^n \backslash\Omega \, \text{ and } \, x\in\partial B(y,R).
\end{equation}
We recall that any $C^{1,1}$-domain (i.e., the first-order partial derivatives of the functions defining the boundary are Lipschitz) satisfies a UEBC.

Assuming Hypothesis \ref{h2.1}, we introduce the Dirichlet and Neumann
Laplacians $\wti H_{0,\Om}^D$ and $\wti H_{0,\Om}^N$ associated
with the domain $\Om$ as the unique self-adjoint operators on
$\LOm$ whose quadratic form equals $q(f,g)=\int_\Om
d^nx\,\ol{\nabla f}\cdot \nabla g$ with (form) domains given by 
$H_0^{1}(\Om)$ and $H^{1}(\Om)$, respectively. Then,
\begin{align}
\dom\big(\wti H_{0,\Om}^D\big) &= \big\{u\in H_0^{1}(\Om) \,\big|\, \text{there
exists} \,
f\in \LOm \text{ such that } \no \\
&\hspace*{1.85cm} q(u,v)=(f,v)_{\LOm} \text{ for all } v\in
H_0^{1}(\Om)\big\},  \\
\dom\big(\wti H_{0,\Om}^N\big) &= \big\{u\in H^{1}(\Om) \,\big|\, \text{there
exists}\,
f\in \LOm \text{ such that } \no \\
&\hspace*{1.85cm} q(u,v)=(f,v)_{\LOm} \text{ for all } v\in
H^{1}(\Om)\big\},
\end{align}
with $(\cdot,\cdot)_{\LOm}$ denoting the scalar product in $\LOm$.
Equivalently, we introduce the densely defined closed linear
operators
\begin{equation}
D=\nabla, \; \dom(D)=H_0^{1}(\Om) \, \text{ and } \, N=\nabla, \;
\dom(N)=H^{1}(\Om)
\end{equation}
from $\LOm$ to $\LOm^n$ and note that
\begin{equation}
\wti H_{0,\Om}^D = D^*D \, \text{ and } \, \wti H_{0,\Om}^N =
N^*N.
\end{equation}
For details we refer to \cite[Sects.\ XIII.14, XIII.15]{RS78}.
Moreover, with ${\rm div}$ denoting the divergence
operator,
\begin{equation}
\dom(D^*)=\big\{w\in L^2(\Omega;d^nx)^n\,\big|\,{\rm div} (w)\in
L^2(\Omega;d^nx)\big\},
\end{equation}
and hence,
\begin{align}
\dom\big(\wti H_{0,\Om}^D\big) &= \{u\in\dom(D) \,|\,Du\in\dom(D^*)\} \no
\\
&= \big\{u\in H_0^{1}(\Om) \,\big|\, \Delta u\in L^{2}(\Om;d^nx)\big\}.
\lb{domHD}
\end{align}
One can also define the following bounded linear map
\begin{equation}
\begin{cases} \big\{w\in
L^2(\Omega;d^nx)^n\,\big|\,{\rm div}(w)\in \big(H^1(\Omega)\big)^*\big\} \to
H^{-1/2}(\partial\Omega)
=\big(H^{1/2}(\partial\Omega)\big)^* \\
\hspace*{5.85cm} w\mapsto \nu\cdot w  \end{cases}  \lb{A.11}
\end{equation}
by setting
\begin{equation}
\langle\nu\cdot
w,\phi\rangle=\int_{\Omega}d^nx\,w(x)\cdot\nabla\Phi(x) + \langle
{\rm div}(w)\,,\,\Phi\rangle \lb{A.11a}
\end{equation}
whenever $\phi\in H^{1/2}(\partial\Omega)$ and $\Phi\in
H^{1}(\Omega)$ is such that $\ga_D\Phi=\phi$. Here the pairing 
$\langle {\rm div}(w)\,,\,\Phi\rangle$ in (\ref{A.11a}) is the natural one 
between functionals in $\big(H^1(\Omega)\big)^*$ and elements in 
$H^1(\Omega)$ (which, in turn, is compatible with the (bilinear) distributional 
pairing). It should be remarked that the above definition is independent of the
particular extension $\Phi\in H^{1}(\Omega)$ of $\phi$. Indeed, by
linearity this comes down to proving that
\begin{equation}\label{ibp}
\langle {\rm div}(w)\,,\,\Phi\rangle
=-\int_{\Omega}d^nx\,w(x)\cdot\nabla\Phi(x)
\end{equation}
if $w\in L^2(\Omega;d^nx)^n$ has ${\rm div}(w)\in \big(H^1(\Omega)\big)^*$
and $\Phi\in H^{1}(\Omega)$ has $\ga_D\Phi=0$. To see this we
rely on the existence of a sequence $\Phi_j\in C^\infty_0(\Omega)$
such that $\Phi_j\underset{j\uparrow\infty}{\rightarrow} \Phi$ in
$H^{1}(\Omega)$. When $\Omega$ is a bounded Lipschitz domain, this
is well-known (see, e.g., \cite[Remark 2.7]{JK95} for a rather
general result of this nature), and this result is easily extended
to the case when $\Omega$ is an unbounded Lipschitz domain with a
compact boundary. Indeed, if $\xi\in C^\infty_0(B(0;2))$ is such that
$\xi= 1$ on $B(0;1)$ and $\xi_j(x)=\xi(x/j)$, $j\in\bbN$ (here
$B(x_0;r_0)$ denotes the ball in $\bbR^n$ centered at
$x_0\in\bbR^n$ of radius $r_0>0$), then
$\xi_j\Phi\underset{j\uparrow\infty}{\rightarrow}\Phi$ in
$H^{1}(\Omega)$ and matters are reduced to approximating
$\xi_j\Phi$ in $H^{1}(B(0;2j)\cap\Omega)$ with test functions
supported in $B(0;2j)\cap\Omega$, for each fixed $j\in\bbN$. Since
$\ga_D(\xi_j\Phi)=0$, the result for bounded Lipschitz domains
applies.

Returning to the task of proving (\ref{ibp}), it suffices to prove
a similar identity with $\Phi_j$ in place of $\Phi$. This, in
turn, follows from the definition of ${\rm div}(\cdot)$ in the
sense of distributions and the fact that the duality between
$\big(H^1(\Omega)\big)^*$ and $H^1(\Omega)$ is compatible with the duality
between distributions and test functions.

Going further, one can introduce a (weak) Neumann trace operator
$\wti\gamma_N$ as follows:
\begin{equation}\lb{A.16}
\wti\gamma_N\colon \big\{u\in H^{1}(\Om) \,\big|\, \Delta u \in
\big(H^1(\Om)\big)^*\big\}\to H^{-1/2}(\dOm),\quad \wti\gamma_N u=\nu\cdot
\nabla u,
\end{equation}
with the dot product  understood in the sense of \eqref{A.11}. We
emphasize that the weak Neumann trace operator $\wti\gamma_N$ in
\eqref{A.16} is a bounded extension of the operator $\gamma_N$ introduced
in \eqref{2.3}. Indeed, to see that
$\dom(\gamma_N)\subset\dom(\wti\gamma_N)$, we note that if $u\in
H^{s+1}(\Omega)$ for some $1/2<s<3/2$, then $\Delta u\in
H^{-1+s}(\Omega)=\bigl(H^{1-s}(\Omega)\bigr)^*\hookrightarrow
\bigl(H^{1}(\Omega)\bigr)^*$, by (\ref{dual-xxx}) and
(\ref{incl-xxx}). With this in hand, it is then easy to show that
$\wti\gamma_N$ in \eqref{domHN} and $\gamma_N$ in \eqref{2.3}
agree (on the smaller domain), as claimed.

We now return to the mainstream discussion. From the above
preamble it follows that
\begin{equation}
\dom(N^*)=\big\{w\in L^2(\Omega;d^nx)^n\,\big|\,{\rm div}(w)\in
L^2(\Omega;d^nx) \mbox{ and }\nu\cdot w=0\big\},
\end{equation}
where the dot product operation is understood in the sense of
\eqref{A.11}. Consequently, with $\wti H_{0,\Om}^N=N^*N$, we have
\begin{align}
\dom\big(\wti H_{0,\Om}^N\big) & = \{u\in\dom(N) \,|\,Nu\in\dom(N^*)\} \no
\\
& =\big\{u\in H^{1}(\Om) \,\big|\, \Delta u\in L^{2}(\Om;d^nx)\mbox{ and }
\wti\gamma_N u=0\big\}. \lb{domHN}
\end{align}

Next, we intend to recall that $H_{0,\Om}^D = \wti H_{0,\Om}^D$ and
$H_{0,\Om}^N = \wti H_{0,\Om}^N$, where $H_{0,\Om}^D$ and
$H_{0,\Om}^N$ denote the operators introduced in \eqref{2.4} and
\eqref{2.5}, respectively. For this purpose one can argue as follows:
Since it follows from the first Green's
formula (cf., e.g., \cite[Theorem 4.4]{Mc00}) that $H_{0,\Om}^D
\subseteq \wti H_{0,\Om}^D$ and $H_{0,\Om}^N \subseteq \wti
H_{0,\Om}^N$, it remains to show that $H_{0,\Om}^D \supseteq \wti
H_{0,\Om}^D$ and $H_{0,\Om}^N \supseteq \wti H_{0,\Om}^N$.
Moreover, it follows from comparing \eqref{2.4} with \eqref{domHD}
and \eqref{2.5} with \eqref{domHN}, that one needs only to show
that $\dom\big(\wti H_{0,\Om}^D\big)$,
$\dom\big(\wti H_{0,\Om}^N\big)\subseteq H^{2}(\Omega)$.
This is the content of the next lemma.

%%%%%%%%%%%%%%%%%%%%%%%%%%%%%%%%%%%%%%%
\begin{lemma} \lb{lA.1}
Assume Hypothesis \ref{h2.1}. Then,
\begin{equation}
\dom\big(\wti H_{0,\Om}^D\big)\subset H^{2}(\Omega), \quad \dom\big(\wti
H_{0,\Om}^N\big)\subset H^{2}(\Omega).   \lb{ADNH}
\end{equation}
Moreover,
\begin{equation}
H_{0,\Om}^D = \wti H_{0,\Om}^D, \quad H_{0,\Om}^N = \wti
H_{0,\Om}^N.   \lb{DOM}
\end{equation}
\end{lemma}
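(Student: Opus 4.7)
The plan is to observe first that, as the paper already notes, the first Green's formula yields the inclusions $H^D_{0,\Om}\subseteq\wti H^D_{0,\Om}$ and $H^N_{0,\Om}\subseteq\wti H^N_{0,\Om}$. Comparing the definitions \eqref{2.4}, \eqref{2.5} with \eqref{domHD}, \eqref{domHN}, the task reduces to establishing the $H^2$-regularity assertions in \eqref{ADNH}: if $u\in H^1_0(\Om)$ with $\Delta u\in L^2(\Om;d^nx)$, then $u\in H^2(\Om)$, and similarly for the Neumann side. Once this is shown, the trace operators $\ga_D$ and $\wti\ga_N$ can be replaced by the classical trace operators $\ga_D$ and $\ga_N$ from \eqref{2.2}, \eqref{2.3}, and the boundary conditions $\ga_D u=0$ (respectively, $\ga_N u=0$) then match the definitions of $H^D_{0,\Om}$ and $H^N_{0,\Om}$, yielding \eqref{DOM}.

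Next, I would dispose of the three alternatives of Hypothesis \ref{h2.1} one by one. For case $(i)$, when $\Om$ is of class $C^{1,r}$ with $1/2<r<1$, the required global $H^2$-regularity for the Dirichlet and Neumann problems for $-\De$ with $L^2$ data follows from the boundary layer potential techniques of Jerison and Kenig (extended to slightly smoother than Lipschitz boundaries), so the standard shift theorem places $\dom\big(\wti H^{D,N}_{0,\Om}\big)$ inside $H^2(\Om)$. For case $(ii)$, when $\Om$ is convex, I would invoke the classical theorems of Kadlec and Grisvard (see, e.g., \cite[Thms.~3.2.1.2, 3.2.1.3]{Gr85}) which assert precisely the $H^2$-regularity of weak solutions of the Dirichlet and Neumann problems on convex domains. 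For case $(iii)$, where $\Om$ is Lipschitz and satisfies the UEBC, I would invoke the generalization of the convex-domain regularity result due to Adolfsson (and subsequent refinements by Adolfsson, Jerison, and Mitrea–Taylor), whose main point is that the UEBC provides just enough geometric convexity-like control at the boundary to run a Rellich-identity argument and deduce $\nabla^2 u\in L^2(\Om;d^nx)$ from $\De u\in L^2(\Om;d^nx)$ together with the homogeneous Dirichlet, respectively, Neumann boundary condition.

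In each of the three cases, the upshot is the same: for $u\in\dom\big(\wti H^D_{0,\Om}\big)$ one obtains $u\in H^2(\Om)$ with $\ga_D u=0$ (this last equality holds automatically because $u\in H^1_0(\Om)$ and $\ga_D$ extends continuously to $H^1(\Om)$), so $u\in\dom(H^D_{0,\Om})$; and analogously, for $u\in\dom\big(\wti H^N_{0,\Om}\big)$ one has $u\in H^2(\Om)$, and $\wti\ga_N u=0$ coincides with $\ga_N u=0$ once $u\in H^2(\Om)$ is known, placing $u$ in $\dom(H^N_{0,\Om})$. Combined with the Green's formula inclusions, this yields \eqref{ADNH} and \eqref{DOM}.

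The main obstacle, and indeed the only substantive analytic work, lies in case $(iii)$: unlike the convex or $C^{1,r}$ settings, in a general Lipschitz domain global $H^2$-regularity for $-\De$ with $L^2$ data can genuinely fail at reentrant corners, and the UEBC is exactly the extra geometric ingredient needed to rule this out. The key technical step there is a Rellich-type boundary identity (together with a density/approximation argument in a suitable class of Lipschitz domains with UEBC) that transfers an $L^2$-bound on $\De u$ to an $L^2$-bound on the full Hessian of $u$; I would present this as the crucial input, referring to the literature for the quantitative estimates and citing the relevant regularity theorems rather than reproducing their proofs.
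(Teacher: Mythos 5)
Your proposal takes essentially the same route as the paper: reduce via Green's formula to the $H^2$-regularity claims and then cite the classical shift theorems case by case — Jerison--Kenig-type layer potentials for $C^{1,r}$ (as in \cite[Appendix~A]{GLMZ05}), Kadlec/Talenti and Grisvard--Iooss for convex domains, and Rellich-identity arguments for Lipschitz domains satisfying UEBC (the paper cites Mitrea \cite{Mi01} where you cite the Adolfsson/Adolfsson--Jerison/Mitrea--Taylor line, but these are the same circle of ideas). The only point you pass over is the extension from bounded domains to unbounded $\Omega$ with compact boundary in cases (i) and (iii), which the paper handles by a localization/covering argument; this is a small but real step and is worth flagging explicitly.
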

%%%%%%%%%%%%%%%%%%%%%%%%%%%%%%%%%%%%%%%

For $C^{1,r}$-domains $\Om$, $1/2 <r<1$, Lemma \ref{lA.1} was proved in
\cite[Appendix A]{GLMZ05}. For bounded convex domains $\Om$,
$\dom\big(\wti H_{0,\Om}^D\big)\subset H^2(\Om)$ was shown by Kadlec \cite{Ka64}
and Talenti \cite{Ta65} and $\dom\big(\wti H_{0,\Om}^N\big)\subset H^2(\Om)$ was
proved by Grisvard and Ioss \cite{GI75}. A unified approach to Dirichlet and Neumann problems in bounded convex domains, which also applies to bounded Lipschitz domains satisfying UEBC, has been presented by Mitrea \cite{Mi01}.
The extension to domains $\Omega$ with a compact boundary satisfying UEBC then follows as described in the paragraph following
\eqref{dual-xxx}. This establishes \eqref{ADNH} and hence \eqref{DOM} as discussed after \eqref{domHN}.

We note that Lemma \ref{lA.1} also follows from \cite[Theorem
8.2]{DHP03} in the case of $C^2$-domains $\Om$ with compact
boundary. This is proved in \cite{DHP03} by rather different
methods and can be viewed as a generalization of the classical
result for bounded $C^2$-domains.

As shown in \cite[Lemma A.2]{GLMZ05}, \eqref{ADNH} and methods of real interpolation
spaces yield the following key result \eqref{new6.45} needed in the main body of this paper:

%%%%%%%%%%%%%%%%%%%%%%%%%%%%%%%%%%%%%
\begin{lemma}\label{lA.2}
Assume Hypothesis \ref{h2.1} and let $q\in[0,1]$. Then for each
$z\in\bbC\backslash[0,\infty)$, one has
\begin{equation}\label{new6.45}
\big(H_{0,\Om}^D-zI_{\Om}\big)^{-q},\,
\big(H_{0,\Om}^N-zI_{\Om}\big)^{-q}\in\cB\big(\LOm,H^{2q}(\Om)\big).
\end{equation}
\end{lemma}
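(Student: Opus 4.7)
The plan is to deduce the two endpoint cases $q=0$ and $q=1$ first, and then interpolate. The case $q=0$ is trivial since $(H_{0,\Omega}^{D/N}-zI_\Omega)^0=I_\Omega\in\cB(\LOm,\LOm)=\cB(\LOm,H^0(\Omega))$. For $q=1$, I would argue as follows: by Lemma~\ref{lA.1}, $\dom(H_{0,\Om}^{D/N})\subset H^2(\Om)$. The inclusion, regarded as a linear map from the Hilbert space $\dom(H_{0,\Om}^{D/N})$ equipped with the graph norm into $H^2(\Om)$, has closed graph (any sequence converging in both spaces converges in $\LOm$, so the two limits coincide), hence is bounded by the closed graph theorem. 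Composing with the bound $\|(H_{0,\Om}^{D/N}-zI_\Om)^{-1}\|_{\cB(\LOm,\dom(H_{0,\Om}^{D/N}))}\leq C(z)$ (with graph norm on the target, coming from the spectral theorem) yields the desired boundedness in $\cB(\LOm,H^2(\Om))$.

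For intermediate $q\in(0,1)$, the plan is to invoke real interpolation (with parameter $p=2$). The classical identification for nonnegative self-adjoint operators on a Hilbert space,
\begin{equation*}
\dom((H_{0,\Om}^{D/N}-zI_\Om)^q)=(\LOm,\dom(H_{0,\Om}^{D/N}))_{q,2}
\end{equation*}
(with equivalent norms), shows that $(H_{0,\Om}^{D/N}-zI_\Om)^{-q}$ maps $\LOm$ boundedly into the interpolation space on the right. Combining this with the continuous embedding $\dom(H_{0,\Om}^{D/N})\hookrightarrow H^2(\Om)$ established at the $q=1$ step gives the continuous map
\begin{equation*}
(H_{0,\Om}^{D/N}-zI_\Om)^{-q}:\LOm\to (\LOm,H^2(\Om))_{q,2}.
\end{equation*}

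The final step is to match the interpolation scale of Sobolev spaces: on any Lipschitz domain (in particular, on any $\Om$ satisfying Hypothesis~\ref{h2.1}), one has
\begin{equation*}
(\LOm,H^2(\Om))_{q,2}=H^{2q}(\Om),\quad 0\leq q\leq 1,
\end{equation*}
with equivalent norms. This is a standard fact whose proof reduces, via the total extension operator used in \eqref{2.9} (see \cite{Ry99}), to the corresponding identity $(L^2(\bbR^n;d^nx),H^2(\bbR^n))_{q,2}=H^{2q}(\bbR^n)$, which is immediate from the Fourier-analytic definition of $H^s(\bbR^n)$. I would cite \cite{Tr02} for the statement in the form required. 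Assembling the three displays yields \eqref{new6.45}.

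The main obstacle is the third step, namely verifying the Sobolev interpolation identity $(\LOm,H^2(\Om))_{q,2}=H^{2q}(\Om)$ in the generality of Hypothesis~\ref{h2.1}, which permits unbounded $\Om$ with merely Lipschitz boundary. The existence of a universal extension operator $\cE$ in \eqref{2.9}, valid for all $s\in\bbR$ on arbitrary Lipschitz domains, is the crucial input that reduces this identity to its Euclidean counterpart; granted this, the argument is routine. Everything else is standard abstract interpolation combined with Lemma~\ref{lA.1}.
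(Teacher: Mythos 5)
Your proposal is correct and matches the route the paper itself indicates: the paper derives Lemma~\ref{lA.2} precisely from the domain inclusions \eqref{ADNH} of Lemma~\ref{lA.1} together with real interpolation (delegating the details to \cite[Lemma A.2]{GLMZ05}), which is exactly your three-step argument of establishing the endpoint $q=1$ via $\dom\big(H_{0,\Om}^{D/N}\big)\hookrightarrow H^2(\Om)$ and then interpolating using $\dom\big((H_{0,\Om}^{D/N}-zI_\Om)^q\big)=(\LOm,\dom(H_{0,\Om}^{D/N}))_{q,2}$ and $(\LOm,H^2(\Om))_{q,2}=H^{2q}(\Om)$. The one point worth flagging as genuinely load-bearing, which you correctly single out, is the Sobolev interpolation identity on a possibly unbounded Lipschitz $\Om$; the universal extension operator of \cite{Ry99} is indeed what reduces this to the Euclidean case, so the argument is complete.
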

%%%%%%%%%%%%%%%%%%%%%%%%%%%%%%%%%%%%%

Finally, we recall an extension of a result of Nakamura
\cite[Lemma 6]{Na01} from a cube in $\bbR^n$ to a Lipschitz domain
$\Om$. This requires some preparation. First, we note that
(\ref{A.16}) and (\ref{A.11a}) yield the following Green formula
\begin{equation}
\langle\wti\gamma_N u,\ga_D\Phi\rangle = \big(\ol{\nabla u},
\nabla \Phi\big)_{\LOm^n} + \langle\Delta u,\Phi\rangle,
\lb{wGreen}
\end{equation}
valid for any $u\in H^{1}(\Om)$ with $\Delta
u\in\big(H^{1}(\Om)\big)^*$, and any $\Phi\in H^{1}(\Om)$. The
pairing on the left-hand side of (\ref{wGreen}) is between
functionals in $\big(H^{1/2}(\dOm)\big)^*$ and elements in
$H^{1/2}(\dOm)$, whereas the last pairing on the right-hand side
is between functionals in $\big(H^{1}(\Om)\big)^*$ and elements in
$H^{1}(\Om)$. For further use, we also note that the adjoint of
\eqref{2.2} maps boundedly as follows
\begin{equation}
\ga_D^* : \big(H^{s-1/2}(\dOm)\big)^* \to (H^{s}(\Om)\big)^*,
\quad 1/2<s<3/2. \lb{ga*}
\end{equation}
Next, one observes that the operator $\big(\wti
H^N_{0,\Om}-zI_\Om\big)^{-1}$, $z\in\bbC\big\backslash\si\big(\wti
H^N_{0,\Om}\big)$, originally defined as
\begin{equation}\label{fukcH}
\big(\wti H^N_{0,\Om}-zI_\Om\big)^{-1}:\LOm\to \LOm,
\end{equation}
can be extended to a bounded operator, mapping
$\big(H^{1}(\Om)\big)^*$ into $L^2(\Omega;d^nx)$. Specifically,
since $\big(\wti H^N_{0,\Om}-\ol{z}I_\Om\big)^{-1}: \LOm \to \dom\big(\wti
H^N_{0,\Om}\big)$ is bounded and since the inclusion $\dom\big(\wti
H^N_{0,\Om}\big)\hookrightarrow H^{1}(\Om)$ is bounded, we can
naturally view $\big(\wti H^N_{0,\Om}-\ol{z}I_\Om\big)^{-1}$ as an
operator
\begin{equation}
\big(\hatt H^N_{0,\Om}-\ol{z}I_\Om\big)^{-1} \colon \LOm \to H^1(\Om)
\end{equation}
mapping in a linear, bounded fashion. Consequently, for its
adjoint, we have
\begin{equation}\label{fukcH-bis}
\big(\big(\hatt H^N_{0,\Om}-\ol{z}I_\Om\big)^{-1}\big)^* :
\big(H^{1}(\Om)\big)^* \to \LOm,
\end{equation}
and it is easy to see that this latter operator extends the one in
\eqref{fukcH}. Hence, there is no ambiguity in retaining the same
symbol, that is, $\big(\wti H^N_{0,\Om}-\ol{z}I_\Om\big)^{-1}$, both for
the operator in (\ref{fukcH-bis}) as well as for the operator in
(\ref{fukcH}). Similar considerations and conventions apply to
$\big(\wti H^D_{0,\Om}-zI_\Om\big)^{-1}$.

Given these preparations, we now state without proof (and for the convenience of the reader) the following result proven in \cite[Lemma A.3]{GLMZ05} (an extension of a result 
proven in \cite{Na01}).

%%%%%%%%%%%%%%%%%%%%%%%%%%%%%%%%%%%%%
\begin{lemma} \lb{lA.3}
Let $\Om\subset\bbR^n$, $n\geq2$, be an open Lipschitz domain and let
$z\in\bbC\big\backslash\big(\si\big(\wti H^D_{0,\Om}\big)\cup\si\big(\wti
H^N_{0,\Om}\big)\big)$. Then, on $L^2(\Omega;d^nx)$,
\begin{equation} \lb{Na1}
\big(\wti H^D_{0,\Om}-zI_\Om\big)^{-1} - \big(\wti H^N_{0,\Om}-zI_\Om\big)^{-1} =
\big(\wti H^N_{0,\Om}-zI_\Om\big)^{-1}\ga_D^*\wti\gamma_N \big(\wti
H^D_{0,\Om}-zI_\Om\big)^{-1},
\end{equation}
where $\ga_D^*$ is an adjoint operator to $\ga_D$ in the sense of
\eqref{ga*}
\end{lemma}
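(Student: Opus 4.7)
The natural approach is to verify the identity weakly by pairing both sides against arbitrary $f,g \in L^2(\Om;d^nx)$ and reducing the discrepancy to a boundary computation via Green's formula. Concretely, set
\[
u = \big(\wti H^D_{0,\Om}-zI_\Om\big)^{-1}f, \qquad v = \big(\wti H^N_{0,\Om}-\ol{z}I_\Om\big)^{-1}g.
\]
By Lemma \ref{lA.1}, both $u$ and $v$ belong to $H^{2}(\Om)$ and satisfy $(-\Delta-z)u=f$, $(-\Delta-\ol{z})v=g$, together with $\ga_D u=0$ and $\wti\gamma_N v=0$. This upgrades all ensuing pairings from the weak $H^{-1/2}$--$H^{1/2}$ duality on $\dOm$ to genuine $\LdOm$ inner products, so no distributional subtleties arise.

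The plan is then to compute $(u,g)_{\LOm}-(f,v)_{\LOm}$ in two different ways. On the one hand, substituting the resolvent definitions,
\[
(u,g)_{\LOm}-(f,v)_{\LOm} = \int_\Om \big[(\Delta\ol{u})v - \ol{u}(\Delta v)\big]\, d^nx,
\]
the $z$- and $\ol{z}$-contributions canceling since $\Delta$ commutes with complex conjugation. Applying the weak Green's formula \eqref{wGreen} twice (once to $(u,\Delta v)$ and once to $(v,\Delta u)$), the gradient terms cancel pairwise and I am left with the pure boundary pairing
\[
(u,g)_{\LOm}-(f,v)_{\LOm} = \langle\wti\gamma_N u,\ga_D v\rangle_{\dOm} - \langle\ga_D u,\wti\gamma_N v\rangle_{\dOm},
\]
of which the second term vanishes by the boundary conditions on $u$ and $v$.

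Next, I rewrite the surviving boundary pairing via the adjoint relation
\[
\langle\wti\gamma_N u,\ga_D v\rangle_{\dOm} = \langle\ga_D^*\wti\gamma_N u, v\rangle_{(H^1(\Om))^*\times H^1(\Om)},
\]
in the sense of \eqref{ga*}. Since $\wti H^N_{0,\Om}$ is self-adjoint and, by the discussion culminating in \eqref{fukcH-bis}, $\big(\wti H^N_{0,\Om}-zI_\Om\big)^{-1}$ extends to a bounded map $(H^1(\Om))^*\to\LOm$ whose adjoint coincides with $\big(\wti H^N_{0,\Om}-\ol{z}I_\Om\big)^{-1}:\LOm\to H^1(\Om)$, I can move this resolvent across the duality to obtain
\[
\langle\ga_D^*\wti\gamma_N u,v\rangle = \Big(\big(\wti H^N_{0,\Om}-zI_\Om\big)^{-1}\ga_D^*\wti\gamma_N u,\,g\Big)_{\LOm}.
\]
Comparing with the direct expression $(u,g)_{\LOm}-(f,v)_{\LOm} = \Big(\big[\big(\wti H^D_{0,\Om}-zI_\Om\big)^{-1} - \big(\wti H^N_{0,\Om}-zI_\Om\big)^{-1}\big] f,\,g\Big)_{\LOm}$ and invoking arbitrariness of $f,g\in\LOm$ yields \eqref{Na1}.

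The main technical nuisance lies in bookkeeping complex conjugates and the various duality brackets (between $H^{-1/2}(\dOm)$ and $H^{1/2}(\dOm)$, and between $(H^1(\Om))^*$ and $H^1(\Om)$), ensuring in particular that the extension \eqref{fukcH-bis} is used consistently on the correct side. All the genuine PDE/functional-analytic content is already packaged in Lemma \ref{lA.1} (providing $H^2$-regularity so that classical Green's identity suffices) and in the adjointness machinery of \eqref{ga*}--\eqref{fukcH-bis}, so no new estimates are required.
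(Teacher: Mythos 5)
Your argument is correct in its mathematical essentials and follows the natural route: pairing against test functions, two applications of the weak Green's formula \eqref{wGreen}, and the adjointness machinery in \eqref{ga*}--\eqref{fukcH-bis}. Note that this paper records Lemma~\ref{lA.3} without proof, deferring to \cite[Lemma~A.3]{GLMZ05}, so there is no in-paper argument to compare against.

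One misstatement should be corrected. You invoke Lemma~\ref{lA.1} to conclude $u,v\in H^2(\Om)$, but Lemma~\ref{lA.1} requires Hypothesis~\ref{h2.1} ($C^{1,r}$, convex, or UEBC Lipschitz), whereas Lemma~\ref{lA.3} is stated for an \emph{arbitrary} open Lipschitz domain, where $H^2$-regularity of the resolvents generally fails. Fortunately the computation does not actually need it: the domain descriptions \eqref{domHD} and \eqref{domHN} already give $u\in H^1_0(\Om)$, $v\in H^1(\Om)$ with $\Delta u,\Delta v\in\LOm\hookrightarrow (H^1(\Om))^*$, which is precisely the hypothesis for the weak Green's formula \eqref{wGreen} that you in fact apply. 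So the appeal to Lemma~\ref{lA.1}, together with the claimed upgrade of the boundary pairings to genuine $\LdOm$ inner products, should simply be removed, keeping the pairings as $H^{-1/2}(\dOm)$--$H^{1/2}(\dOm)$ dualities throughout.
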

%%%%%%%%%%%%%%%%%%%%%%%%%%%%%%%%%%%%%

%%%%%%%%%%%%%%%%%%%%%%%%%%%%%%%%%%%%%
\begin{remark} \lb{rA.4}
While it is tempting to view $\ga_D$ as an unbounded but densely
defined operator on $\LOm$ whose domain contains the space
$C_0^\infty(\Om)$, one should note that in this case its adjoint
$\ga_D^*$ is not densely defined: Indeed (cf.\ \cite[Remark A.4]{GLMZ05}),
 $\dom(\gamma_D^*)=\{0\}$ and hence $\gamma_D$
is not a closable linear operator in $\LOm$.
\end{remark}
%%%%%%%%%%%%%%%%%%%%%%%%%%%%%%%%%%%%

%%%%%%%%%%%%%%%%%%%%%%%%%%%%%%%%%%%%
\begin{remark} \lb{rA.5}
In the case of domains $\Om$ satisfying Hypothesis \ref{h2.1}, Lemma
\ref{lA.1} implies that the
operators $\wti H^D_{0,\Om}$ and $\wti H^N_{0,\Om}$ coincide with
the operators $H^D_{0,\Om}$ and $H^N_{0,\Om}$, respectively, and
hence one can use the operators $H^D_{0,\Om}$ and $H^N_{0,\Om}$ in
Lemma \ref{lA.3}. Moreover, since $\dom\big(H^D_{0,\Om}\big) \subset
H^2(\Om)$, one can also replace $\wti\gamma_N$ by $\ga_N$ (cf.\
\eqref{2.3}) in Lemma \ref{lA.3}. In particular,
\begin{align}
\begin{split}
&\big(H^D_{0,\Om}-zI_\Om\big)^{-1} - \big(H^N_{0,\Om}-zI_\Om\big)^{-1}
=\big[\ga_D \big(H^N_{0,\Om}-\ol{z}I_\Om\big)^{-1}\big]^*\gamma_N
\big(H^D_{0,\Om}-zI_\Om\big)^{-1},   \lb{Na1-bis}  \\
& \hspace*{7.6cm} z\in\bbC\big\backslash\big(\si\big(H^D_{0,\Om}\big)
\cup\si\big(H^N_{0,\Om}\big)\big),
\end{split}
\end{align}
a result exploited in the proof of Theorem \ref{t4.1} (cf.\ \eqref{4.13}).
\end{remark}
%%%%%%%%%%%%%%%%%%%%%%%%%%%%%%%%%%%%%

Finally, we prove the following result used in the proof of Lemma \ref{l3.4}.

%%%%%%%%%%%%%%%%%%%%%%%%%%%%%%%%%%%%
\begin{lemma} \lb{lA.6}
Suppose $\Om\subset\bbR^n$, $n\geq 2$, is an open 
Lipschitz domain with a compact, nonempty boundary $\dOm$. Then
the Dirichlet trace operator $\ga_D$ satisfies the following  
property $($see also \eqref{2.2}$)$,
\begin{equation}
\ga_D\in \cB\big(H^{(3/2)+\eps}(\Om), H^{1}(\dOm)\big), \quad \eps>0.
\lb{A.62}
\end{equation}
\end{lemma}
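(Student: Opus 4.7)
The strategy is to work locally and exploit the chain rule. Without loss of generality I may assume $0 < \eps < 1$, since for larger $\eps$ the claim follows from the continuous embedding $H^{(3/2)+\eps}(\Om) \hookrightarrow H^{(3/2)+\eps'}(\Om)$ for any $0<\eps'<\min(\eps,1)$. Since $\dOm$ is compact, I fix a finite partition of unity $\{\psi_j\}$ subordinate to a cover of $\dOm$ by coordinate patches, each of which (after a rigid motion) identifies the local piece of $\dOm$ with the graph of a Lipschitz function $\varphi_j : \bbR^{n-1} \to \bbR$. As mentioned in Appendix \ref{sA} and in \cite[Proposition 2.4]{MM07}, $H^1(\dOm)$ is characterized precisely by the requirement that $(\psi_j f)(x',\varphi_j(x'))$ belongs to $H^1(\bbR^{n-1})$ for each $j$. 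Thus it suffices to estimate, for each $j$, the function $v_j(x') := (\psi_j \gamma_D u)(x',\varphi_j(x'))$ in $H^1(\bbR^{n-1})$ in terms of $\|u\|_{H^{(3/2)+\eps}(\Om)}$.

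Dropping the index $j$, the aim is to establish that, when $u \in H^{(3/2)+\eps}(\Om)$ with $0<\eps<1$, one has $v \in H^1(\bbR^{n-1})$ together with a norm estimate. The formal chain rule reads
\begin{equation*}
\partial_k v(x') = \ga_D(\partial_k u)(x',\varphi(x')) + \ga_D(\partial_n u)(x',\varphi(x'))\,\partial_k\varphi(x'), \quad k=1,\dots,n-1,
\end{equation*}
and the bound then follows at once, because $\partial_k \varphi \in L^\infty(\bbR^{n-1})$ by Rademacher's theorem and each $\ga_D(\partial_\ell u)$ lies in $L^2(\dOm)$: indeed, $\nabla u \in H^{(1/2)+\eps}(\Om)^n$ and the standard Lipschitz trace theorem \eqref{2.2} applied with $s = (1/2)+\eps \in (1/2,3/2)$ yields
\begin{equation*}
\ga_D(\nabla u) \in H^{\eps}(\dOm)^n \hookrightarrow L^2(\dOm)^n
\end{equation*}
with continuous dependence on $\|u\|_{H^{(3/2)+\eps}(\Om)}$.

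The remaining task is to justify the chain rule at the level of Sobolev regularity. I would do so by density: for any Lipschitz domain $\Om$, $C^\infty(\ol{\Om})$ (or rather, its restriction to $\Om$) is dense in $H^{(3/2)+\eps}(\Om)$. Pick $u_m \in C^\infty(\ol{\Om})$ with $u_m \to u$ in $H^{(3/2)+\eps}(\Om)$. Each $u_m$ satisfies the classical chain rule pointwise, so the corresponding $v_m(x') := (\psi u_m)(x',\varphi(x'))$ satisfies the displayed identity. By the trace bound of the previous paragraph applied to $u_m$ and to $u_m - u$, it follows that $v_m \to v$ in $L^2(\bbR^{n-1})$ and that the right-hand sides converge in $L^2(\bbR^{n-1})$ as well. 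Hence the identity is preserved in the distributional sense in the limit, which proves $v \in H^1(\bbR^{n-1})$ with the desired estimate. Summing over the finite partition of unity yields \eqref{A.62}.

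The main technical obstacle lies in the chain rule step, as Sobolev spaces $H^s(\dOm)$ for $s>1$ are not naturally available on a general Lipschitz boundary, and one must therefore bypass any direct use of them. The approach above circumvents this by computing tangential derivatives via the chain rule after localization and by bounding each resulting term with the standard Lipschitz trace theorem applied to $\nabla u$ in the admissible range $1/2 < (1/2)+\eps < 3/2$. All other ingredients---density of smooth functions, Rademacher's theorem, and the graph characterization of $H^1(\dOm)$---are standard facts for Lipschitz domains.
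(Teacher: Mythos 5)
Your argument is correct, and its core engine is the same as the paper's: reduce matters to the $L^2(\dOm)$-trace of $\nabla u$ (available because $\nabla u \in H^{(1/2)+\eps}(\Om)^n$ and $(1/2)+\eps$ lies in the admissible range of \eqref{2.2}), and justify the passage from derivatives of $u$ to derivatives of $\ga_D u$ by a density argument using $C^\infty(\ol{\Om})$-approximation. The difference is which characterization of $H^1(\dOm)$ you target. You localize via a partition of unity, flatten the boundary to a Lipschitz graph, and apply the chain rule in the flat coordinates, thereby verifying the graph-parametrization definition of $H^1(\dOm)$ (the one recalled in the preamble to Appendix~\ref{sA} and attributed to \cite{MM07}). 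The paper instead avoids localization entirely: it works with the intrinsic tangential derivative operators $\partial/\partial\tau_{j,k}=\nu_k\partial_j-\nu_j\partial_k$, uses the surface integration-by-parts identity \eqref{A.31} to pass a tangential derivative from $u_i$ to the test function, and verifies the weak (duality) characterization \eqref{A.64}. Your version is more hands-on (at the cost of managing charts, cutoffs, and the $\partial_k\varphi$ factors from Rademacher's theorem), while the paper's is globally phrased and the relation between tangential derivatives of $\ga_D u$ and $\ga_D(\nabla u)$ is built into the definition of $\partial/\partial\tau_{j,k}$, so the pointwise chain-rule bookkeeping disappears. Two small polishing remarks on your write-up: when you "drop the index $j$" you should still carry the cutoff $\psi$, since $\partial_k(\psi\,(\ga_D u)\circ\mathrm{graph})$ has an extra $(\partial_k\psi)$-term (harmless, as it sits in $L^2$); and the reduction to $0<\eps<1$ is indeed needed so that $s=(1/2)+\eps$ stays in the window $(1/2,3/2)$ of \eqref{2.2}, just as the paper implicitly uses \eqref{incl-xxx} for the same purpose.
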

%%%%%%%%%%%%%%%%%%%%%%%%%%%%%%%%%%%%
\begin{proof}
First, we recall one of the equivalent definitions of $H^{1}(\dOm)$, specifically, 
\begin{equation}
H^{1}(\dOm) = \big\{f\in\LdOm \,\big|\, \partial
f/\partial\tau_{j,k}\in\LdOm, \; j,k = 1,\dots,n\big\},
\end{equation}
where
$\partial/\partial\tau_{k,j}=\nu_k\partial_j-\nu_j\partial_k$,
$j,k=1,\dots,n$, is a tangential derivative operator (cf.\
\eqref{A.31}), or equivalently,
\begin{align}
H^{1}(\dOm) &= \bigg\{f\in\LdOm \,\bigg|\, \, \text{there exists a constant $c>0$
such that for every $v\in C_0^\infty(\bbR^n)$,}  \no \\
&\hspace*{28mm} \bigg|\int_\dOm d^{n-1}\si f\,\partial
v/\partial\tau_{j,k}\bigg| \leq c \norm{v}_{\LdOm},\;
j,k=1,\dots,n\bigg\}. \lb{A.64}
\end{align}

Next, let $u\in H^{(3/2)+\eps}(\Om)$, $v\in C_0^\infty(\bbR^n)$,
and $u_i\in C^\infty(\ol{\Om})\hookrightarrow
H^{(3/2)+\eps}(\Om)$, $i\in\bbN$, be a sequence of functions
approximating $u$ in $H^{(3/2)+\eps}(\Om)$. It follows from
\eqref{2.2} and \eqref{incl-xxx} that $\ga_D u, \ga_D(\nabla
u)\in\LdOm$. Introducing the tangential derivative operator
$\partial/\partial\tau_{k,j}=\nu_k\partial_j-\nu_j\partial_k$,
$j,k=1,\dots,n$, one has
\begin{equation} \lb{A.31}
\int_{\partial\Omega} d^{n-1}\sigma \, \frac{\partial
h_1}{\partial\tau_{j,k}}h_2 =-\int_{\partial\Omega}  d^{n-1}\sigma
\, h_1\frac{\partial h_2}{\partial\tau_{j,k}}, \quad h_1, h_2\in
H^{1/2}(\partial\Omega).
\end{equation}
Utilizing \eqref{A.31}, one computes for all $j,k=1,\dots,n$,
\begin{align}
\bigg|\int_\dOm d^{n-1}\si\, \ga_D u \frac{\partial
v}{\partial\tau_{j,k}} \bigg| &= \bigg|\lim_{i\to\infty} \int_\dOm
d^{n-1}\si\, u_i \frac{\partial v}{\partial\tau_{j,k}}\bigg| =
\bigg|\lim_{i\to\infty} \int_\dOm d^{n-1}\si\, v \frac{\partial
u_i}{\partial\tau_{j,k}}\bigg| \lb{A.65}
\\ &\leq
c \, \bigg|\lim_{i\to\infty} \int_\dOm d^{n-1}\si\,v\, \ga_D(\nabla
u_i)\bigg| \leq c\norm{\ga_D(\nabla u)}_{\LdOm} \norm{v}_{\LdOm}.
\no
\end{align}
Thus, it follows from \eqref{A.64} and \eqref{A.65} that $\ga_D
u \in H^1(\dOm)$.
\end{proof}
%%%%%%%%%%%%%%%%%%%%%%%%%%%%%%%%%%%%%%

%%%%%%%%%%%%%%%%%%%%%%%%%%%%%%%%%%%%%%
%%%%%%%%%%%% appendix B %%%%%%%%%%%%%%
\section{Abstract Perturbation Theory}
\lb{sB}
\renewcommand{\theequation}{B.\arabic{equation}}
\renewcommand{\thetheorem}{B.\arabic{theorem}}
\setcounter{theorem}{0} \setcounter{equation}{0}
%%%%%%%%%%%%%%%%%%%%%%%%%%%%%%%%%%%%%%
%%%%%%%%%%%%%%%%%%%%%%%%%%%%%%%%%%%%%%

The purpose of this appendix is to summarize some of the abstract perturbation results  in \cite{GLMZ05} which where motivated by Kato's pioneering work \cite{Ka66} (see also \cite{Ho70}, \cite{KK66}) as they are needed in this paper.

We introduce the following set of assumptions.

%%%%%%%%%%%%%%%%%%%%%%%%%%%%%%%%%%%%%%
\begin{hypothesis} \lb{hB.1}
Let $\cH$ and $\cK$ be separable, complex Hilbert spaces. \\
$(i)$ Suppose that $H_0\colon\dom(H_0)\to\cH$,
$\dom(H_0)\subseteq\cH$ is a densely defined, closed, linear
operator in $\cH$ with nonempty resolvent set,
\begin{equation}
\rho(H_0)\neq\emptyset,
\end{equation}
$A\colon\dom(A)\to\cK$, $\dom(A)\subseteq\cH$ a densely defined,
closed, linear operator from $\cH$ to $\cK$, and
$B\colon\dom(B)\to\cK$, $\dom(B)\subseteq\cH$ a densely defined,
closed, linear operator from $\cH$ to $\cK$ such that
\begin{equation}
\dom(A)\supseteq\dom(H_0), \quad \dom(B)\supseteq\dom(H_0^*).
\end{equation}
In the following we denote
\begin{equation}
R_0(z)=(H_0-zI_{\cH})^{-1}, \quad z\in \rho(H_0).
\end{equation}
$(ii)$ Assume that for some $($and hence for all\,$)$ $z\in\rho(H_0)$, the
operator $-AR_0(z)B^*$, defined on $\dom(B^*)$, has a bounded
extension in $\cK$, denoted by $K(z)$,
\begin{equation}
K(z)=-\ol{AR_0(z)B^*} \in\cB(\cK). \lb{B.4}
\end{equation}
$(iii)$ Suppose that $1\in\rho(K(z_0))$ for some $z_0\in \rho(H_0)$. \\
$(iv)$ Assume that $K(z)\in\cB_\infty(\cK)$ for all $z\in\rho(H_0)$.
\end{hypothesis}
%%%%%%%%%%%%%%%%%%%%%%%%%%%%%%%%%%%%%%

Next, following Kato \cite{Ka66}, one introduces
\begin{equation}
R(z)=R_0(z)-\ol{R_0(z)B^*}[I_{\cK}-K(z)]^{-1}AR_0(z),
\quad  z\in\{\zeta\in\rho(H_0) \,|\, 1\in\rho(K(\zeta))\}.  \lb{B.5}
\end{equation}

%%%%%%%%%%%%%%%%%%%%%%%%%%%%%%%%%%%%%%
\begin{theorem} [\cite{GLMZ05}] \lb{tB.2}
Assume Hypothesis \ref{hB.1}\,$(i)$--$(iii)$ and suppose
$z\in\{\zeta\in\rho(H_0)\,|\, 1\in\rho(K(\zeta))\}$. Then,
$R(z)$ introduced in \eqref{B.5} defines a densely defined,
closed, linear operator $H$ in $\cH$ by
\begin{equation}
R(z)=(H-zI_{\cH})^{-1}.
\end{equation}
In addition,
\begin{equation}
AR(z), BR(z)^* \in \cB(\cH,\cK) \lb{B.7}
\end{equation}
and
\begin{align}
R(z)&=R_0(z)-\ol{R(z)B^*}AR_0(z) \lb{B.8} \\
&=R_0(z)-\ol{R_0(z)B^*}AR(z).    \lb{B.9}
\end{align}
Moreover, $H$ is an extension of
$(H_0+B^*A)|_{\dom(H_0)\cap\dom(B^*A)}$ $($the latter
intersection domain may consist of $\{0\}$ only$)$,
\begin{equation}
H\supseteq (H_0+B^*A)|_{\dom(H_0)\cap\dom(B^*A)}.
\end{equation}
Finally, assume that $H_0$ is self-adjoint in $\cH$. Then $H$ is also
self-adjoint if
\begin{equation}
(Af,Bg)_{\cK}=(Bf,Ag)_{\cK} \, \text{ for all } \,
f,g\in\dom(A)\cap\dom(B). \lb{B.11}
\end{equation}
\end{theorem}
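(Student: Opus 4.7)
The plan is to verify the statement in four stages: boundedness of $R(z)$, the resolvent identity for $R(\cdot)$, the identification with the resolvent of a closed operator $H$, and finally the auxiliary identities together with the extension and self-adjointness properties.

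First, I would verify that $R(z)$ in \eqref{B.5} is a well-defined element of $\cB(\cH)$. Since $\dom(A)\supseteq\dom(H_0)$, the operator $AR_0(z)$ is closed and everywhere defined on $\cH$, hence bounded by the closed graph theorem. Similarly, $R_0(z)^*\cH=\dom(H_0^*)\subseteq\dom(B)$ gives $BR_0(z)^*\in\cB(\cH,\cK)$, so that by adjunction $R_0(z)B^*$ admits the bounded closure $\overline{R_0(z)B^*}=(BR_0(z)^*)^*\in\cB(\cK,\cH)$. Combined with $[I_\cK-K(z)]^{-1}\in\cB(\cK)$, which holds by assumption for the $z$ under consideration, this shows that all three factors on the right-hand side of \eqref{B.5} are bounded.

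Next, I would check that $R(\cdot)$ satisfies the first resolvent identity $R(z_1)-R(z_2)=(z_1-z_2)R(z_1)R(z_2)$. The algebraic ingredients are the resolvent identity for $R_0$, the relation $K(z_1)-K(z_2)=-(z_1-z_2)\,\overline{AR_0(z_1)R_0(z_2)B^*}$, and the standard second-resolvent-type identity for $[I_\cK-K(\cdot)]^{-1}$. Injectivity of $R(z)$ follows by applying $A$ to $R(z)f=0$, which reduces the equation to $[I_\cK-K(z)]^{-1}AR_0(z)f=0$, hence $AR_0(z)f=0$, and therefore $R_0(z)f=0$ and $f=0$; dense range is obtained symmetrically using $R(\bar z)^*$. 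These ingredients identify $R(z)=(H-zI_\cH)^{-1}$ for a uniquely determined densely defined closed operator $H$ in $\cH$, independent of $z$ on the admissible set.

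The identities \eqref{B.8}--\eqref{B.9} then follow by substituting the telescoping decompositions $[I_\cK-K(z)]^{-1}=I_\cK+K(z)[I_\cK-K(z)]^{-1}=I_\cK+[I_\cK-K(z)]^{-1}K(z)$ into \eqref{B.5} and regrouping. The boundedness \eqref{B.7} is then immediate: composing $A$ with \eqref{B.8} yields $AR(z)=[I_\cK-K(z)]^{-1}AR_0(z)\in\cB(\cH,\cK)$, and analogously for $BR(z)^*$. For the extension property, take $f\in\dom(H_0)\cap\dom(B^*A)$ and compute $R(z)[(H_0-zI_\cH)+B^*A]f$ directly: the $H_0-zI_\cH$ term contributes $f-\overline{R_0(z)B^*}[I_\cK-K(z)]^{-1}Af$, while the $B^*A$ term, after using $R_0(z)B^*Af=\overline{R_0(z)B^*}Af$ (legitimate since $Af\in\dom(B^*)$) together with $AR_0(z)B^*Af=-K(z)Af$ and the identity $I_\cK+[I_\cK-K(z)]^{-1}K(z)=[I_\cK-K(z)]^{-1}$, produces the opposite term $+\overline{R_0(z)B^*}[I_\cK-K(z)]^{-1}Af$. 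The two cancel, $R(z)[(H_0+B^*A-zI_\cH)]f=f$, and $Hf=(H_0+B^*A)f$ follows.

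Finally, when $H_0=H_0^*$ and \eqref{B.11} holds, polarizing the symmetry condition and inserting resolvent factors yields $K(\bar z)=K(z)^*$ and $(\overline{R_0(z)B^*})^*=AR_0(\bar z)$ on their natural dense domains; substitution into \eqref{B.5} gives $R(z)^*=R(\bar z)$, which combined with the resolvent identity forces $H=H^*$. The main obstacle throughout is the careful book-keeping of closures and of the unbounded operators $A,B,B^*$ when they are commuted past the bounded factors in \eqref{B.5}, most delicately in the verification of the extension identity on the possibly tiny intersection domain $\dom(H_0)\cap\dom(B^*A)$, where the membership $Af\in\dom(B^*)$ must be invoked to legitimate each rearrangement.
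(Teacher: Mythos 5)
The paper does not prove Theorem \ref{tB.2}; it states it with attribution to \cite{GLMZ05} and, in the self-adjoint case, to Kato \cite{Ka66}, so the only relevant question is whether your argument stands on its own. Your first four stages --- boundedness of $R(z)$ via the closed graph theorem and $\ol{R_0(z)B^*}=(BR_0(z)^*)^*$; the first resolvent identity and identification of $R(\cdot)$ as a pseudo-resolvent with trivial kernel and dense range; the reformulations \eqref{B.7}--\eqref{B.9} via $\ol{R(z)B^*}=\ol{R_0(z)B^*}[I_\cK-K(z)]^{-1}$, $AR(z)=[I_\cK-K(z)]^{-1}AR_0(z)$; and the direct verification of the extension property --- are all correct and follow the standard Kato-type route. (Two small compressions worth flagging: from $AR_0(z)f=0$ alone one cannot conclude $R_0(z)f=0$; what one actually uses is that $g:=[I_\cK-K(z)]^{-1}AR_0(z)f=0$ and hence $R_0(z)f=R(z)f+\ol{R_0(z)B^*}g=0$. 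And the identity $A\,\ol{R_0(z)B^*}=-K(z)$ on all of $\cK$, used repeatedly, requires invoking the closedness of $A$.)

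The self-adjointness stage, however, contains a genuine error. Neither claimed identity is true: in fact $(\ol{R_0(z)B^*})^*=BR_0(\bar z)$, not $AR_0(\bar z)$, and $K(z)^*=-\ol{BR_0(\bar z)A^*}$, which differs from $K(\bar z)=-\ol{AR_0(\bar z)B^*}$ by a swap of $A$ and $B$. The equality $K(\bar z)=K(z)^*$ would amount to $\ol{AR_0(\bar z)B^*}=\ol{BR_0(\bar z)A^*}$, which is not a consequence of $H_0=H_0^*$ and \eqref{B.11} (the latter is a symmetry of the form $(Af,Bg)_\cK$ over $\dom(A)\cap\dom(B)\subset\cH$, not a relation between $A^*$ and $B^*$ acting on $\cK$). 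Substituting the correct adjoints into \eqref{B.5} yields $R(z)^*=R_0(\bar z)-\ol{R_0(\bar z)A^*}\,[I_\cK-K(z)^*]^{-1}BR_0(\bar z)$, an expression in which $A$ and $B$ have traded places relative to $R(\bar z)=R_0(\bar z)-\ol{R_0(\bar z)B^*}\,[I_\cK-K(\bar z)]^{-1}AR_0(\bar z)$, so the two are not visibly equal. What hypothesis \eqref{B.11} actually supplies, upon taking $f,g\in\dom(H_0)\subseteq\dom(A)\cap\dom(B)$, is the single operator identity $\ol{R_0(\bar z)B^*}\,AR_0(\bar z)=\ol{R_0(\bar z)A^*}\,BR_0(\bar z)$, i.e.\ equality of the first-order Born terms; one must then propagate this equality through the entire Neumann series (equivalently, show that the perturbation term is invariant under the swap $A\leftrightarrow B$), using the bounded-extension identities $A\,\ol{R_0(\bar z)B^*}=-K(\bar z)$ and $B\,\ol{R_0(\bar z)A^*}=-K(z)^*$ together with closedness of $A$ and $B$. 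Your sketch omits this mechanism entirely; as written, the step fails.
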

%%%%%%%%%%%%%%%%%%%%%%%%%%%%%%%%%%%%%%

In the case where $H_0$ is self-adjoint, Theorem \ref{tB.2} is due to Kato
\cite{Ka66} in this abstract setting.

\medskip

The next result is an abstract version of the celebrated Birman--Schwinger
principle relating eigenvalues $\lambda_0$ of $H$ and the eigenvalue $1$
of $K(\lambda_0)$:

%%%%%%%%%%%%%%%%%%%%%%%%%%%%%%%%%%%%
\begin{theorem}  [\cite{GLMZ05}] \lb{tB.3}
Assume Hypothesis \ref{hB.1} and let $\lambda_0\in\rho(H_0)$.
Then,
\begin{equation}
Hf=\lambda_0 f, \quad 0\neq f\in\dom(H) \, \text{ implies } \,
K(\lambda_0)g=g
\end{equation}
where, for fixed $z_0\in\{\zeta\in\rho(H_0)\,|\,
1\in\rho(K(\zeta))\}$, $z_0\neq \lambda_0$,
\begin{equation}
0 \neq g = [I_{\cK}-K(z_0)]^{-1}AR_0(z_0)f =
(\lambda_0-z_0)^{-1}Af.
\end{equation}
Conversely,
\begin{equation}
K(\lambda_0)g=g, \quad 0\neq g\in\cK \, \text{ implies } \,
Hf=\lambda_0 f,
\end{equation}
where
\begin{equation}
0\neq f=-\ol{R_0(\lambda_0)B^*}g\in\dom(H).
\end{equation}
Moreover,
\begin{equation}
\dim(\ker(H-\lambda_0I_{\cH}))
=\dim(\ker(I_{\cK}-K(\lambda_0)))<\infty.
\end{equation}
In particular, let $z\in\rho(H_0)$, then
\begin{equation}
\text{$z\in\rho(H)$ if and only if\; $1\in\rho(K(z))$.}
\end{equation}
\end{theorem}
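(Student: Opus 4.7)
My plan is to use the explicit resolvent formula \eqref{B.5} together with the first resolvent identity for $R_0$ to convert each eigenvalue condition into the other. Two preliminary identities will do most of the work. First, since $A$ is closed while $K(z)=-\overline{AR_0(z)B^*}\in\cB(\cK)$, approximating $\eta\in\cK$ by a sequence $\eta_n\in\dom(B^*)$ with $\eta_n\to\eta$ and invoking closedness of $A$ shows $\overline{R_0(z)B^*}\eta\in\dom(A)$ with
\begin{equation}
A\,\overline{R_0(z)B^*}\eta=-K(z)\eta, \quad \eta\in\cK,\;z\in\rho(H_0). \lb{plan:key1}
\end{equation}
Second, applying $A$ to \eqref{B.5} and using \eqref{plan:key1} produces
\begin{equation}
AR(z)=(I_\cK-K(z))^{-1}AR_0(z),\quad z\in\{\zeta\in\rho(H_0)\,|\,1\in\rho(K(\zeta))\}. \lb{plan:ARz}
\end{equation}

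For the forward direction, starting from $Hf=\lambda_0 f$ with $f\neq 0$, the identity $R(z_0)f=(\lambda_0-z_0)^{-1}f$ combined with \eqref{plan:ARz} yields $AR(z_0)f=g$ where $g:=(\lambda_0-z_0)^{-1}Af$. Using the resolvent formula \eqref{B.9} together with the algebraic identity $(\lambda_0-z_0)R_0(z_0)-I_\cH=-(H_0-\lambda_0)R_0(z_0)$ produces $(H_0-\lambda_0)R_0(z_0)f=(z_0-\lambda_0)\overline{R_0(z_0)B^*}g$; applying $R_0(\lambda_0)$ on the left and simplifying via $R_0(\lambda_0)\overline{R_0(z_0)B^*}=(\lambda_0-z_0)^{-1}[\overline{R_0(\lambda_0)B^*}-\overline{R_0(z_0)B^*}]$ (the first resolvent identity applied to $B^*\eta_n$ with $\eta_n\to g$ and then closed by boundedness of $R_0(\lambda_0)$) yields
\begin{equation}
R_0(z_0)f=\overline{R_0(z_0)B^*}g-\overline{R_0(\lambda_0)B^*}g. \lb{plan:eq1}
\end{equation}
Applying $A$ to \eqref{plan:eq1} and invoking \eqref{plan:key1} at both $z=z_0$ and $z=\lambda_0$ gives $AR_0(z_0)f=-K(z_0)g+K(\lambda_0)g$; comparing with $AR_0(z_0)f=(I_\cK-K(z_0))g$ (which follows from \eqref{plan:ARz}) produces the desired $K(\lambda_0)g=g$. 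If $g$ were $0$, then \eqref{plan:eq1} would force $R_0(z_0)f=0$, whence $f=0$ by injectivity of $R_0(z_0)$, contradicting $f\neq 0$.

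For the converse, given $K(\lambda_0)g=g$ with $g\neq 0$ and $f:=-\overline{R_0(\lambda_0)B^*}g$, the closedness argument behind \eqref{plan:key1} at $z=\lambda_0$ yields $f\in\dom(A)$ with $Af=g$, hence $f\neq 0$. I will verify $(\lambda_0-z_0)R(z_0)f=f$ in three steps: (i)~using the first resolvent identity on $\overline{R_0(\lambda_0)B^*}g$ to obtain $(\lambda_0-z_0)R_0(z_0)f-f=\overline{R_0(z_0)B^*}g$; (ii)~applying $A$ via \eqref{plan:key1} to get $AR_0(z_0)f=(\lambda_0-z_0)^{-1}(I_\cK-K(z_0))g$; (iii)~substituting both into \eqref{B.5}, whereupon the computation collapses to $R(z_0)f=(\lambda_0-z_0)^{-1}f$, so $f\in\ran(R(z_0))=\dom(H)$ and $Hf=\lambda_0 f$. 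The dimension equality then follows because the map $F\colon f\mapsto g$ is injective (if $Af=0$ and $Hf=\lambda_0 f$, then \eqref{plan:eq1} with $g=0$ gives $R_0(z_0)f=0$, hence $f=0$), the map $C\colon g\mapsto f$ is injective (if $\overline{R_0(\lambda_0)B^*}g=0$ and $K(\lambda_0)g=g$, then $g=Af=0$), and compactness in Hypothesis \ref{hB.1}$(iv)$ renders $I_\cK-K(\lambda_0)$ Fredholm of index zero, giving $\dim(\ker(I_\cK-K(\lambda_0)))<\infty$.

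Finally, the resolvent equivalence is immediate: $1\in\rho(K(z))$ implies $z\in\rho(H)$ by Theorem \ref{tB.2}, while $z\in\rho(H)$ implies $1\in\rho(K(z))$ because compactness of $K(z)$ combined with the Fredholm alternative reduces the question to excluding $1$ as an eigenvalue of $K(z)$, and the converse direction of the Birman--Schwinger part proved above would then force $z$ to be an eigenvalue of $H$. The main technical obstacle is the careful handling of closures in \eqref{plan:key1}--\eqref{plan:eq1}; once the compatibility of $A$ with $\overline{R_0(z)B^*}$ recorded in \eqref{plan:key1} and the closed form of the first resolvent identity are pinned down, the remainder is essentially algebraic manipulation with the two resolvent formulas \eqref{B.5}, \eqref{B.9} and the $R_0$-resolvent identity.
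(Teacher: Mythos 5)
The paper gives no proof of Theorem \ref{tB.3}: it is attributed to \cite{GLMZ05} and the text only notes that in the self-adjoint case the result goes back to Konno and Kuroda \cite{KK66}, so there is no in-paper argument to compare against. Your reconstruction is correct. The one step you leave implicit is that $\overline{R_0(z)B^*}$ is itself a bounded operator: your approximation argument for \eqref{plan:key1} requires $R_0(z)B^*\eta_n\to\overline{R_0(z)B^*}\eta$, which needs boundedness of the closure, not just closability. This does follow from Hypothesis \ref{hB.1}\,$(i)$, since $\dom(B)\supseteq\dom(H_0^*)$ and the closed graph theorem give $B(H_0^*-\bar z I_\cH)^{-1}\in\cB(\cH,\cK)$, whence $\overline{R_0(z)B^*}=\bigl(B(H_0^*-\bar z I_\cH)^{-1}\bigr)^*\in\cB(\cK,\cH)$; it would be worth stating this explicitly, as it underpins every limiting passage in \eqref{plan:key1}, \eqref{plan:eq1}, and the resolvent-identity manipulations. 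Granted that, the derivation of \eqref{plan:ARz} from \eqref{B.5}, the forward direction via \eqref{B.9} and the first resolvent identity, the converse via steps (i)--(iii), the injectivity of the two maps $f\mapsto g$ and $g\mapsto f$ between the kernels, and the use of compactness of $K(\lambda_0)$ (Hypothesis \ref{hB.1}\,$(iv)$) for finiteness and for the Fredholm-alternative step in the resolvent equivalence are all sound.
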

%%%%%%%%%%%%%%%%%%%%%%%%%%%%%%%%%%%%%%

In the case where $H_0$ and $H$ are self-adjoint, Theorem \ref{tB.3}
is due to Konno and Kuroda \cite{KK66}.

\medskip

%%%%%%%%%%%%%%%%%%%%%
\noindent {\bf Acknowledgments.}
We are indebted to Yuri Latushkin, Konstantin A.\ Makarov, and Anna Skripka
for helpful discussions on this topic. We also thank Tonci Crmaric for pointing out
some references to us and we are grateful to Dorina Mitrea for a critical reading of the  
manuscript. 
%%%%%%%%%%%%%%%%%%

%%%%%%%%%%%%%%%%%%%%%%%%%%%%%%%%%%%%%%
%%%%%%%%%%%%%%%%%%%%%%%%%%%%%%%%%%%%%%


\begin{thebibliography}{99}
%%%%%%%%%%%%%%%%%%%%%%%%%%%%%%%%%%%%%%
%%%%%%%%%%%%%%%%%%%%%%%%%%%%%%%%%%%%%%
%
\bi{ABMN05} S.\ Albeverio, J.\ F.\ Brasche, M.\ M.\ Malamud, and H.\ Neidhardt, {\it Inverse spectral theory for symmetric operators with several gaps: scalar-type Weyl functions}, J. Funct. Anal. {\bf 228}, 144--188 (2005).
%
\bi{AP04} W.\ O.\ Amrein and D.\ B.\ Pearson, {\it $M$ operators: a generalisation of Weyl--Titchmarsh theory}, J. Comp. Appl. Math. {\bf 171},
1--26 (2004).
%
\bi{Au04} G.\ Auchmuty, {\it Steklov eigenproblems and the representation of solutions of elliptic boundary value problems}, Num. Funct. Anal. Optimization {\bf 25}, 321--348 (2004).
%
\bi{Au06} G.\ Auchmuty, {\it Spectral charcterization of the trace spaces $H^s(\partial\Omega)$}, SIAM J. Math. Anal., to appear.
%
\bi{BL06} J.\ Behrndt and M.\ Langer, {\it Boundary value problems for partial differential operators on bounded domains}, J. Funct. Anal. {\bf 243}, 536--565 (2007).
%
\bi{BMN06} J.\ Behrndt, M.\ M.\ Malamud, and H.\ Neidhardt, {\it Scattering
matrices and Weyl functions}, preprint, 2006.
%
\bi{BC04} S.\ Blanes and F.\ Casas, {\it On the convergence and optimization of the Baker--Campbell--Hausdorff formula}, Lin. Algebra Appl. {\bf 378}, 135--158 (2004).
%
\bi{Bo89} A.\ Bose, {\it Dynkin's method of computing the terms of the
Baker--Campbell--Hausdorff series}, J. Math. Phys. {\bf 30}, 2035--2037 (1989).
%
\bi{BMN00} J.\ F.\ Brasche, M.\ M.\ Malamud, and H.\ Neidhardt, {\it Weyl functions and singular continuous spectra of self-adjoint extensions}, in
{\it Stochastic Processes, Physics and Geometry: New
Interplays. II. A Volume in Honor of Sergio Albeverio},
F.\ Gesztesy, H.\ Holden, J.\ Jost, S.\ Paycha, M.\ R\"ockner, and
S.\ Scarlatti (eds.), Canadian Mathematical Society Conference
Proceedings, Vol.\ 29, Amer.\ Math.\ Soc., Providence, RI, 2000, pp.\ 75--84.
%
\bi{BMN02} J.\ F.\ Brasche, M.\ M.\ Malamud, and H.\ Neidhardt, {\it Weyl function and spectral properties of self-adjoint extensions}, Integral Eqs. Operator Theory {\bf 43}, 264--289 (2002).
%
\bi{BM04} B.\ M.\ Brown and M.\ Marletta, {\it Spectral inclusion and spectral exactness for PDE's on exterior domains}, IMA J. Numer. Anal. {\bf 24}, 21--43 (2004).
%
\bi{BFK91} D.\ Burghelea, L.\ Friedlander, and T. Kappeler, {\it On the determinant of elliptic differential and finite difference operators in vector bundles over $S^1$}, Commun. Math. Phys. {\bf 138}, 1--18 (1991). Erratum: Commun. Math. Phys. {\bf 150}, 431 (1992).
%
\bi{BFK92} D.\ Burghelea, L.\ Friedlander, and T. Kappeler, {\it Meyer--Vietoris type formula for determinants of elliptic differential operators}, J. Funct. Anal.
{\bf 107}, 34--65 (1992).
%
\bi{BFK93} D.\ Burghelea, L.\ Friedlander, and T. Kappeler, {\it Regularized determinants for pseudodifferential operators in vector bundles over $S^1$},
Integral Eqs. Operator Theory {\bf 16}, 496--513 (1993).
%
\bi{BFK95} D.\ Burghelea, L.\ Friedlander, and T. Kappeler, {\it On the determinant of elliptic boundary value problems on a line segment}, Proc.
Amer. Math. Soc. {\bf 123}, 3027--3038 (1995).
%
\bi{Ca02} G.\ Carron, {\it D\'eterminant relatif et la fonction xi},
Amer. J. Math. {\bf 124}, 307--352 (2002).
%
\bi{Ch90} S.\ Chanillo, {\it A problem in electrical prospection and a $n$-dimensional Borg--Levinson theorem}, Proc. Amer. Math. Soc. {\bf 108}, 761--767 (1990).
%
\bi{Ch84} M.\ Cheney, {\it Two-dimensional scattering: The
number of bound states from scattering data}, J. Math. Phys.
{\bf 25}, 1449--1455 (1984).
%
\bi{De78} P.\ A.\ Deift, {\it Applications of a commutation formula}, Duke Math. J.
{\bf 45}, 267--310 (1978).
%
\bi{DHP03} R.\ Denk, M.\ Hieber, and J.\ Pr\"{u}ss, {\it
 $\mathcal{R}$-boundedness, Fourier multipliers and problems of elliptic
and parabolic type.}  Mem. Amer. Math. Soc. {\bf 166}, No.\ 788, (2003).
%
\bi{DM91} V.\ A.\ Derkach and M.\ M.\ Malamud, {\it Generalized
resolvents and the boundary value problems for Hermitian operators
with gaps}, J. Funct. Anal. {\bf 95}, 1--95 (1991).
%
\bi{DM95} V.\ A.\ Derkach and M.\ M.\ Malamud, {\it The extension
theory of Hermitian operators and the moment problem}, J. Math. Sci.
{\bf 73}, 141--242 (1995).
%
\bi{DD78} T.\ Dreyfus and H.\ Dym, {\it Product formulas for the eigenvalues of a
class of boundary value problems}, Duke Math. J. {\bf 45}, 15--37 (1978).
%
\bi{DS88} N.~Dunford and J.~T.~Schwartz, {\it Linear Operators Part II:
Spectral Theory}, Interscience, New York, 1988.
%
\bi{EE89} D.\ E.\ Edmunds and W.\ D.\ Evans, {\it Spectral
Theory and Differential Operators}, Clarendon Press, Oxford, 1989.
%
\bi{ET89} D.\ E.\ Edmunds and H.\ Triebel, {\it Entropy numbers and approximation numbers in function spaces}, Proc. London Math. Soc. (3) {\bf 58}, 137--152 (1989).
%
\bi{Fo87} R.\ Forman, {\it Functional determinants and geometry}, Invent. Math. {\bf 88}, 447--493 (1987).
%
\bi{Fr05} L.\ Friedlander, {\it Determinant of the Schr\"odinger operator on a metric graph}, preprint, 2005.
%
\bi{GL55} I.\ M.~Gel'fand and B.\ M.~Levitan, {\it On the determination
of a differential equation from its special function}, Izv. Akad. Nauk 
SSR. Ser.~Mat. {\bf 15}, 309--360 (1951) (Russian); Engl.\ transl.\ in
Amer. Math. Soc. Transl. Ser.\ 2, {\bf 1}, 253--304 (1955).
%
\bi{Ge07} F.\ Gesztesy, {\it Inverse spectral theory as influenced by Barry
Simon}, in {\it Spectral Theory and Mathematical Physics: A Festschrift in Honor of Barry Simon's 60th Birthday. Ergodic Schr\"odinger Operators, Singular Spectrum, Orthogonal Polynomials, and Inverse Spectral Theory}, F.\ Gesztesy, P.\ Deift, C.\ Galvez, P.\ Perry, and W.\ Schlag (eds.), Proceedings of Symposia in Pure Mathematics, Vol.\ 76/2, 
Amer. Math. Soc., Providence, RI, 2007, pp.\ 741--820.
%
\bi{GKMT01} F.\ Gesztesy, N.\ J.\ Kalton, K.\ A.\ Makarov,
and E.\ Tsekanovskii, {\it Some Applications of
Operator-Valued Herglotz Functions}, in {\it  Operator Theory,
System Theory and Related Topics. The Moshe Liv{\v s}ic
Anniversary Volume}, D.\ Alpay and V.\ Vinnikov (eds.), Operator
Theory: Advances and Applications, Vol.\ 123, Birkh\"auser,
Basel, 2001, p.\ 271--321.
%
\bibitem{GLMZ05} F.\,Gesztesy, Y.\,Latushkin, M.\,Mitrea, and M. Zinchenko,
{\it Nonselfadjoint operators, infinite determinants, and some
applications}, Russ. J. Math. Phys. {\bf 12}, 443--471 (2005).
%
\bi{GM03} F.\ Gesztesy and K.\ A.\ Makarov, {\it
$($Modified\,$)$ Fredholm Determinants for Operators with
Matrix-Valued Semi-Separable Integral Kernels Revisited},
Integral Eqs. Operator Theory {\bf 47}, 457--497
(2003). (See also Erratum {\bf 48}, 425--426 (2004) and the
corrected electronic only version in {\bf 48}, 561--602
(2004).)
%
\bi{GLM06} F.\ Gesztesy. Y.\ Latushkin, and K.\ A.\ Makarov,
{\it Evans Functions, Jost Functions, and Fredholm
Determinants}, preprint, 2005.
%
\bi{GS00} F.\ Gesztesy and B.\ Simon, {\it A new approach to inverse
spectral theory, II.  General real potentials and the connection to the
spectral measure}, Ann. Math. {\bf 152}, 593--643 (2000).
%
\bi{GT97} F.\ Gesztesy and E.\ Tsekanovskii, {\it On matrix-valued
Herglotz functions}, Math. Nachr. {\bf 218}, 61--138 (2000).
%
\bi{GGK00} I.\ Gohberg, S.\ Goldberg, and N.\ Krupnik, {\it
Traces and Determinants for Linear Operators}, Operator Theory:
Advances and Applications, Vol.\ 116, Birkh\"auser, Basel, 2000.
%
\bi{GK69} I.\ C.\ Gohberg and M.\ G.\ Krein, {\it Introduction to
the Theory of Linear Nonselfadjoint Operators}, Translations of
Mathematical Monographs, Vol.\ 18, Amer. Math. Soc., Providence,
RI, 1969.
%
\bi{GG91} V.\ I.\ Gorbachuk and M.\ L.\ Gorbachuk, {\it Boundary Value Problems for Operator Differential Equations}, Kluwer, Dordrecht, 1991.
%
\bibitem{Gr85} P.\,Grisvard, {\it Elliptic Problems in Nonsmooth Domains}, Pitman, Boston, 1985.
%
\bibitem{GI75} P.\,Grisvard, G.\,Iooss, {\it Probl\`emes aux limites
unilat\'eraux dans des domaines non r\'eguliers}, Publications
des S\'eminaires de Math\'ematiques, Universit\'e de Rennes {\bf 9},
1--26 (1975).
%
\bi{GG07} C.\ Guillarmou and L.\ Guillop\'e, {\it The determinant of the Dirichlet-to-Neumann map for surfaces with boundary}, preprint, 2007. 
%
\bi{HL01} P.\ D.\ Hislop and C.\ V.\ Lutzer, {\it Spectral asymptotics of the
Dirichlet-to-Neumann map on multiply connected domains in $\bbR^d$}, Inverse Probl. {\bf 17},
1717--1741 (2001).
%
\bi{Ho70} J.\ S.\ Howland, {\it On the Weinstein--Aronszajn
formula}, Arch. Rat. Mech. Anal. {\bf 39}, 323--339 (1970).
%
\bibitem{JK81} D.\ S.\ Jerison and C.\ E.\ Kenig, {\it The Neumann problem on
Lipschitz domains}, Bull. Amer. Math. Soc. (N.S.) {\bf 4},
203--207 (1981).
%
\bi{JK95} D.\ Jerison and C.\ Kenig, {\it The inhomogeneous
Dirichlet problem in Lipschitz domains}, J. Funct. Anal. {\bf
130}, 161--219 (1995).
%
\bi{JP51} R.\ Jost and A.\ Pais, {\it On the scattering of a
particle by a static potential}, Phys. Rev. {\bf 82}, 840--851
(1951).
%
\bi{Ka64} J.\ Kadlec, {\it The regularity of the solution of
the  Poisson problem in a domain whose boundary is similar to that
of a convex domain}, Czechoslovak Math. J. {\bf 14}, 386--393 (1964).
%
\bi{Ka66} T.\ Kato, {\it Wave operators and similarity for some
non-selfadjoint operators}, Math. Ann. {\bf 162}, 258--279
(1966).
%
\bi{Ka80} T.\ Kato, {\it Perturbation Theory for Linear
Operators}, corr. printing of the 2nd ed., Springer, Berlin,
1980.
%
\bi{KK66} R.\ Konno and S.\ T.\ Kuroda, {\it On the finiteness
of perturbed eigenvalues}, J. Fac. Sci., Univ. Tokyo, Sec. I,
{\bf 13}, 55--63 (1966).
%
\bi{Ko04} S.\ Kotani, {\it KdV-flow and Floquet exponent}, preprint, Osaka University, 2004.
%
\bi{KLW05} Y.\ Kurylev, M.\ Lassas, and R.\ Weder, {\it Multidimensional
Borg--Levinson theorem}, Inverse Probl. {\bf 21}, 1685--1696 (2005).
%
\bi{LS77} S.\ Levitt and U.\ Smilansky, {\it A theorem on infinite products of eigenvalues of Sturm--Liouville type operators}, Proc. Amer. Math. Soc. {\bf 65}, 299--302 (1977).
%
\bi{MM06} M.\ M.\ Malamud and V.\ I.\ Mogilevskii, {\it Krein type formula for canonical resolvents of dual pairs of linear relations}, Methods  Funct. Anal. Topology,
{\bf 8}, No.\ 4,  72--100  (2002).
%
\bi{Ma04} M.\ Marletta, {\it Eigenvalue problems on exterior domains and Dirichlet to Neumann maps}, J. Comp. Appl. Math. {\bf 171},
367--391 (2004).
%
\bi{Mc00} W.\ McLean, {\it Strongly Elliptic Systems and
Boundary Integral Equations}, Cambridge University Press,
Cambridge, 2000.
%
\bi{MPP07} A.\ B.\ Mikhailova, B.\ S.\ Pavlov, and L.\ V.\ Prokhorov, {\it Intermediate
Hamiltonian via Glazman's splitting and analytic perturbation for meromorphic matrix-functions}, Math. Nachr., to appear.
%
\bi{MMT01} D.\ Mitrea, M.\ Mitrea, and M.\ Taylor, {\it Layer
Potentials, the Hodge Laplacian, and Global Boundary Problems in
Nonsmooth Riemannian Manifolds}, Mem. Amer. Math. Soc. {\bf
150}, No.\ 713, (2001).
%
\bi{MM07} I.\,Mitrea and M.\,Mitrea, {\it Multiple Layer Potentials 
for Higher Order Elliptic Boundary Value Problems}, preprint, 2007. 
%
\bibitem{Mi96} M.\ Mitrea, {\it Boundary value problems and Hardy spaces
associated to the Helmholtz equation in Lipschitz domains}, J.
Math. Anal. Appl. {\bf 202}, 819--842 (1996).
%
\bibitem{Mi01} M.\ Mitrea, {\it Dirichlet integrals and Gaffney--Friedrichs
inequalities in convex domain}, Forum Math. {\bf 13}, 531--567 (2001).
%
\bi{MT00} M.\ Mitrea and M.\ Taylor, {\it Potential theory on
Lipschitz domains in Riemannian manifolds: Sobolev-Besov space
results and the Poisson problem}, J. Funct. Anal. {\bf 176},
1--79 (2000).
%
\bi{Mu98} W.\ M\"uller, {\it Relative zeta functions, relative determinants and
scattering theory}, Commun. Math. Phys. {\bf 192}, 309--347 (1998).
%
\bi{NSU88} A.\ Nachman, John Sylvester, and G.\ Uhlmann, {\it An
$n$-dimensional Borg--Levinson theorem}, Commun. Math. Phys. {\bf 115},
595--605 (1988).
%
\bi{Na01} S.\ Nakamura, {\it A remark on the Dirichlet--Neumann
decoupling and the integrated density of states}, J. Funct.
Anal. {\bf 179}, 136--152 (2001).
%
\bi{Ne72} R.\ G.\ Newton, {\it Relation between the
three-dimensional Fredholm determinant and the Jost function},
J. Math. Phys. {\bf 13}, 880--883 (1972).
%
\bi{Ne80} R.\ G.\ Newton, {\it Inverse scattering. I. One
dimension}, J. Math. Phys. {\bf 21}, 493--505 (1980).
%
\bi{Ne02} R.\ G.\ Newton, {\it Scattering Theory of Waves and
Particles}, 2nd ed., Dover, New York, 2002.
%
\bi{Ok95} K.\ Okikiolu, {\it The Campbell--Hausdorff theorem for elliptic operators and a related trace formula}, Duke Math. J. {\bf 79}, 687--722 (1995).
%
\bi{Ok95a} K.\ Okikiolu, {\it The multiplicative anomaly for determinants of elliptic operators}, Duke Math. J. {\bf 79}, 722--7750 (1995).
%
\bi{Ot91} J.\ A.\ Oteo, {\it The Baker--Campbell--Hausdorff formula and nested
commutator identities}, J. Math. Phys. {\bf 32}, 419--424 (1991).
%
\bi{PS02} L.\ P\"av\"arinta and V.\ Serov, {\it An $n$-dimensional Borg--Levinson
theorem for singular potentials}, Adv. Appl. Math. {\bf 29}, 509--520 (2002).
%
\bi{PW05} J.\ Park and K.\ P.\ Wojciechowski, {\it Adiabatic decomposition of the $\zeta$-determinant and Dirichlet to Neumann operator}, J. Geom. Phys.
{\bf 55}, 241--266 (2005).
%
\bi{PW05a} J.\ Park and K.\ P.\ Wojciechowski, {\it Agranovich--Dynin formula for the
$\zeta$-determinants of the Neumann and Dirichlet problems}, in {\it Spectral Geometry of Manifolds  with Boundary and Decomposition of Manifolds}, B.\ Boo\ss--Bavnbek, G.\ Grubb, and K.\ P.\ Wojciechowski (eds.), Contemporary Math. {\bf 366}, 109--121 (2005).
%
\bi{Pa87} B.\ Pavlov, {\it The theory of extensions and explicitly-solvable models},
Russ. Math. Surv. {\bf 42:6}, 127--168 (1987).
%
\bi{Pa02} B.\ Pavlov, {\it $S$-matrix and Dirichlet-to-Neumann operators},
Ch.\ 6.1.6 in {\it Scattering: Scattering and Inverse Scattering in Pure and
Applied Science}, Vol.\ 2, R.\ Pike and P.\ Sabatier (eds.), Academic Press, San Diego, 2002, pp.\ 1678--1688.
%
\bi{RS75} M.\ Reed and B.\ Simon, {\em Methods of Modern
Mathematical Physics. II: Fourier Analysis, Self-Adjointness},
Academic Press, New York, 1975.
%
\bi{RS79} M.\ Reed and B.\ Simon, {\em Methods of Modern
Mathematical Physics. III: Scattering Theory,} Academic Press,
New York, 1979.
%
\bi{RS78} M.\ Reed and B.\ Simon, {\em Methods of Modern
Mathematical Physics. IV: Analysis of Operators,} Academic
Press, New York, 1978.
%
\bi{Re03} C.\ Remling, {\it Inverse spectral theory for one-dimensional
Schr\"odinger operators: The $A$ function}, Math. Z. {\bf 245}, 597--617
(2003).
%
\bi{Ry99} V.\ S.\ Rychkov, {\it On restrictions and extensions
of the Besov and Triebel--Lizorkin spaces with respect to
Lipschitz domains}, J. London Math. Soc. (2) {\bf 60}, 237--257
(1999).
%
\bi{Sa05} M.\ Salo, {\it Semiclassical pseudodifferential calculus and the
reconstruction of a magnetic field}, preprint, 2005.
%
\bi{Si71} B.\ Simon, {\it Quantum Mechanics for Hamiltonians
Defined as Quadratic Forms}, Princeton University Press,
Princeton, NJ, 1971.
%
\bi{Si77} B.\ Simon, {\it Notes on infinite determinants of
Hilbert space operators}, Adv. Math. {\bf 24}, 244--273 (1977).
%
\bi{Si99} B.~Simon, {\it A new approach to inverse spectral theory, I.
Fundamental formalism}, Ann. Math. {\bf 150}, 1029--1057 (1999).
%
\bi{Si00} B.\ Simon, {\it Resonances in one dimension and
Fredholm determinants}, J. Funct. Anal. {\bf 178}, 396--420
(2000).
%
\bi{Si05} B.\ Simon, {\it Trace Ideals and Their Applications},
Mathematical Surveys and Monographs, Vol.\ 120, 2nd ed., Amer. Math. Soc., Providence, RI, 2005.
%
\bi{St70} E.\ M.\ Stein, {\it Singular Integrals and
Differentiability Properties of Functions}, Princeton
Mathematical Series, No.\ 30, Princeton University Press,
Princeton, NJ, 1970.
%
\bi{Su77} M.\ Suzuki, {\it On the convergence of exponential operators -- the
Zassenhaus formula, BCH formula and systematic approximations}, Commun.
Math. Phys. {\bf 57}, 193--200 (1977).
%
\bi{SU86} J.\ Sylvester and G.\ Uhlmann, {\it A uniqueness theorem for an inverse
boundary value problem in electrical prospection}, Commun. Pure Appl. Math.
{\bf 39}, 91--112 (1986).
%
\bi{SU87} J.\ Sylvester and G.\ Uhlmann, {\it A global uniqueness theorem for an inverse
boundary value problem}, Ann. Math. {\bf 125}, 153--169 (1987).
%
\bi{Ta65} G.\ Talenti, {\it Sopra una classe di
equazioni  ellittiche a coefficienti misurabili}, Ann. Mat.
Pura Appl. {\bf 69}, 285--304 (1965).
%
\bi{Ta91} M.\ E.\ Taylor, {\it Pseudodifferential Operators and
Nonlinear PDE}, Progress in Mathematics Vol.\ 100, Birkh\"auser,
Boston, MA, 1991.
%
\bi{Ta96} M.\ E.\ Taylor, {\it Partial Differential Equations,
Vol.\,II}, Springer, New York, 1996.
%
\bi{Tr78} H.\ Triebel, {\em Interpolation Theory, Function
Spaces, Differential Operators}, North-Holland, Amsterdam, 1978.
%
\bi{Tr83} H.\ Triebel, {\it Theory of Function Spaces}, Monographs in Mathematics,
Vol.\ 78, Birkh\"auser, Basel, 1983.
%
\bi{Tr02} H.\ Triebel, {\it Function spaces in Lipschitz domains
and on Lipschitz manifolds. Characteristic functions as
pointwise multipliers}, Rev. Mat. Complut. {\bf 15}, 475--524
(2002).
%
\bi{Ve84} G.\ Verchota, {\it Layer potentials and regularity for
the Dirichlet problem for Laplace's equation in Lipschitz
domains}, J. Funct. Anal.  {\bf 59}, 572--611 (1984).
%
\bi{Wl87} J.\ Wloka, {\it Partial Differential Equations},
Cambridge University Press, Cambridge, 1987.
%
\bi{Ya92} D.\ R.\ Yafaev, {\it Mathematical Scattering Theory},
Transl. Math. Monographs, Vol.\ 105, Amer. Math. Soc.,
Providence, RI, 1992.
%
\bi{Ya07} D.\ R.\ Yafaev, {\it Perturbation determinants, the spectral shift function, 
trace identities, and all that}, preprint, 2007.
%
\end{thebibliography}
\end{document}